\documentclass[twoside,11pt]{article}

\author{
Jaap Eldering\footnote{No academic affiliation (\texttt{jaap@jaapeldering.nl})}
\qquad
   Matthew Kvalheim\footnote{EECS Dept., University of Michigan, Ann Arbor, MI, USA (\texttt{kvalheim{@}umich.edu})}\qquad
  Shai Revzen\footnote{Depts.\ of EECS \& EEB, University of Michigan, Ann Arbor, MI, USA (\texttt{shrevzen{@}umich.edu})}}
\title{Global linearization and fiber bundle structure  of invariant manifolds}

\usepackage[utf8x]{inputenc}
\usepackage[T1]{fontenc}
\usepackage{lmodern}
\usepackage{amsmath}
\usepackage{amssymb}
\usepackage{amsthm}
\usepackage{stmaryrd} 
\usepackage{mathtools}
\usepackage{tikz-cd}
\usepackage{subfig}
\usepackage{lineno}
\renewcommand\linelabel[1]{}

\newcommand{\concept}[1]{\textit{#1}}

\newcommand{\N}{\mathbb{N}}

\newcommand{\R}{\mathbb{R}}

\newcommand{\Wsl}{W^s_{\text{loc}}}
\newcommand{\Ws}{W^s}

\newcommand{\Wu}{W^u}
\newcommand{\Ps}{P^s}
\newcommand{\Psl}{P^s_{\text{loc}}}

\newcommand{\Pul}{P^u_{\text{loc}}}

\newcommand{\slot}{\,\cdot\,} 

\newcommand{\T}{\mathsf{T}}
\newcommand{\D}{\mathsf{D}}

\newcommand{\Ver}{\mathsf{V}}
\newcommand{\id}{\textnormal{id}}
\newcommand{\rank}{\textnormal{rank\,}}
\newcommand{\interior}{\textnormal{int}\,}

\newcommand{\Lip}[1]{\textnormal{Lip}(#1)}

\DeclarePairedDelimiter\norm{\lVert}{\rVert}
\DeclarePairedDelimiter\minnorm{\llfloor}{\rrfloor}

\DeclareMathOperator\supp{supp}

\newtheorem{Lem}{Lemma}
\newtheorem{Th}{Theorem}
\newtheorem{Co}{Corollary}

\newtheorem{Prop}{Proposition}

\newcommand{\thistheoremname}{}
\newtheorem*{genericthm}{\thistheoremname}
\newenvironment{thmbis}[1]%
{\renewcommand{\thistheoremname}{Theorem~\ref{#1}$'$}%
  \begin{genericthm}}
{\end{genericthm}}

\theoremstyle{definition}
\newtheorem{Def}{Definition}
\newtheorem*{Def*}{Definition}

\newtheorem{Ex}{Example}[section]

\theoremstyle{remark}
\newtheorem{Rem}{Remark}

\usepackage{marvosym}
\usepackage[
    hypertexnames,%
    citecolor=blue,%
    colorlinks=true,%
    linkcolor=red%
]{hyperref}
\usepackage[all]{hypcap}

\setlength{\textwidth}{7in}
\setlength{\textheight}{0.85 \paperheight}
\setlength{\topmargin}{-0.6in}
\setlength{\oddsidemargin}{0 in}
\setlength{\evensidemargin}{0 in}
\setlength{\marginparsep}{-0.5 in}
\setlength{\marginparwidth}{1.9 in}

\begin{document}
\maketitle

\begin{abstract}
	We study global properties of the global (center-)stable manifold of a normally attracting invariant manifold (NAIM), the special case of a normally hyperbolic invariant manifold (NHIM) with empty unstable bundle.
	We restrict our attention to continuous-time dynamical systems, or flows.
	We show that the global stable foliation of a NAIM has the structure of a topological disk bundle, and that similar statements hold for inflowing NAIMs and for general compact NHIMs.
	Furthermore, the global stable foliation has a $C^k$ disk bundle structure if the local stable foliation is assumed $C^k$.
	We then show that the dynamics restricted to the stable manifold of a compact inflowing NAIM are globally topologically conjugate to the linearized transverse dynamics at the NAIM. 
	Moreover, we give conditions ensuring the existence of a global $C^k$ linearizing conjugacy.
	We also prove a $C^k$ global linearization result for inflowing NAIMs; we believe that even the local version of this result is new, and may be useful in applications to slow-fast systems.
	We illustrate the theory by giving applications to geometric singular perturbation theory in the case of an attracting critical manifold:
	we show that the domain of the Fenichel Normal Form can be extended to the entire global stable manifold, and under additional nonresonance assumptions we derive a smooth global linear normal form. 
\end{abstract}

\tableofcontents

\section{Introduction}\label{sec:intro}

Much of dynamical systems theory pertains to the behavior of points evolving under some smooth flow $\Phi:\R \times Q \to Q$ near an attracting invariant set.
One seeks techniques to better understand the behavior of these points.
Perhaps the most important method --- and the focus of this paper --- is the use of different coordinate systems near the attracting set, with respect to which the dynamics take a simpler form.
Particularly strong results in this direction hold in the case that the attracting invariant set is a normally attracting invariant manifold (NAIM).
This is a special case of a normally hyperbolic invariant manifold (NHIM), which is roughly defined as follows.
A manifold $M\subset Q$ is invariant if $\forall t\in \R:\, \Phi^t(M) = M$, and normal hyperbolicity means roughly that trajectories converge (or diverge) transversely to $M$ sufficiently faster than they converge (or diverge) within $M$ \cite{fenichel1971persistence,hirsch1977,normallyHypMan}.

Restricting, for now, to the case that $M$ has no boundary,
it is a well-known fundamental result \cite[ Thm~2, Thm~4.1]{fenichel1974asymptotic,hirsch1977} that a NAIM, as a special case of a NHIM, has an associated ``stable foliation'': this is a partition of the stability basin of $M$ into submanifolds $\Ws(m)$ for $m \in M$ (called ``leaves'') such that the flow $\Phi^t$ maps $\Ws(m)$ to $\Ws(\Phi^t(m))$, for any $t \geq 0$ and $m \in M$.
Furthermore, every $x$ in the stability basin of $M$ has a neighborhood $N_x$ such that $N_x$ is topologically a product of two Euclidean spaces; the first space indexes leaves and the second space locally parametrizes them (see Figure \ref{fig:foliation-vs-fiber-bundle}, left).
By using this foliation to define coordinates, one obtains a coordinate system in which the dynamics on $M$ are decoupled from the dynamics transverse to $M$.

It is also well known \cite{pugh1970linearization,hirsch1977,palis1977topological}--- and often used in the physical sciences, e.g., in the special case of the Hartman-Grobman theorem \cite{guckenheimer1983nonlinear} --- that for $M$ a NAIM (or NHIM), there exists an open neighborhood of $M$ in which the flow is topologically conjugate to its partial linearization.
For simplicity, we first describe this result in the special case that $M\subset Q$ has a neighborhood diffeomorphic to $M \times \R^{n}$ via a diffeomorphism which restricts to the identity on  $M\times \{0\}$.
In this case, we may write the flow $\Phi^t$ as $(\Phi_1^t, \Phi_2^t)$ on $M\times \R^{n}$.
Then this linearization result asserts the existence of a continuous change of coordinates $(p,v)\mapsto (q,w)$ on $M \times \R^n$ --- which restricts to the identity on $M\times \{0\}$ --- such that for any $(p,v) \in \R^n$, the trajectory $(\Phi_1^t(p,v), \Phi_2^t(p,v))$ is given by $(\Phi_1^t(q,0), \D\Phi_2^t(q,0)\cdot w)$ in the new coordinates.
In these new coordinates on $M\times \R^n$, not only are the dynamics on $M$ decoupled from the dynamics transverse to $M$, but the transverse component $w(t)\coloneqq \D\Phi_2^t(q,0)\cdot w$ of a trajectory is the solution of a nonautonomous linear differential equation.
Under additional spectral gap assumptions, this coordinate change can be taken to be continuously differentiable \cite{takens1971partiallyhyp,robinson1971differentiable,sell1983linearization,sell1983vector,sakamoto1990invariant}.
Needless to say, many key results in the sciences and engineering rely heavily on linear approximations of this form; this result shows that there exists a coordinate system in which such approximations become exact.

In this paper we prove several extensions of the familiar local results mentioned above, which we hope to be of both practical and theoretical interest.
Our results come in two flavors: (i) we show that the local topological and dynamical structure near the NAIM can be extended (often smoothly) to the entire stability basin, and (ii) we prove new local (and global) linearization results for NAIMs with nonempty boundary, subject to the requirement that the flow is ``inward'' at the boundary (inflowing NAIMs). 
The novelty of our results is that, to the best of our knowledge, all previously published work only established versions of our various results either (i) for hyperbolic attracting equilibria and periodic orbits rather than general NAIMs\footnote{It has recently come to our attention that in a soon-to-be published textbook \cite{mezic_book}, Igor Mezi\'{c} gives a very readable proof of a global linearization theorem for the case of arbitrary compact boundaryless NAIMs (see also Remark \ref{rem:mezic_remark}). In contrast, we also prove a more general result for arbitrary compact inflowing NAIMs, which may have nonempty boundary.}, (ii) for NAIMs without boundary, (iii) locally, or on proper subsets of the global stable manifold (in the case of a boundaryless NAIM, this is the stability basin), or some combination thereof.  
In contrast, our results apply to the entire global stable manifold, and they apply to the even broader class of systems consisting of inflowing NAIMs.
Thus our theorems can be used to prove results on compact domains of noncompact attracting manifolds, which can arise (for example) as intersections of a noncompact $M$ with a compact sub-level set of a function. 
Many noncompact hyperbolically attractive manifolds appear in the sciences and engineering, e.g., in the general context of slow-fast or multiple time scale systems \cite{kuehn2015multiple} studied using geometric singular perturbation theory (GSP) \cite{fenichel1979geometric,jones1995geometric,kaper1999systems}.
With the addition of a proper function having a strictly negative Lie derivative on one of its regular level sets, these give rise to compact inflowing NAIMs.
We remark that even if a compact domain of a noncompact attracting manifold is not inflowing, useful conclusions about the dynamics can sometimes still be obtained by making local modifications to the flow near the boundary of the domain in order to render it inflowing, and then applying theorems for inflowing NAIMs. 
We do precisely this in our applications to GSP in \S\ref{sec:applications}.

\subsection{Flavor of the key results}\label{sec:flavor_intro}
We begin by examining the (differential) topology of the global stable manifold, in a form depicted in Figure \ref{fig:foliation-vs-fiber-bundle} and formulated more precisely in Theorem \ref{th:fiber_bundle_theorem}.
We show that the entire global stable manifold of an inflowing NAIM has the structure of a ``disk bundle'': for $M$ of dimension $d$ in an $n$-dimensional ambient space, the global stable manifold admits a continuous ``projection'' onto $M$, and every point $m\in M$ has a neighborhood $U_m \subset M$ such that the preimage of $U_m$ through the projection is homeomorphic to the product of $U_m$ with $(n-d)$-dimensional Euclidean space $\R^{n-d}$ (``a disk'').
Furthermore, projection preimages (``fibers'') of points $m\in M$ are mapped via these homeomorphisms to sets of the form $\{m\} \times \R^{n-k}$.
We further extend this result by proving that, should the foliation near $M$ be $C^k$ smooth, then the entire global stable manifold has a structure of a $C^k$ disk bundle (for the definition, replace all homeomorphisms with $C^k$ diffeomorphisms above).
\linelabel{R1_1_a} Anticipating our global linearization results, one can think of this result as a ``weak'' or differential-topological version of global linearization of the global stable manifold: the global stable manifold always has the (differential) topological structure that one would naively expect from the (differential) topological structure of the local stable manifold.

This result has an application to geometric singular perturbation theory related to the so-called Fenichel Normal Form \cite{jones1994tracking, jones1995geometric,kaper1999systems,jones2009generalized}; for more details on the relevance of this normal form for slow-fast systems, see \S \ref{sec:applications}.
In the special case that the slow manifold is attracting, we show that our Theorem \ref{th:fiber_bundle_theorem} implies that the domain of the Fenichel normal form actually extends to the entire \emph{global} stable manifold of the slow manifold.

\linelabel{R1_1_b}We then proceed beyond ``weak'' linearization to the natural follow-up question, and show that in addition to the local topological structure, the local dynamical structure near an inflowing NAIM also extends to the entire global stable manifold: the flow on the global stable manifold is topologically conjugate to its linearization near $M$, and assuming some conditions on the relative rates of contraction of tangent vectors at $M$ evolving under the linearized flow, the global conjugacy of the flow to its linearization can be taken to be $C^k$.
In addition to this statement being a new global result, to the best of our knowledge, the local version of this linearization result is also new: linearization results previously appearing in the literature \cite{pugh1970linearization,robinson1971differentiable,palis1977topological,hirsch1977,sell1983linearization,sell1983vector,sakamoto1990invariant} have been stated for \emph{boundaryless} invariant manifolds.
This result provides a strong statement regarding how well dynamical systems can be modeled by their transverse linearizations and the dynamics on their attractor.
We give an application of this to singular perturbation theory, where the ``slow manifold'' attractors typically have boundary.
Thanks to our results for inflowing NAIMs we show that, under certain spectral conditions, singularly perturbed systems have a global normal form which is \emph{linear} in the fast variables.
This normal form is therefore stronger than the Fenichel Normal Form, which is generally (almost) fully nonlinear.

\subsection{Overview of main results}\label{sec:intro_overview}

Restated more technically, in this paper we prove some results for NHIMs which are of two types.

\begin{enumerate}
	\item Global versions of well-known local results for compact normally hyperbolic invariant manifolds (NHIMs), and compact, inflowing, normally attracting invariant manifolds (inflowing NAIMs).
	
	\item New (local and global) linearization results for inflowing NAIMs.
\end{enumerate}
\linelabel{R1_4_a}We restrict our attention to the case of flows on a finite-dimensional smooth  ambient manifold. 
We first investigate the structure of the global stable foliation of a compact normally hyperbolic invariant manifold $M\subset Q$ for a flow $\Phi^t$ on a smooth manifold $Q$.
We consider the following local-to-global result to be our first major contribution, depicted in Figure \ref{fig:foliation-vs-fiber-bundle}.
\begin{thmbis}{th:fiber_bundle_theorem}
	The global stable foliation of a NHIM is a topological disk bundle with fibers coinciding with the leaves of the foliation. 
    If additionally the local stable foliation and the NHIM are assumed $C^k$, then the global foliation is a $C^k$ disk bundle. 
    This bundle is isomorphic (as a disk bundle) to the stable vector bundle over the NHIM.
	A similar result holds for the global unstable foliation.
\end{thmbis}
In particular, if the $k$-center bunching condition (see Corollary \ref{co:fiber-bundle-NAIM-center-bunching} in \S \ref{sec:global_foliation_is_bundle}) is assumed, it follows that the global stable foliation is a $C^k$ disk bundle.
If both stable and unstable transverse directions are present at $M$, then $\Ws(M)\subset Q$ is generally only an immersed submanifold\footnote{Roughly speaking, this is because --- in the case that the unstable bundle is nonempty --- $\Ws(M)$ can accumulate on itself.
This is analogous to the ends of a curve approaching its midpoint to form a figure-eight.
The figure-eight is not an embedded submanifold, because the midpoint has no locally Euclidean neighborhood in the subspace topology, but the figure-eight is an immersed submanifold diffeomorphic to $\R$.}. Hence our result shows that the global stable manifold is a fiber bundle in its manifold topology, but \emph{not} in the subspace topology.
However if only stable transverse directions at $M$ are present, this technicality is avoided and $\Ws(M)\subset Q$ is a fiber bundle whose topology coincides with the subspace topology. 
(Embedded and immersed submanifolds are explained in more detail in \cite[Ch. 5]{lee2013smooth}.)

\begin{figure}
	\centering
	\def\svgwidth{.8\columnwidth}
	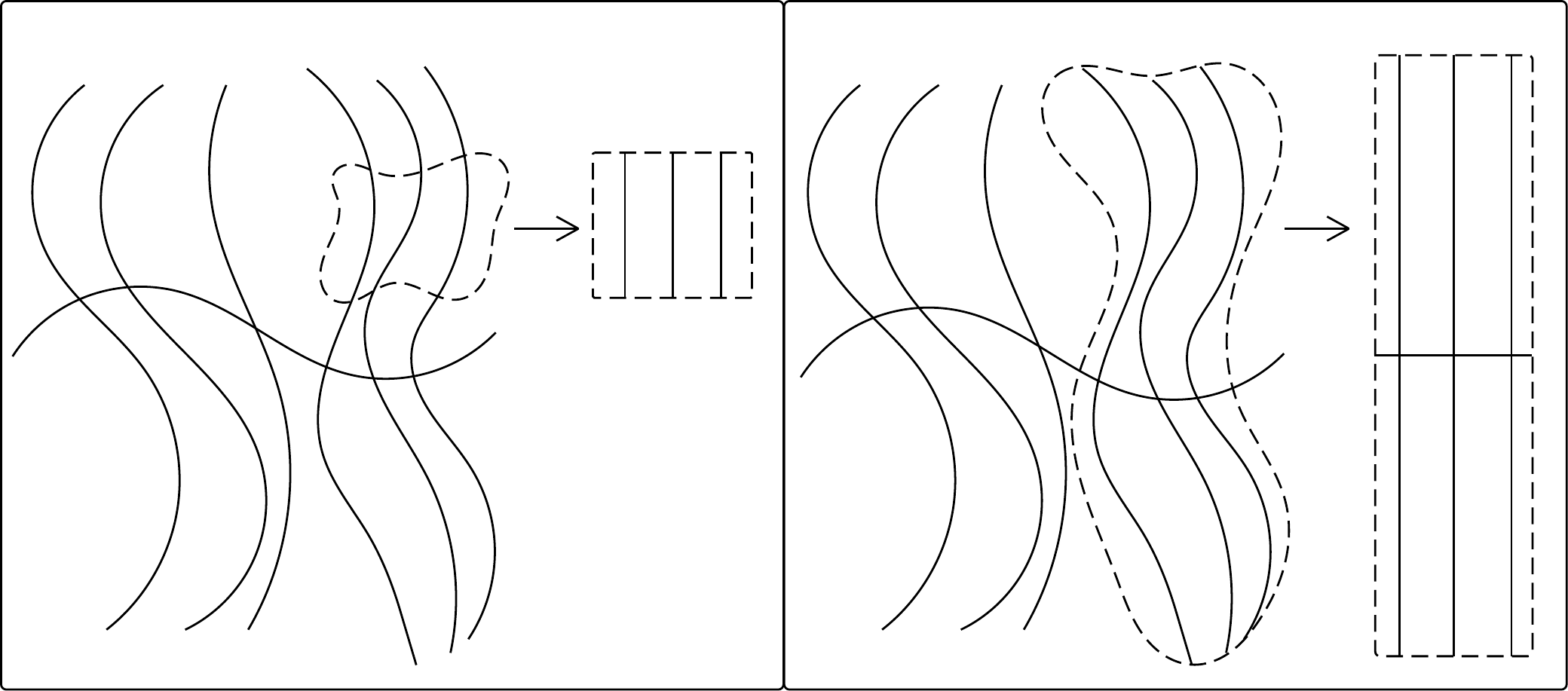
	\caption{The fact that the global stable foliation $\Ws(M)$ of a NHIM $M$ is a topological foliation implies that any point in $W^s(M)$ has a neighborhood on which the leaves of the foliation can be straightened via a homeomorphism, depicted on the left. Theorem \ref{th:fiber_bundle_theorem} shows that $\Ws(M)$ is actually a topological disk bundle, which means that $\Ws(M)$ admits \concept{local trivializations} whereby unions of entire fibers through a neighborhood of $M$ can be straightened via a homeomorphism, depicted on the right. Furthermore, if $\Wsl(M)$ is a $C^k$ foliation and $M\in C^k$, then $\Ws(M)$ is actually a $C^k$ fiber bundle, which means that these local trivializations can be chosen $C^k$. }\label{fig:foliation-vs-fiber-bundle}
\end{figure}

We also prove the corresponding fiber bundle result for the global stable foliation of a compact inflowing normally attracting invariant manifold (NAIM) $M$. 
I.e., $M$ is a NHIM with empty unstable bundle, but $M$ is allowed to have nonempty boundary, and inflowing means that $M$ is positively invariant and that the vector field points strictly inward at $\partial M$.
This is the result we actually prove, and indeed, the previously mentioned result follows from this one.

While our fiber bundle result might be expected by dynamicists, we could not find a direct proof in the literature. 
If the stable foliation happens to be smooth, then we will show that the map sending leaves to their basepoints on $M$ is a submersion with fibers diffeomorphic to disks, and it is shown in \cite[Cor.~31]{meigniez2002submersions} that this automatically implies that the stable manifold $\Ws(M)$ is a smooth disk bundle. 
On the other hand, our proof seems more elementary, directly shows that this bundle is isomorphic to $E^s$, and handles the general case in which the stable foliation is only continuous.

Next, we investigate global linearizations.
A classic result of NHIM theory is that the dynamics in a neighborhood of a NHIM are topologically conjugate to the dynamics linearized at the NHIM \cite{pugh1970linearization,hirsch1977,palis1977topological}, and there also exist conditions for $C^k$ linearization \cite{takens1971partiallyhyp,robinson1971differentiable,sakamoto1994smooth,sell1983linearization,sell1983vector,smoothInvariant}.
For the special case of a NAIM which is either an equilibrium point or a periodic orbit, \cite{lan2013linearization} showed that the linearizing conjugacy can be defined on the entire basin of attraction.
We generalize the results of \cite{lan2013linearization} in two ways: (i) we show that local linearizability implies global linearizability for arbitrary compact NAIMs\footnote{As mentioned in a previous footnote, Igor Mezi\'{c} gives a proof of this boundaryless result in his soon-to-be published textbook \cite{mezic_book} (see also Remark \ref{rem:mezic_remark}).}, and (ii) we prove a global linearizability result for inflowing NAIMs.
Since the slow manifolds for slow-fast systems typically have boundary, the latter result is necessary for our goal of deriving a linear normal form for such systems, and we consider this to be our second major theoretical contribution.
We state this result roughly below (for the precise statement, see Theorem \ref{th:smooth_inflowing_global_linearization} in \S \ref{sec:global_lin_inflowing}).
Recall that the global stable manifold is the basin of attraction in the case of a boundaryless NAIM.
For the precise definition of the global stable manifold of an inflowing NAIM, see Equation \eqref{eq:global_stable_manifold_def} in \S \ref{sec:construct_global_foliation}.

\begin{thmbis}{th:smooth_inflowing_global_linearization}
	The dynamics on the global stable manifold of an inflowing NAIM are globally topologically conjugate to the dynamics linearized at the NAIM.
	If certain additional spectral gap and regularity conditions are assumed, then additionally the dynamics are globally $C^k$ conjugate to the dynamics linearized at the NAIM.
\end{thmbis}
 
In order to prove this result, we use a geometric construction in Appendix \ref{app:wormhole} which may be of independent interest.
Generally speaking, in Appendix \ref{app:wormhole}, we show that any compact inflowing NAIM can be embedded into a compact boundaryless NAIM, in such a way that many properties of the original system are preserved, such as asymptotic rates.

After proving these results, we give two applications to geometric singular perturbation theory, under the assumption that the critical manifold is a NAIM (see the references in \S \ref{sec:applications} for examples, as well as \S \ref{sec:GSP_example}).
Our first application is to show that under this assumption, the Fenichel Normal Form appearing in the literature is valid on the entire union of \emph{global} stable manifolds $\cup_\epsilon\Ws(K_\epsilon)$ of the slow manifolds $K_\epsilon$, rather than just on the union of local stable manifolds $\cup_\epsilon \Wsl(K_\epsilon)$.
Our second application is to show that, assuming an additional ``nonresonance'' condition on the eigenvalues of points on the critical manifold and using our global linearization theorem, we derive a much stronger global normal form which is \emph{linear} in the fast variables.
We reiterate that a linearization result for inflowing NAIMs is essential here, since the slow manifolds for singular perturbation problems typically have boundary.

The remainder of the paper is organized as follows. 
In \S \ref{sec:preliminary_constructions} we give basic definitions, set notation, and give basic constructions to be used in the sequel. 
In particular, we construct the global stable foliation of a NHIM and show that the local stable foliation is a fiber bundle,
and remark that the same constructions work for inflowing NAIMs.
In \S \ref{sec:global_foliation_is_bundle} we give the proof that, if the local stable foliation and the NHIM are $C^k$, then the global stable foliation is a $C^k$ fiber bundle isomorphic (as a disk bundle) to $E^s$. 
In \S \ref{sec:global_linearization}, we show that the dynamics on the global stable manifold of an inflowing NAIM are globally conjugate to the linearized dynamics, and other related results.
In \S \ref{sec:applications}, we give applications to geometric singular perturbation theory.
In \S \ref{sec:conclusion} we conclude by summarizing what we have and have not done.
Finally, Appendix~\ref{app:linear-par-transp} contains a lemma on parallel transport, Appendix~\ref{app:wormhole} allows us to extend some results from boundaryless to inflowing NAIMs, and Appendix~\ref{app:fiber_bundles} reviews some terminology from the theory of fiber bundles, for those readers who are less familiar with it.

\subsection{Acknowledgements}

Jaap Eldering performed the major share of his contribution to this
work while holding a postdoctoral position at ICMC, University of São
Paulo, São Carlos, Brazil, supported by FAPESP grant 2015/25947-6.
Matthew Kvalheim and Shai Revzen were supported by ARO grants W911NF-14-1-0573 and W911NF-17-1-0306 to Revzen.
Kvalheim would like to thank Ralf Spatzier for many helpful conversations related to Theorem \ref{th:fiber_bundle_theorem}.
We also thank the two anonymous referees and the editors for finding a mistake in a lemma, for bringing the reference \cite{jones2009generalized} to our attention, and for their many useful suggestions.

\section{Preliminary constructions}
\label{sec:preliminary_constructions}
\subsection{Construction of the global (un)stable foliation of a NHIM}\label{sec:construct_global_foliation}

Let $Q$ be an $n$-dimensional $C^\infty$ Riemannian manifold, let $f\colon Q\to\T Q$ be a $C^{r\geq 1}$ vector field on $Q$ with $C^r$ flow $\Phi^t$ and let $M\subset Q$ be a compact $r$-normally hyperbolic invariant manifold ($r$-NHIM) for $\Phi^t$.
We recall from \cite{hirsch1977} the definition; specifically we use the most general definition of \concept{eventual relative normal hyperbolicity}.
This means that $M$ is a submanifold that is invariant under $\Phi^t$, and there exists a $\D\Phi^t$-invariant continuous splitting into a Whitney sum
\begin{align}\label{eq:NHIM_splitting}
\T Q|_M = \T M \oplus E^s \oplus E^u
\end{align}
such that $\D\Phi^t|_{E^s}$ and $\D\Phi^t|_{E^u}$ are exponentially contracting and expanding, respectively.
(See Appendix~\ref{app:fiber_bundles} for the definitions of vector bundles and Whitney sums.) 
Furthermore, any contraction or expansion of the tangential flow $\D\Phi^t|_{\T M}$ (up to power $r$) is dominated by the contraction of $\D\Phi^t|_{E^s}$, respectively the expansion of $\D\Phi^t|_{E^u}$.
More precisely, there exist $C > 0$ and $a < 0 < b$ such that for all $m \in M$, $t \ge 0$ and $0 \le i \le r$ we have
\begin{equation}\label{eq:NHIM-exp-rates}
  \minnorm{\D\Phi^t|_{E^u_m}} \ge \frac{e^{b t}}{C} \norm{\D\Phi^t|_{\T_m M}}^i
  \qquad\text{and}\qquad
  \norm{\D\Phi^t|_{E^s_m}} \le C e^{a t} \minnorm{\D\Phi^t|_{\T_m M}}^i.
\end{equation}
Here $\minnorm{A} \coloneqq    \inf \{ \norm{A v} : \norm{v}=1 \}$ denotes the minimum norm of a linear operator $A$.

Denote by $n = n_m + n_s + n_u$ the ranks of the various bundles and note that $n_m = \dim(M)$.
Since the stable and unstable cases are identical under time reversal, we restrict ourselves from now on to the stable case.
Tangent to the stable bundle $E^s$ there exists a \concept{local stable manifold} $\Wsl(M)$, a $C^r$ embedded submanifold\footnote{We will always assume without loss of generality that $\Wsl(M)$ has no boundary. Otherwise we can simply relabel its manifold interior as $\Wsl(M)$.}, with points in $\Wsl(M)$ asymptotically converging to $M$ in forward time.
$\Wsl(M)$ is invariantly fibered by embedded disks $\Wsl(m)$ comprising the leaves/fibers of the \concept{local stable foliation}: 
\begin{align}
\Wsl(M) = \coprod_{m\in M}\Wsl(m)
\end{align}
such that $\Wsl(m)$ intersects $M$ at the unique point $m$ and $\T_m \Wsl(m) = E^s_m$, see \cite[Thm~4.1]{hirsch1977}.
Each of the disks $\Wsl(m)$ is individually a $C^r$ embedded submanifold, but as a family there is generally only (H\"{o}lder) continuous dependence on the basepoint $m \in M$ \cite{hirsch1977,fenichel1974asymptotic}.
We denote by $\Psl:\Wsl(M)\to M$ the continuous projection map sending each fiber $\Wsl(m)$ to its corresponding basepoint $m\in M$.
Note that the $\Wsl(m), m \in M$ are only invariant as a foliation --- not each $\Wsl(m)$ individually --- since each $m \in M$ is generally not a fixed point of $\Phi^t$. 
This local invariance of the foliation $\Wsl(M)$ means that for all $t\geq 0$ and $m \in M$ we have\footnote{%
  When \emph{immediate} relative normal hyperbolicity is assumed (as in \cite[Thm~4.1]{hirsch1977}) then $\Wsl(M)$ is automatically forward invariant when it has constant diameter.
  In the case of \emph{eventual} relative normal hyperbolicity, standard proofs construct $\Wsl(M)$ as the local stable manifold of the map $\Phi^T$ for some fixed $T > 0$, so it might not be clear a priori that the inclusion holds for \emph{all} $t\geq 0$, though it is clear that it would hold for $t$ sufficiently large.
  However, we can always construct a \emph{new} $\Wsl(M)$ that is forward invariant for all $t \ge 0$, as a sublevel set of a Lyapunov function for $M$, see \cite{wilson1967structure,wilson1969smooth}.
}
\begin{align}\label{eq:local_inv}
\Phi^t(\Wsl(m))\subset \Wsl(\Phi^t(m)).
\end{align}
We also have a \concept{global stable manifold} $\Ws(M)\supset \Wsl(M)$  defined by\footnote{This definition works equally well for inflowing NAIMs (see \S \ref{sec:inflowing_NAIMs}), as opposed to the alternative definition $\Ws(M)\coloneqq \bigcup_{t \geq 0}\Phi^{-t}(\Wsl(M))$.}
\begin{align}\label{eq:global_stable_manifold_def}
\Ws(M)\coloneqq    \bigcup_{t \geq 0}\Phi^{-t}\left[(\Psl)^{-1}(\Phi^t(M))\right].
\end{align}

Each of the sets $\Phi^{-t}\left[(\Psl)^{-1}(\Phi^t(M))\right]$ is an embedded submanifold of $Q$ (diffeomorphic to $\Wsl(M)$), and thus $\Ws(M)$ is a $C^r$ immersed submanifold of $Q$ when given the final topology with respect to the family of inclusions $\Phi^{-t}\left[(\Psl)^{-1}(\Phi^t(M))\right]\hookrightarrow \Ws(M)$.
An atlas of charts for $\Ws(M)$ consists of the union of atlases for all of the manifolds $\Phi^{-t}\left[(\Psl)^{-1}(\Phi^t(M))\right]$ --- since the flow $\Phi^t$ is $C^r$, it can be checked that this is a $C^r$ atlas.

Let us now construct a global stable foliation as
\begin{align}
\Ws(M) = \coprod_{m\in M}\Ws(m), \qquad \Ws(m)\coloneqq   \bigcup_{t \geq 0}\Phi^{-t}(\Wsl(\Phi^t(m))).
\end{align}
Note that equation \eqref{eq:local_inv} implies that the union consists of strictly increasing sets, i.e., 
\begin{align}
\Phi^{-t}(\Wsl(\Phi^t(m))) \subset \Phi^{-t'}(\Wsl(\Phi^{t'}(m)))\quad \text{when } t \leq t'.
\end{align}
Let us prove that $\Ws(M)$ is invariant, that is, for all $t \in \R$ and $m \in M$ we have 
\begin{align}
\Phi^t(\Ws(m))=\Ws(\Phi^t(m)).
\end{align}
This follows from the following sequence of equivalent statements, with $t \in \R$ fixed:
\begin{alignat*}{2}
& &&x\in \Ws(\Phi^t(m))\\
&\exists \tau_0 \geq 0\colon  \forall \tau \geq \tau_0\colon  &&x \in \Phi^{-\tau}(\Wsl(\Phi^\tau\circ \Phi^t(m)))\\
&\exists \tau_0'\geq 0\colon  \forall \tau' \geq \tau_0'\colon\quad &&x \in \Phi^t\circ\Phi^{-\tau'}(\Wsl(\Phi^{\tau'}(m)))\\
& &&x \in \Phi^t(\Ws(m)).
\end{alignat*}
Note that each global leaf $\Ws(m)$ is a $C^r$ embedded submanifold of $\Ws(M)$.
To see this, note that given any $m \in M$ and $x \in \Ws(m)$, by definition of the global foliation there exists $t > 0$ such that $\Phi^t(x) \in \Wsl(\Phi^t(m))$.
Letting $U'$ be a neighborhood of $\Phi^t(x)$ in $\Wsl(M)$ and considering $U \coloneqq \Phi^{-t}(U') \ni x$, we see that any point $x \in \Ws(m)$ has a neighborhood $U \subset \Ws(M)$ with $U \cap \Ws(m)$ an embedded submanifold of $\Ws(M)$ (by invariance of the foliation), so it follows that $\Ws(m)$ is embedded in $\Ws(M)$.
(But since $\Ws(M)$ is generally only immersed in $Q$, any global leaf $\Ws(m)$ is generally only immersed in $Q$.)

We define the global projection $\Ps\colon \Ws(M) \to M$ to be the map that sends global fibers $\Ws(m)$ to their basepoints, just like the local projection $\Psl$.
Assume now that the local stable foliation is $C^{k\geq 0}$, by which we mean that $\Psl\in C^k$.
(Recall that $\Psl \in C^0$ automatically.)
We now show that this implies $\Ps \in C^k$.

Let $x \in \Ws(M)$ and $t \geq 0$ be such that $x \in \Phi^{-t}(\Wsl(M))$.
This implies that $x' = \Phi^t(x)\in\Wsl(M)$.
Choose a neighborhood $U_{x'}$ of $x'$ in $\Wsl(M)$.
Then $U_x\coloneqq    \Phi^{-t}(U_{x'})$ is a neighborhood of $x$ in $\Ws(M)$.
Now for any $y \in U_x$ we have that $\Phi^t(y)\in\Wsl(M)$ and by invariance of the local stable foliation it follows that
\begin{align}
\Ps(y) = (\Phi^{-t}\circ \Psl \circ \Phi^t)(y).
\end{align}
Thus it is clear that $\Ps\in C^k$ if $\Psl \in C^k$ and $k \leq r$ (i.e., $\Phi^t \in C^k$).

We conclude this section by showing that, not only is $\Ps\in C^k$ if $\Psl \in C^k$, but also that $\Ps$ is a submersion if $k \geq 1$.

\begin{Prop}\label{prop:P_submersion}
	If $\Psl$ is $C^k$ with $1 \le k \le r$, then $\Ps\colon \Ws(M) \to M$ is a $C^k$ submersion.
\end{Prop}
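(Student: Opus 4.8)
The plan is to reduce the statement to the corresponding fact for the \emph{local} projection $\Psl$, and then to prove that $\Psl$ is a submersion by propagating full rank from $M$ along the flow and using lower semicontinuity of rank. Recall that we have already shown $\Ps\in C^k$, so only the submersion property remains.

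First I would invoke the local formula established above: given $x\in\Ws(M)$, choose $t\geq 0$ with $x'\coloneqq\Phi^t(x)\in\Wsl(M)$; then on a neighborhood of $x$ one has $\Ps=\Phi^{-t}\circ\Psl\circ\Phi^t$. Since $\Phi^t$ is a $C^r$ diffeomorphism of $Q$ carrying the open subset $\Phi^{-t}(\Wsl(M))\subseteq\Ws(M)$ onto $\Wsl(M)$, and since $\Phi^{-t}$ restricts to a diffeomorphism of $M$, both $\D\Phi^t|_x\colon\T_x\Ws(M)\to\T_{x'}\Wsl(M)$ and $\D\Phi^{-t}|_{\Psl(x')}\colon\T_{\Psl(x')}M\to\T_{\Ps(x)}M$ are linear isomorphisms. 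Hence $\D\Ps|_x$ is surjective if and only if $\D\Psl|_{x'}$ is, and it suffices to show that $\D\Psl|_y$ is surjective for every $y\in\Wsl(M)$.

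When $y=m\in M$ this is immediate: since the leaf $\Wsl(m)$ meets $M$ only in the point $m$, the inclusion $\iota\colon M\hookrightarrow\Wsl(M)$ satisfies $\Psl\circ\iota=\id_M$, so $\D\Psl|_m\circ\D\iota|_m=\id_{\T_m M}$ and $\D\Psl|_m$ is onto. For a general $y\in\Wsl(M)$ I would use that the local invariance \eqref{eq:local_inv} of the foliation amounts exactly to the equivariance $\Psl\circ\Phi^t=\Phi^t\circ\Psl$ on $\Wsl(M)$ for all $t\geq 0$ --- working, if necessary, with the forward-invariant version of $\Wsl(M)$ obtained as a Lyapunov sublevel set (cf.\ the footnote above). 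Differentiating this relation at $y$ gives $\D\Psl|_{\Phi^t(y)}\circ\D\Phi^t|_y=\D\Phi^t|_{\Psl(y)}\circ\D\Psl|_y$; since $\D\Phi^t$ is a linear isomorphism at each point of $Q$ and carries the relevant tangent subspaces into one another, the two sides have the same ranks as $\D\Psl|_{\Phi^t(y)}$ and $\D\Psl|_y$ respectively, so $\rank\D\Psl|_{\Phi^t(y)}=\rank\D\Psl|_y$ for all $t\geq 0$; that is, the rank of $\D\Psl$ is constant along forward orbits. Finally, since $\Phi^t(y)\to M$ as $t\to\infty$ and $M$ is compact, there exist $t_n\to\infty$ and $p\in M$ with $\Phi^{t_n}(y)\to p$; expressing $\Psl$ in local charts about $p$ and using lower semicontinuity of the rank of a Jacobian, we obtain $n_m=\rank\D\Psl|_p\leq\liminf_n\rank\D\Psl|_{\Phi^{t_n}(y)}=\rank\D\Psl|_y\leq n_m$, hence $\rank\D\Psl|_y=n_m$, as needed.

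The reduction to $\Psl$ and the computation on $M$ are routine bookkeeping; the one delicate point is handling points $y\in\Wsl(M)\setminus M$, which are never carried onto $M$ in finite (forward) time. The equivariance-plus-semicontinuity step is precisely what circumvents this difficulty, and I expect it to be the crux of the argument.
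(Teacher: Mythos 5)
Your proposal is correct and takes essentially the same approach as the paper's proof: both exploit the conjugacy relation $\Ps\circ\Phi^t=\Phi^t\circ\Ps$ (equivalently $\Psl\circ\Phi^t=\Phi^t\circ\Psl$ on $\Wsl(M)$), the fact that $\Psl|_M=\id_M$, and the forward convergence of orbits to $M$ to propagate full rank of the differential. The only variation is in packaging --- where you establish that $\rank\D\Psl$ is constant along forward orbits and then invoke lower semicontinuity of rank at a limit point on $M$, the paper observes equivalently that full rank is an open condition, fixes a neighborhood $U\supset M$ in $\Wsl(M)$ on which $\D\Psl$ is surjective, and flows $x$ forward long enough to land in $U$.
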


\begin{proof}
	We have already shown above that $\Ps \in C^k$, so it suffices to show that $\rank(\D\Psl|_{\T M}) = \dim(M)$ on all of $\Ws(M)$.
	Since $\Psl|_M = \id_M$, it follows that $\rank(\D\Psl|_{\T M}) = \dim(M)$.
	Since being full rank is an open condition, it follows that $\D\Psl$ is full rank on some relatively open neighborhood $U$ of $M$ in $\Wsl(M)$.
	
	Now let $x \in \Ws(M)$ be arbitrary.
	First, by construction of $\Ws(M)$ there exists a $T_1 > 0$ such that $\Phi^{T_1}(x) \in \Wsl(M)$.
	Next, since every point in $\Wsl(M)$ asymptotically converges to $M$, there exists $T_2 > 0$ such that $\Phi^{T_2}(\Phi^{T_1}(x))\in U$.
	Defining  $T\coloneqq T_1 + T_2 > 0$, we have $\Phi^T(x)\in U$.
	
	Since $\forall t \in \R\colon  \Ps\circ \Phi^t = \Phi^t \circ \Ps$, it follows that $\D \Phi^T_{\Ps(x)}\D \Ps_x = \D \Ps_{\Phi^T(x)}\D\Phi^T_x = \D (\Psl)_{\Phi^T(x)}\D\Phi^T_x$.
	The latter composition is formed of two surjective linear maps, and hence $\D \Phi^T_{\Ps(x)}\D \Ps_x$ is also a surjective linear map.
	The linear map $\D\Phi^T|_{\Ps(x)}$ is invertible since $\Phi^T$ is a diffeomorphism, so this implies that $\D \Ps_x\colon \T_x\Ws(M) \to \T_{\Ps(x)} M$ is surjective.
\end{proof}

\subsection{Fiber bundle structure of the local stable foliation}
\label{sec:local_stable_is_bundle}

Let $\pi\colon \T Q|_M \to M$ be the natural projection sending $v \in \T_m Q$ to $m$, and let $\widetilde{E}^s$ be any $C^r$ subbundle of $\T Q|_M$ which $C^0$ approximates $E^s$ \cite[p.72 Prop.~3.2.3]{normallyHypMan}. 
\linelabel{R2_1}(Recall that $E^s$ is generally only a continuous subbundle of $TQ|_M$.)
As shown in \cite{fenichel1974asymptotic,hirsch1977} there exists a fiber-preserving homeomorphism $\rho_0\colon U \subset \widetilde{E}^s\to \Wsl(M)$, where $U \subset \widetilde{E}^s$ is a connected neighborhood of the zero section.
Additionally, the restriction of $\rho_0$ to each fiber $\widetilde{E}^s_m$ is a  $C^r$ map.
Here we show that if additionally the local stable foliation of $\Wsl(M)$ is $C^{k\geq 1}$, then $\rho_0$ can be taken to be a $C^k$ fiber-preserving diffeomorphism.

Fiber bundle concepts from Appendix \ref{app:fiber_bundles} (in particular, Definition \ref{def:vector_bundles} and Example \ref{ex:how_to_show_its_a_bundle}) will be used in the proof of Lemma \ref{lem:rho_0_construction} below.
Here and in the rest of the paper, by a $C^k$ isomorphism of manifolds we mean a homeomorphism if $k = 0$ and a $C^k$ diffeomorphism if $k\geq 1$.
A $C^k$ fiber bundle isomorphism is a $C^k$ isomorphism of manifolds which is also fiber-preserving; see Appendix \ref{app:fiber_bundles}.

\begin{Lem}\label{lem:rho_0_construction}
	Let $M$ be a $1$-NAIM, and assume that $\Psl \in C^k$ (hence $\Wsl(M), M\subset Q$ are necessarily $C^k$ submanifolds).
	Then $\Psl\colon \Wsl(M)\to M$ is a disk bundle.
	More specifically, there exists a neighborhood $U$ of the zero section of $\widetilde{E}^s$ and a $C^{k}$ disk bundle isomorphism $\rho_0\colon U \to \Wsl(M)$ covering $\id_M$ (identifying $M$ with the zero section of $\widetilde{E}^s$).
\end{Lem}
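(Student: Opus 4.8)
The plan is to upgrade the known topological fiber-preserving homeomorphism $\rho_0$ to a $C^k$ diffeomorphism by exploiting the hypothesis $\Psl \in C^k$, using the fiberwise $C^r$ structure already present. First I would recall from \cite{fenichel1974asymptotic,hirsch1977} that there is a connected neighborhood $U_0$ of the zero section of $\widetilde{E}^s$ and a fiber-preserving homeomorphism $\sigma\colon U_0 \to \Wsl(M)$ which is $C^r$ on each fiber, and whose fiberwise derivative at the zero section is the identity (after identifying $\T_m\Wsl(m) = E^s_m$ with $\widetilde{E}^s_m$ via the chosen $C^0$-close subbundle and projecting). The point is that $\sigma$ is \emph{not} a priori jointly smooth, and the whole content of the lemma is to replace it by something that is. I would not try to smooth $\sigma$ directly; instead I would build the desired $\rho_0$ by a different, more intrinsic construction that uses $\Psl$.

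The key idea I would pursue: since $\Psl \in C^k$ with $k \ge 1$, Proposition~\ref{prop:P_submersion} (applied locally, i.e.\ to $\Psl$ itself, which is the $t=0$ case) already tells us $\Psl$ is a $C^k$ submersion with fibers the $\Wsl(m)$, which are $C^r$ embedded disks. I would then invoke the standard fact (Ehresmann-type / submersion-with-disk-fibers, cf.\ the discussion around \cite[Cor.~31]{meigniez2002submersions} cited in the text) that a \emph{proper} $C^k$ submersion with contractible fibers is a $C^k$ fiber bundle — except here properness may fail, so instead I would work infinitesimally and integrate. Concretely: choose a $C^{k-1}$ (or $C^\infty$, by smoothing an auxiliary metric) Riemannian metric on $\Wsl(M)$, let $\Hor \coloneqq \ker(\D\Psl)^\perp$ be the horizontal distribution, which is $C^{k-1}$ since $\Psl$ is $C^k$; then the horizontal lift of vector fields on $M$ gives, via the flow, $C^{k-1}$ local trivializations of $\Psl$ over small balls $U_m \subset M$. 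This realizes $\Psl\colon \Wsl(M) \to M$ as a $C^{k-1}$ disk bundle. To recover the full $C^k$ regularity and the identification with $\widetilde{E}^s$, I would instead choose the horizontal distribution more carefully — e.g.\ take $\Hor$ to be a $C^k$ complement to $\ker \D\Psl = \T\Wsl(m)$ inside $\T\Wsl(M)$, which exists because $\ker\D\Psl$ is a $C^{k-1}$ but the quotient bundle $\T\Wsl(M)/\ker\D\Psl \cong \Psl^*\T M$ is $C^k$; one can then use a tubular-neighborhood / exponential-map argument fiberwise. Honestly, the cleanest route is: use the fiberwise $C^r$ exponential-type maps $\sigma|_{\widetilde{E}^s_m}$ and show that joint $C^k$-dependence on $m$ follows from $\Psl \in C^k$ by writing $\sigma$ as (inverse of) a map built from $\Psl$ and a fixed $C^k$ section of vertical coordinates. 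Then $\rho_0 \coloneqq \sigma$ restricted to a possibly smaller $U \subset U_0$ is the desired $C^k$ disk bundle isomorphism covering $\id_M$, and Example~\ref{ex:how_to_show_its_a_bundle} packages the local trivializations into the bundle statement.

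The main obstacle I expect is the interplay between \emph{fiberwise} $C^r$ regularity (which is strong) and \emph{transverse/joint} regularity (which is exactly $C^k$ and no better): one must argue that $C^k$-dependence of $\Psl$ on its argument forces the leaf family $\{\Wsl(m)\}$ to vary in a $C^k$ fashion, \emph{and} that the fiberwise-smooth parametrizations can be chosen to glue to a jointly $C^k$ map. Concretely, the delicate point is constructing a $C^k$ (not merely $C^{k-1}$) horizontal complement, or equivalently a $C^k$ vertical projection, since naive constructions (orthogonal complement w.r.t.\ a smooth metric) only see $\ker\D\Psl$, which is $C^{k-1}$. I would resolve this by instead characterizing the fibers via $\Psl$ itself: near a point, write coordinates in which $\Psl$ is a linear projection (possible since $\Psl$ is a $C^k$ submersion, by the $C^k$ submersion normal form), transport the fixed $C^r$ fiberwise disk structure through these coordinates, and check the composite is $C^{\min(k,r)} = C^k$ (recall $k \le r$). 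A secondary, more routine obstacle is arranging that the resulting $\rho_0$ genuinely restricts to the identity on $M$ and has the zero section mapping to $M$; this is handled by composing with the fiberwise derivative at $0$, which is an isomorphism $\widetilde{E}^s_m \to E^s_m$ depending $C^k$ on $m$ once joint regularity is established, and absorbing it into the choice of $\widetilde{E}^s$.
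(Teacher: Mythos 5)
Your ``cleanest route'' paragraph does identify the paper's strategy --- build $\rho_0$ (or its inverse) explicitly as a composition in which $\Psl$ is the \emph{only} ingredient of limited regularity, so that $C^k$-ness follows by the chain rule rather than by any smoothing or implicit-function trick applied to the a priori merely continuous $\sigma$. But your sketch stops at ``a map built from $\Psl$ and a fixed $C^k$ section of vertical coordinates'' without saying which map, and that is precisely the content of the proof. The paper takes the extended exponential $\widehat{\exp} = (\pi,\exp)\colon \T Q|_M \to M\times Q$ (a $C^\infty$ fiber bundle isomorphism near the zero section), embeds $\Wsl(M)$ into $M\times Q$ via $(\Psl,\id_Q)$ --- this is the sole point where $\Psl$ enters, contributing exactly $C^k$ --- pulls back by $\widehat{\exp}^{-1}$, and then composes with the orthogonal projection $\tilde\pi^s\colon \T Q|_M\to\widetilde{E}^s$. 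The resulting map $\tilde\pi^s\circ\widehat{\exp}^{-1}|_{\Wsl(M)}$ is $C^k$; its derivative along each leaf is surjective because $\ker\tilde\pi^s$ is transverse to $E^s_m = \T_0\bigl(\widehat{\exp}^{-1}(\Wsl(m))\bigr)$ (here the hypothesis that $\widetilde{E}^s$ is $C^0$-close to $E^s$ is used essentially, not merely as a convenience), hence a bijection by dimension count; and the global inverse function theorem then yields the fiber-preserving $C^k$ diffeomorphism $\rho_0$ as the inverse. That invertibility check and the choice of the two smooth bookends $\widehat{\exp}$, $\tilde\pi^s$ are the parts your proposal leaves open, and they are not automatic.

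On your other routes: your instinct to distrust the Ehresmann/horizontal-distribution approach is correct. Any construction passing through $\ker\D\Psl$ (or its orthogonal complement) differentiates $\Psl$ and so can deliver at best $C^{k-1}$ trivializations, which is strictly weaker than the lemma's conclusion. The paper avoids this by never differentiating $\Psl$ directly --- the inverse function theorem preserves the differentiability class of the map, and the only derivative it needs is the tangent map along the (separately $C^r$-smooth) leaves. The $C^k$ submersion normal form for $\Psl$ would give local trivializations and hence the bare disk-bundle statement, but it does not by itself produce a global $\rho_0$ covering $\id_M$ and identified with a neighborhood of the zero section of the \emph{given} bundle $\widetilde{E}^s$, which is the sharper assertion the rest of the paper (in particular the proof of Theorem~\ref{th:fiber_bundle_theorem}) actually uses.
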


\begin{Rem}\label{rem:forced_smoothness_lemma_remark}
	If $M$ is an $r$-NAIM for a $C^r$ vector field, then $M$ and $\Wsl(M)$ are automatically $C^r$ submanifolds of $Q$ (and hence $\Ws(M)$ is an immersed $C^r$ submanifold, as we have shown).
	See \cite[Ch. 1]{eldering2013normally} for a discussion of this.
	We will use Lemma \ref{lem:rho_0_construction} in proving Theorem \ref{th:fiber_bundle_theorem} for a $1$-NAIM which is a $C^r$ submanifold --- a slightly more general situation than an $r$-NAIM --- which explains the slightly weaker hypotheses here.
\end{Rem}

\begin{proof}
	As mentioned, for $k = 0$ the result is shown in \cite{fenichel1974asymptotic,hirsch1977} so we may assume $k \geq 1$. The latter case is implicit in the existing proofs of $C^k$ smoothness of local stable fibers, but we make it explicit here for later reference.
	\linelabel{R2_2}Consider the extended exponential map\footnote{Recall that we have endowed $Q$ with a Riemannian metric, used in the definition of spectral gap estimates \eqref{eq:NHIM-exp-rates}.
	 Here --- and throughout the rest of the paper --- we have in mind the exponential map associated to this Riemannian metric, although for the purposes of this Lemma, any metric will work equally well.
	}
	\begin{equation}\label{eq:exp-extented}
	\widehat{\exp} = (\pi,\exp)\colon \T Q|_M \to M \times Q
	\end{equation}
	that remembers the base point $m \in M$.
	This is a fiber bundle isomorphism between a neighborhood of the zero section of $\T Q|_M$ and a neighborhood of $\textup{diag}(M)$ in the trivial bundle $M \times Q$, covering the identity on $M$, where the zero section of $\T Q|_M$ and $\textup{diag}(M)$ are identified with $M$.
    Furthermore, we view $\Wsl(M) \subset Q$ as a $C^k$ submanifold of $M \times Q$ via the embedding $(\Ps,\id_Q)$, fibered by the images of the leaves $\Ws(m)$.
	It follows that $\widehat{\exp}^{-1}\bigl(\Wsl(M)\bigr)$ is a submanifold of $\T Q|_M$ fibered by the leaves $\widehat{\exp}^{-1}(\Wsl(m))$, and the leaf  $\widehat{\exp}^{-1}\bigl(\Wsl(m)\bigr)\subset \T_m Q$ is tangent to $E^s_m$ at the zero section since the derivative of $\widehat{\exp}|_{\T Q_m}$ at $0$ is the identity for any $m\in M$.
	Here we are making the usual linear identification $\T_0 \T_m Q \cong \T_m Q$.
	
	Let $\tilde \pi^s\colon \T Q|_M \to \widetilde{E}^s$ denote orthogonal projection onto $\widetilde{E}^s$.
	We have that $\tilde \pi^s \in C^{r}$, and when $\widetilde{E}^s$ is sufficiently $C^0$-close to $E^s$ then $\ker(\tilde \pi^s)$ and $E^s$ are transverse.
	Thus $\D\bigl(\tilde \pi^s \circ \widehat{\exp}^{-1}|_{\Wsl(m)}\bigr)$ is surjective for each $m \in M$, so by dimension counting this map is a linear bijection between $E^s_m\oplus \T_m M \cong \T_m \Wsl(M)$ and $\widetilde{E}^s_m\oplus \T_m M \cong \T_m \widetilde{E}^s$ for each $m \in M$.
	
	The global inverse function theorem \cite[\S~1.8 ex.~14]{guillemin1974differential} now implies that $\tilde \pi^s \circ \widehat{\exp}^{-1}|_{\Wsl(M)}$ is a $C^k$ diffeomorphism from some neighborhood of $\textup{diag}(M)$ onto a neighborhood $U$ of the zero section of $\widetilde{E}^s$.
	Thus the inverse
	\begin{equation*}
	\rho_0\colon U\to \Wsl(M)
	\end{equation*}
	is well-defined and is a fiber-preserving $C^k$ diffeomorphism onto its image. By construction it  maps the zero section to $M$ and covers the identity map.
	 \end{proof}

\subsection{Inflowing and Overflowing NAIMs}\label{sec:inflowing_NAIMs}
Suppose now that $M$ is a compact manifold but that $M$ has possibly nonempty boundary, $\partial M \neq \varnothing$.
If $\Phi^t(M) \subset M$ for all $ t \geq 0$ and the vector field $f$ points strictly inward at $\partial M$, we call $M$ an \concept{inflowing} invariant manifold.
Similarly, if $\Phi^t(M) \subset M$ for all $ t \leq 0$ and the vector field $f$ points strictly outward at $\partial M$, we call $M$ an \concept{overflowing} invariant manifold.
If $M$ is inflowing (respectively overflowing) invariant and has a splitting \eqref{eq:NHIM_splitting} satisfying exponential rates \eqref{eq:NHIM-exp-rates}, but with $E^u = \varnothing$, we call $M$ an \concept{inflowing (respectively overflowing) $r$-normally attracting invariant manifold ($r$-NAIM)}.
If $\partial M = \varnothing$ and $M$ is invariant, then $M$ is vacuously both inflowing and overflowing.
We refer to such an $M$ simply as an $r$-NAIM.
We sometimes use the term ``NAIM'' to refer to $1$-NAIMs or if we do not wish to emphasize the precise degree of hyperbolicity, and we similarly sometimes use ``NHIM''.

The main theorem about inflowing NAIMs is that, 
like boundaryless NHIMs, inflowing NAIMs also have a local stable manifold (with boundary) and a local stable foliation \cite{fenichel1974asymptotic,fenichel1971persistence}.
Note that in this case the local stable manifold has boundary, is codimension-0, and its manifold interior is an open neighborhood of the manifold interior of $M$.
Additionally, the interior of the global stable manifold is open in $Q$ and a neighborhood of the manifold interior of the NAIM.
Unlike boundaryless NHIMs, however, inflowing NAIMs do not generally persist under perturbations.

The main theorem about overflowing NAIMs is that, 
like boundaryless NHIMs, overflowing NAIMs persist under perturbations \cite{fenichel1971persistence}.
We will use this fact in \S \ref{sec:applications}.
Unlike boundaryless NHIMs, however, overflowing NAIMs do not generally possess a stable foliation.

\begin{Rem}\label{rem:inflowing_NAIM_extension}
	If $\Psl \in C^k$ for an inflowing NAIM, then the same proof as for boundaryless NHIMs shows that $\Ps\in C^k$ also.
	Furthermore, Proposition \ref{prop:P_submersion} and Lemma \ref{lem:rho_0_construction} also hold for inflowing NAIMs.
	The proof of Lemma \ref{lem:rho_0_construction} is identical.
	For the proof of Proposition \ref{prop:P_submersion}, one simply pays attention to the facts that (i) since $M$ is positively invariant, points never leave the stable foliation over $M$ when flowing forward in time, and (ii) if $\Phi^t(x) \in \Ws(m)$ for $t > 0$, $x \in \Ws(M)$, and $m \in M$, then $\Phi^{-t}(m)\in M$.
	Additionally, the same argument given in \S \ref{sec:construct_global_foliation} shows that each global fiber $\Ws(m)$ is now an embedded submanifold of $Q$, since the manifold interior of $\Ws(M)$ is open in $Q$ and thus trivially embedded.
	This argument works even for $\Ws(m)$ with $m \in \partial M$, since inflowing invariance implies that $\Phi^t(m) \in \interior M$ for $t > 0$, with $\interior M$ denoting the manifold interior of $M$. 
\end{Rem}

\section{The global stable foliation of a NHIM is a fiber bundle}
\label{sec:global_foliation_is_bundle}
As mentioned in \S \ref{sec:construct_global_foliation}, $\Psl\colon \Wsl(M)\to M$ is in general only (H\"{o}lder) continuous.
However, in many cases of interest $\Psl\colon \Wsl(M) \to M$ is $C^{k\geq 1}$ (and thus $\Ps\colon \Ws(M)\to M$ is also $C^k$ as shown in \S \ref{sec:preliminary_constructions}).
In this section we prove that if $\Psl \in C^{k\geq 0}$, then $\Ps\colon \Ws(M)\to M$ is a $C^k$ fiber bundle with fiber $\R^{n_s}$.
See Appendix \ref{app:fiber_bundles} for the relevant fiber bundle concepts. 
By reversing time the corresponding result that the global unstable manifold is a fiber bundle follows.

The topology on $\Ws(M)$ compatible with its fiber bundle structure is generally finer than the subspace topology induced from $Q$ since $\Ws(M)$ is generally only an immersed submanifold of $Q$, as discussed in \S \ref{sec:preliminary_constructions}.
Consequently, the individual fibers $\Ws(m)$ of $\Ws(M)$ are generally also only immersed submanifolds of $Q$, though they are embedded submanifolds of $\Ws(M)$ as we have seen in \S \ref{sec:preliminary_constructions}.

However if $M$ is a NAIM so that $E^u = \varnothing$, then $M$ is asymptotically stable and $\Ws(M)$ is an open neighborhood of $M$, hence trivially an embedded submanifold.
More generally, if $M$ has boundary and is an inflowing NAIM, then $\Ws(M)$ is an embedded codimension-0 submanifold with boundary\footnote{However, note that the boundary of $\Ws(M)$ is only $C^k$ if $\Psl \in C^k$, and hence generally not smooth if $\Psl \in C^0$ only.}.
Every boundaryless NHIM $M$ is a NAIM for the dynamics restricted to the invariant manifold $\Ws(M)$, and similarly $M$ is a NAIM for the time-reversed dynamics restricted to $\Wu(M)$.
Hence it suffices to prove that $\Ws(M)$ is a fiber bundle over $M$ for the case that $M$ is a NAIM.

To obtain the generality needed for our application in \S \ref{sec:applications}, we actually prove that $\Ws(M)$ is a fiber bundle for $M$ an inflowing NAIM --- since a boundaryless NAIM is vacuously inflowing, this implies the other results.

See \cite[Ch. 2]{hirsch1976differential} for the definition of the Whitney topologies, and also Remark \ref{rem:whitney} below.

\begin{Th}\label{th:fiber_bundle_theorem}
	Let $M\subset Q$ be a compact inflowing $1$-NAIM for the flow $\Phi^t$ generated by the $C^r$ vector field $f$
    on $Q$, and assume that $M\subset Q$ is a $C^r$ submanifold.
	Further assume that the local projection $\Psl\colon \Wsl(M)\to M$ is $C^k$, with $0 \leq k \leq r$.
	Then the global projection $\Ps\colon \Ws(M) \to M$ defines a $C^k$ fiber bundle structure on the global stable manifold $\Ws(M)$.
	Furthermore, $\Ws(M)$ is $C^k$ isomorphic (as a disk bundle) to any $C^k$ vector bundle over $M$ which approximates $E^s$ in the $C^0$ Whitney topology.
\end{Th}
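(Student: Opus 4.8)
The plan is to realize $\Ws(M)$ as an increasing union of open sets, each of which is a $C^k$ disk bundle isomorphic to $\widetilde E^s$, and then to splice the resulting trivializations into a single global one, using the flow to supply the ``radial'' directions required by the splicing. First the reductions: since a boundaryless NHIM is a NAIM for its restricted dynamics on $\Ws(M)$ (and on $\Wu(M)$ after time reversal), and a boundaryless NAIM is vacuously inflowing, it suffices to treat an inflowing $1$-NAIM, as explained in \S\ref{sec:global_foliation_is_bundle}. By Remark~\ref{rem:inflowing_NAIM_extension} I may use Proposition~\ref{prop:P_submersion} and Lemma~\ref{lem:rho_0_construction}: $\Ps\in C^k$, and Lemma~\ref{lem:rho_0_construction} supplies a $C^k$ disk bundle isomorphism from a neighborhood of the zero section of $\widetilde E^s$ onto $\Wsl(M)$ covering $\id_M$. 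Precomposing it with a fiberwise radial $C^k$ disk bundle automorphism of $\widetilde E^s$ onto that neighborhood (each fiber $\R^{n_s}$ being $C^k$-diffeomorphic to a ball), I obtain a $C^k$ disk bundle isomorphism $\bar\rho\colon\widetilde E^s\xrightarrow{\sim}\Wsl(M)$ covering $\id_M$ and carrying the zero section to $M$.

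Next the exhaustion. Recall from \S\ref{sec:construct_global_foliation} that $\Ws(M)=\bigcup_{T\ge0}\mathcal W_T$ with $\mathcal W_T\coloneqq\Phi^{-T}\bigl[(\Psl)^{-1}(\Phi^T(M))\bigr]$, the $\mathcal W_T$ increasing with $T$, each an embedded $C^k$ submanifold carried $C^k$-diffeomorphically by $\Phi^T$ onto $(\Psl)^{-1}(\Phi^T(M))\subset\Wsl(M)$; moreover each $\mathcal W_T$ is open in $\Ws(M)$ (an injective immersion between equidimensional manifolds is open, and $\Ws(M)$ carries the final topology with respect to the $\mathcal W_{T'}\hookrightarrow\Ws(M)$). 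Using $\Ps\circ\Phi^t=\Phi^t\circ\Ps$ I would check that this diffeomorphism intertwines $\Ps|_{\mathcal W_T}$ with $\Psl$ restricted to the sub-base $\Phi^T(M)$; composing with $\bar\rho^{-1}$ over $\Phi^T(M)$ and then with the canonical identification of $\widetilde E^s|_{\Phi^T(M)}$ with the pullback of $\widetilde E^s$ along the self-map $\Phi^T|_M\colon M\to M$ --- which is isomorphic to $\widetilde E^s$ over $\id_M$ by homotopy invariance of vector bundle pullbacks, as $\Phi^T|_M$ is homotopic to $\id_M$ through the embeddings $\{\Phi^{sT}|_M\}_{s\in[0,1]}$ --- yields, after a routine bookkeeping of base maps, a $C^k$ disk bundle isomorphism $\theta_T\colon\mathcal W_T\xrightarrow{\sim}\widetilde E^s$ covering $\id_M$ and taking the distinguished section $M\subset\mathcal W_T$ to the zero section. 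This already shows that $\Ps$ is a $C^k$ submersion with fibers $C^k$-diffeomorphic to $\R^{n_s}$ (nested unions of open $n_s$-disks) and that each $\mathcal W_T$ is a $C^k$ disk bundle; it remains to globalize.

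To globalize, take $\Wsl(M)$ to be a forward-invariant Lyapunov sublevel set with compact closure (permissible by a footnote in \S\ref{sec:construct_global_foliation}) and pass to a sequence $\mathcal W_0\subset\mathcal W_1\subset\cdots$ with $\overline{\mathcal W_j}\subset\mathcal W_{j+1}$ (compact closures in $\Ws(M)$) and $\bigcup_j\mathcal W_j=\Ws(M)$. For suitable times, the flow embeds $\mathcal W_{j+1}$ as an open subset of $\mathcal W_j$ covering an embedding $M\hookrightarrow\interior M$; read in reverse, it pushes the fibers of $\mathcal W_j$ outward onto those of $\mathcal W_{j+1}$ along flow lines, and these flow lines are the radial directions along which I would perform the splicing. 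Concretely, I would construct inductively $C^k$ disk bundle isomorphisms $\beta_j\colon\mathcal W_j\xrightarrow{\sim}\widetilde E^s$ over $\id_M$, sending $M$ to the zero section, with $\beta_{j+1}=\beta_j$ on $\mathcal W_{j-1}$ and with the images $\beta_j(\mathcal W_{j-1})$ exhausting $\widetilde E^s$; then $\beta\coloneqq\lim_j\beta_j$ is a well-defined $C^k$ disk bundle isomorphism $\Ws(M)\xrightarrow{\sim}\widetilde E^s$ over $\id_M$. The inductive step sets $\beta_{j+1}=\gamma\circ\theta_{(j+1)T_0}$ for a $C^k$ disk bundle automorphism $\gamma$ of $\widetilde E^s$ chosen with $\gamma\circ\theta_{(j+1)T_0}=\beta_j$ on $\mathcal W_{j-1}$; the existence of such $\gamma$ is the heart of the matter. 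Indeed, $\beta_j$ and $\theta_{(j+1)T_0}|_{\mathcal W_j}$ are two $C^k$ disk bundle embeddings $\mathcal W_j\hookrightarrow\widetilde E^s$ over $\id_M$ agreeing along the core $M$ (each sends it to the zero section), and one needs an ambient fiberwise $C^k$ automorphism of $\widetilde E^s$ carrying one to the other over the relatively compact $\mathcal W_{j-1}$ --- a parametrized (over $M$) ``uniqueness of tubular neighborhoods''/isotopy-extension statement. This is the main obstacle; the delicate case is $k=0$, where one cannot run the usual smooth isotopy-flow and must instead argue topologically, e.g.\ via the topological isotopy extension theorem or Brown's theorem on monotone unions of open cells. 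The flow-line radial directions of the previous step are precisely what make the required isotopies available explicitly, fiber by fiber and continuously over $M$.

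Finally, independence of the approximating bundle: if $\widetilde E^s$ and $\widehat E^s$ are $C^k$ subbundles of $\T Q|_M$ both $C^0$-approximating the (merely continuous) bundle $E^s$, then fiberwise orthogonal projection $\widetilde E^s\to\widehat E^s$ is a $C^k$ vector bundle isomorphism over $\id_M$ once the approximations are close enough, so $\Ws(M)$, being $C^k$ disk-bundle isomorphic to one such $\widetilde E^s$, is $C^k$ disk-bundle isomorphic to every such bundle. Reversing time gives the corresponding statement for $\Wu(M)$.
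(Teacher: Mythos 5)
Your proposal is genuinely different from the paper's proof, but it has a real gap at the step you yourself flag as ``the heart of the matter.''

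The reductions, the use of Lemma~\ref{lem:rho_0_construction}, the exhaustion $\Ws(M)=\bigcup_T\mathcal W_T$ with $\mathcal W_T=\Phi^{-T}[(\Psl)^{-1}(\Phi^T(M))]$, and the homotopy-invariance argument showing each $\mathcal W_T$ is a $C^k$ disk bundle isomorphic to $\widetilde E^s$ over $\id_M$ are all sound. What is not established is the splicing: you need, for each $j$, a $C^k$ disk-bundle automorphism $\gamma$ of $\widetilde E^s$ carrying $\theta_{(j+1)T_0}|_{\mathcal W_{j-1}}$ to $\beta_j|_{\mathcal W_{j-1}}$, and you must additionally be able to choose these $\gamma$'s so that the nested images $\beta_j(\mathcal W_{j-1})$ exhaust $\widetilde E^s$. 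You name this a parametrized isotopy-extension / uniqueness-of-tubular-neighborhood statement and gesture at the topological isotopy extension theorem and Brown's monotone-union theorem, but you do not prove it, and it is not a black-box citation: one needs an isotopy extension theorem that is \emph{fiberwise over $M$}, \emph{relative to the zero section}, and valid in regularity $C^k$ including $k=0$, and one must simultaneously retain enough freedom in $\gamma$ to force the exhaustion. None of the references you point to supplies this as stated, and the $C^0$ case (where the local bundle structure may be only H\"older and no isotopy can be generated by integrating a vector field) is precisely where the standard smooth argument breaks down. So as written the proof is incomplete.

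The paper sidesteps this entirely. Rather than splicing a sequence of partial trivializations, it builds a \emph{single} global $\rho\colon\widetilde E^s\to\Ws(M)$, $\rho(x)=\Phi^{-t(x)}\circ\rho_0\circ\Theta^{t(x)}\circ\xi(x)$, where the coherent ``radial coordinate'' you would need for your isotopies is supplied once and for all by a radially monotone Lyapunov function $V$ for the pulled-back flow (Proposition~\ref{prop:radially_monotone_lyap_funcs}), together with a family of $C^k$ radial retractions onto its level sets (Lemma~\ref{lem:radial_retraction}) and a level-set-preserving $C^k$ flow $\Theta^t$ (Lemma~\ref{lem:nonlin_par_transp}). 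Injectivity, properness, and hence surjectivity of $\rho$ are then proved directly, with no limit of automorphisms and no appeal to isotopy extension. The Lyapunov-function device is exactly what replaces the missing parametrized isotopy-extension step in your outline, and it works uniformly in $k\ge0$ because the Rademacher-based estimates in Proposition~\ref{prop:radially_monotone_lyap_funcs} make $V$ radially bi-Lipschitz even when $\Psl$ is merely continuous. If you want to salvage your approach you would need to prove the fiberwise isotopy-extension-with-exhaustion statement; the cleaner route is the Lyapunov one.

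Your final paragraph, reducing independence of the choice of $\widetilde E^s$ to fiberwise orthogonal projection between $C^0$-close $C^k$ subbundles, is correct and matches the paper's reasoning.
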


In other words, under the hypotheses of Theorem \ref{th:fiber_bundle_theorem}, the global stable foliation $\Ws(M)$ is actually a $C^k$ disk bundle.

\begin{Rem}\label{rem:whitney}
	Since $M$ is compact, the weak and strong Whitney topologies coincide \cite[Ch. 2]{hirsch1976differential}.
	In simpler terms \cite[p.72]{normallyHypMan}, $\tilde{E}^s$ approximates $E^s$ in the $C^0$ Whitney topology if there exists a sufficiently small $\epsilon > 0$ such that for every $m\in M$, there exists a neighborhood $U_m \subset M$ of $m$ and local frames $(e_i)_{i=1}^{n_s}$, $(\tilde{e}_i)_{i=1}^{n_s}$ for $E^s, \tilde{E}^s$ such that for all $m' \in U_m$: $\|e_i(m')-e_i(m')\| < \epsilon$.  
\end{Rem}

\begin{Rem}\label{rem:forced_smoothness_theorem_remark}
	Let us reiterate Remark \ref{rem:forced_smoothness_lemma_remark}.
	If $M$ is an $r$-NHIM for a $C^r$ vector field, then $M$ and $\Wsl(M)$ are automatically $C^r$ submanifolds of $Q$.
	That is, an invariant manifold $M$ being $r$-normally hyperbolic causes ``forced $C^r$ smoothness'' of $M$ and of the local and global stable manifolds $\Wsl(M)$ and $\Ws(M)$ \cite[Ch. 1]{eldering2013normally}.
	(Of course this is $C^r$ smoothness of $\Wsl(M)$ and $\Ws(M)$ as submanifolds, not as foliations.)
	We state Theorem \ref{th:fiber_bundle_theorem} for a $1$-NAIM $M$ which is also assumed to be a $C^r$ submanifold, in order to obtain a slight amount of extra generality.
\end{Rem}

\begin{Rem}
	The hypotheses required to prove the global linearization Theorems \ref{th:global_linearization} and \ref{th:smooth_inflowing_global_linearization} are much stronger than the hypotheses required to prove the fiber bundle Theorem \ref{th:fiber_bundle_theorem}. 
	See Remark \ref{rem:compare_linearization_bundle_theorems} for more details.	
\end{Rem}

\begin{Co}\label{co:fiber-bundle-NAIM-C0-case}
	Let $M\subset Q$ be a compact inflowing $1$-NAIM for the flow $\Phi^t$ generated by the $C^1$ vector field $f$ on $Q$. 
	Then $\Ps\colon \Ws(M) \to M$ defines a $C^0$ fiber bundle structure on $\Ws(M)$, isomorphic (as a disk bundle) to $E^s$.
\end{Co}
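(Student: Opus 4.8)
The plan is to obtain this as the $k = 0$, $r = 1$ specialization of Theorem~\ref{th:fiber_bundle_theorem}. First I would check that the hypotheses of that theorem are met. The manifold $M$ is given as a compact inflowing $1$-NAIM for the $C^1$ vector field $f$, which in particular makes it a $1$-NHIM; by the forced-smoothness fact recalled in Remark~\ref{rem:forced_smoothness_theorem_remark} (see \cite[Ch.~1]{eldering2013normally}), $M$ is then automatically a $C^1$ submanifold of $Q$, so the requirement ``$M\subset Q$ is a $C^r$ submanifold'' holds with $r = 1$. Next, the local projection $\Psl\colon\Wsl(M)\to M$ is continuous with no further assumptions, i.e.\ $\Psl\in C^0$, so the hypothesis ``$\Psl$ is $C^k$ with $0\leq k\leq r$'' holds with $k = 0\leq 1 = r$.

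Applying Theorem~\ref{th:fiber_bundle_theorem} with these choices immediately yields that $\Ps\colon\Ws(M)\to M$ defines a $C^0$ fiber bundle structure on $\Ws(M)$ with fiber $\R^{n_s}$, and that this bundle is $C^0$ isomorphic, as a disk bundle, to any $C^0$ vector bundle over $M$ approximating $E^s$ in the $C^0$ Whitney topology. To finish I would take this approximating bundle to be $E^s$ itself: $E^s$ is a continuous (hence $C^0$) subbundle of $\T Q|_M$, and in the sense of Remark~\ref{rem:whitney} it trivially approximates itself in the $C^0$ Whitney topology (its own local frames are $0$-close to themselves). Hence the disk bundle isomorphism $\Ws(M)\cong E^s$ follows.

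There is essentially no real obstacle: the corollary is a direct reading of Theorem~\ref{th:fiber_bundle_theorem} in the lowest-regularity case, and the only points that need to be made explicit are (i) invoking forced smoothness to meet the ``$C^r$ submanifold'' hypothesis from the bare ``$C^1$ vector field'' assumption, and (ii) noting that when $k = 0$ no smoothness of the approximating bundle is demanded, so $E^s$ may be used directly in place of a $C^k$ approximation $\widetilde{E}^s$.
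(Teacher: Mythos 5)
Your proposal is correct and takes essentially the same route as the paper: both simply verify that the hypotheses of Theorem~\ref{th:fiber_bundle_theorem} are met in the $k=0$, $r=1$ case ($\Psl\in C^0$ and $M\subset Q$ a $C^1$ submanifold automatically, by forced smoothness) and then apply that theorem, noting that $E^s$ itself is a valid $C^0$ approximating bundle. The paper's own proof is the same argument compressed into two sentences.
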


\begin{proof}
	As mentioned earlier, $\Psl, E^s, \Wsl(M) \in C^0$ is automatically satisfied for a compact inflowing $1$-NAIM.
	Hence the result follows from Theorem \ref{th:fiber_bundle_theorem}.
\end{proof}

\begin{Co}\label{co:fiber-bundle-NAIM-center-bunching}
		Let $M\subset Q$ be a compact inflowing $1$-NAIM for the flow $\Phi^t$ generated by the $C^r$ vector field $f$ on $Q$, and assume that $M\subset Q$ is a $C^r$ submanifold.
	    Additionally, assume that there exist constants $K>0$ and $\alpha < 0$ such that for all $m \in M$, $t \ge 0$ and $0 \le i \le k < r$ the \concept{$k$-center bunching} condition holds: 
	    \begin{equation}\label{eq:fiber-bundle-cor-center-bunching}
	    \norm{\D\Phi^{t}|_{T_mM}}^i\norm{\D\Phi^t|_{E_m^s}}\leq Ke^{\alpha t}\minnorm{\D\Phi^t|_{T_mM}}.
	    \end{equation}
	    Then $\Ps\colon \Ws(M) \to M$ defines a $C^k$ fiber bundle structure on $\Ws(M)$, isomorphic (as a disk bundle) to $E^s$.
\end{Co}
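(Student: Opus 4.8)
The plan is to deduce the Corollary directly from Theorem~\ref{th:fiber_bundle_theorem}: the only gap between the hypotheses of the Corollary and those of the Theorem is the assertion $\Psl \in C^k$, and the point is that the $k$-center bunching condition \eqref{eq:fiber-bundle-cor-center-bunching} is exactly the classical spectral-gap hypothesis guaranteeing $C^k$ regularity of the local stable foliation. Indeed, inequality \eqref{eq:fiber-bundle-cor-center-bunching} says that the contraction of $\D\Phi^t|_{E^s}$ dominates the $i$-th power of the tangential rates for every $0 \le i \le k$; this is precisely the condition under which the standard graph-transform / fiber-contraction argument --- applied to the bundle of $k$-jets of the leaves over $M$ --- produces a local stable foliation whose leaves are $C^k$ and depend $C^k$ on the base point, i.e.\ $\Psl \in C^k$ (see \cite{fenichel1974asymptotic,hirsch1977} and \cite[Ch.~1--2]{eldering2013normally}). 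First I would state this smoothness result with a precise reference, verifying that \eqref{eq:fiber-bundle-cor-center-bunching} matches --- after the harmless absorption of constants and the usual passage between the flow and a time-$T$ map --- the spectral condition used there. Note that $\Psl\in C^k$ also forces $\Wsl(M)$ to be a $C^k$ submanifold (as in Lemma~\ref{lem:rho_0_construction}).

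Granting $\Psl \in C^k$, all remaining hypotheses of Theorem~\ref{th:fiber_bundle_theorem} hold: $M$ is a compact inflowing $1$-NAIM which is a $C^r$ submanifold, the vector field is $C^r$, and $0 \le k < r$, so in particular $0\le k\le r$. The Theorem then yields at once that $\Ps\colon \Ws(M)\to M$ is a $C^k$ fiber bundle with fiber $\R^{n_s}$, and that $\Ws(M)$ is $C^k$-isomorphic as a disk bundle to any $C^k$ vector bundle $\widetilde E^s$ over $M$ which $C^0$-approximates $E^s$ in the Whitney topology (such $\widetilde E^s$ exists by \cite[p.~72 Prop.~3.2.3]{normallyHypMan}). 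To obtain the stated isomorphism with $E^s$ itself, I would finish with the standard observation that two vector bundles over a compact base which are sufficiently $C^0$-close are isomorphic as topological vector bundles --- the straight-line homotopy between $E^s$ and $\widetilde E^s$ (viewed, say, via the graph of a small section, or inside a fixed trivial bundle of large rank) remains transverse to a fixed complement and thus produces the isomorphism, which also follows from homotopy invariance of the Grassmannian classification of vector bundles. Composing, the underlying topological disk bundle of $\Ws(M)$ is isomorphic to $E^s$, as claimed; when $k=0$ one may of course take $\widetilde E^s = E^s$ directly, recovering Corollary~\ref{co:fiber-bundle-NAIM-C0-case}.

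The main obstacle is the first step --- citing or proving cleanly that $k$-center bunching implies $\Psl \in C^k$. The literature states the relevant spectral conditions in several slightly different forms (in terms of generalized Lyapunov-type exponents, or as $r$-normal hyperbolicity with a single exponent, or for the time-$T$ map $\Phi^T$ rather than the flow), and under both ``immediate'' and ``eventual'' relative normal hyperbolicity conventions; some care is therefore needed to confirm that our condition \eqref{eq:fiber-bundle-cor-center-bunching} genuinely implies the hypothesis actually invoked, and to reconcile it with the ``eventual'' convention adopted in \S\ref{sec:construct_global_foliation}. Once that point is settled, the rest is a direct application of Theorem~\ref{th:fiber_bundle_theorem} together with routine vector-bundle theory.
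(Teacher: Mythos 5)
Your overall strategy matches the paper's: reduce to Theorem~\ref{th:fiber_bundle_theorem} by establishing that the $k$-center bunching condition~\eqref{eq:fiber-bundle-cor-center-bunching} forces $\Psl\in C^k$, and then invoke the theorem. The paper does exactly this, citing Fenichel's asymptotic-phase theorem (\cite[Thm~5]{fenichel1977asymptotic}) for the regularity step and then stopping. The one place where your argument falls short is the final isomorphism with $E^s$.

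The precise issue: Fenichel's theorem, as the paper uses it, gives not only $\Psl\in C^k$ but also $E^s\in C^k$. You only claim the former. That omission matters, because Theorem~\ref{th:fiber_bundle_theorem} asserts a $C^k$ disk-bundle isomorphism of $\Ws(M)$ with a $C^k$ vector bundle $\widetilde E^s$ that $C^0$-approximates $E^s$; to get a $C^k$ isomorphism with $E^s$ \emph{itself}, the cleanest route is to take $\widetilde E^s = E^s$, which requires knowing $E^s\in C^k$. You instead compensate with the topological observation that $C^0$-close vector bundles over a compact base are isomorphic. That step is fine as far as it goes, but it only produces a $C^0$ (topological) vector bundle isomorphism $E^s\cong\widetilde E^s$; composing gives only a $C^0$ disk-bundle isomorphism $\Ws(M)\cong E^s$, whereas the corollary (read in parallel with Corollary~\ref{co:fiber-bundle-NAIM-C0-case}, where the isomorphism regularity matches the bundle regularity) is asserting a $C^k$ one. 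Note also that $\Psl\in C^k$ alone does not yield $E^s\in C^k$ for free --- identifying $E^s_m=\ker\D\Psl_m$ would at best give $E^s\in C^{k-1}$ --- so this really is a separate conclusion of Fenichel's theorem that needs to be cited. Once you add the observation $E^s\in C^k$, your whole second paragraph can be deleted and the proof collapses to the paper's two-line argument.
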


\begin{proof}
	It is shown in \cite[Thm 5]{fenichel1977asymptotic} that the condition \eqref{eq:fiber-bundle-cor-center-bunching} implies that $\Psl, E^s \in C^k$.
	The result then follows from Theorem \ref{th:fiber_bundle_theorem}.
\end{proof}

\begin{Co}\label{co:fiber_bundle_with_unstable}
    Assume now that $M$ is a general compact $r$-NHIM, rather than a $C^r$ inflowing $1$-NAIM as in Theorem \ref{th:fiber_bundle_theorem}, and assume that $\Psl \in C^{k}$.
	Then $\Ps\colon \Ws(M) \to M$ defines a $C^k$ fiber bundle structure on $\Ws(M)$, when $\Ws(M)$ is endowed with the structure of an immersed submanifold as described in \S \ref{sec:construct_global_foliation}.
	This bundle is isomorphic (as a disk bundle) to any $C^k$ vector bundle over $M$ which approximates $E^s$.
	
	Similarly for the unstable manifold $\Wu(M)$, if $\Pul \in C^k$.
\end{Co}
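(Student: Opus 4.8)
The plan is to reduce the statement to Theorem~\ref{th:fiber_bundle_theorem} by restricting attention to $\Ws(M)$ itself, exploiting the observation recorded in the discussion preceding Theorem~\ref{th:fiber_bundle_theorem}: a boundaryless NHIM is a NAIM for the flow restricted to its invariant global stable manifold.

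First I would set up the restricted problem. By forced $C^r$ smoothness \cite[Ch. 1]{eldering2013normally}, $M$ and $\Wsl(M)$ are $C^r$ submanifolds of $Q$, and, as constructed in \S\ref{sec:construct_global_foliation}, $\Ws(M)$ is a $C^r$ manifold on which $\Phi^t$ restricts to a flow defined for all $t\in\R$ (using invariance of $\Ws(M)$); inside this manifold, $M$ is a compact $C^r$ submanifold and $\Wsl(M)$ is an open, codimension-$0$ subset. Along $M$ one has $\T\Ws(M)|_M = \T M \oplus E^s$, this splitting is $\D\Phi^t$-invariant, and the right-hand inequality in \eqref{eq:NHIM-exp-rates} shows that $M$ is a compact $r$-NAIM --- in particular a $1$-NAIM --- for $\Phi^t|_{\Ws(M)}$; being boundaryless it is vacuously inflowing. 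Moreover $\Wsl(M)$, with its foliation by the leaves $\Wsl(m)$ and projection $\Psl$, is a local stable manifold with local stable foliation for $M$ regarded as a NAIM in $\Ws(M)$, so the standing hypothesis $\Psl \in C^k$ (with $k\le r$) is precisely what Theorem~\ref{th:fiber_bundle_theorem} requires; and substituting into \eqref{eq:global_stable_manifold_def} in this restricted setting reproduces $\Ws(M)$ together with exactly the immersed-submanifold $C^r$ structure of \S\ref{sec:construct_global_foliation}.

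Then I would apply Theorem~\ref{th:fiber_bundle_theorem} verbatim, with $\Ws(M)$ in the role of the ambient space. The only point needing attention is that $\Ws(M)$ is a priori only a $C^r$ manifold, rather than the $C^\infty$ Riemannian ambient space posited in Theorem~\ref{th:fiber_bundle_theorem}; this is harmless, because one may replace its $C^r$ structure by a compatible $C^\infty$ one (Whitney) and then choose any $C^\infty$ metric, and because in any case the proof of Theorem~\ref{th:fiber_bundle_theorem} --- which proceeds through Lemma~\ref{lem:rho_0_construction} and Proposition~\ref{prop:P_submersion} --- uses only the $C^r$ flow, the $C^k$ local projection, and a smooth auxiliary metric, never a generating vector field of regularity exceeding that of the flow. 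Theorem~\ref{th:fiber_bundle_theorem} then gives that $\Ps\colon \Ws(M)\to M$ is a $C^k$ fiber bundle which is $C^k$ disk-bundle isomorphic to any $C^k$ vector bundle over $M$ approximating $E^s$; here one uses that $E^s$, being a subbundle of $\T\Ws(M)|_M\subset \T Q|_M$, is literally the same bundle in both settings, so that, up to $C^k$ vector bundle isomorphism, the class of admissible approximating bundles is unchanged. Finally, the statement for $\Wu(M)$ follows by applying everything just proved to the time-reversed flow $\Phi^{-t}$, for which $M$ is again a compact $r$-NHIM with the roles of $E^s$ and $E^u$ interchanged; thus $\Wu(M)$ is the global stable manifold for $\Phi^{-t}$ and the hypothesis $\Pul\in C^k$ is the corresponding one.

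The main obstacle, insofar as there is one, is not conceptual but a matter of bookkeeping: carefully checking that the restricted data $(\Ws(M),\Phi^t|_{\Ws(M)},M,\Psl)$ meets the hypotheses of Theorem~\ref{th:fiber_bundle_theorem} word for word --- in particular handling the mild loss of regularity of the ambient manifold and of the generator of the restricted flow in the way just indicated, and confirming that the local and global stable manifolds and foliations of $M$ inside $\Ws(M)$ coincide with the objects $\Wsl(M)$, $\Ws(M)$, $\Psl$, $\Ps$ already in play.
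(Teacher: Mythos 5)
Your proof is correct and follows essentially the same route as the paper's one-line argument, which invokes exactly the observation you make: a boundaryless NHIM is a (vacuously inflowing) NAIM for the flow restricted to $\Ws(M)$, so Theorem~\ref{th:fiber_bundle_theorem} applies with $\Ws(M)$ playing the role of the ambient space, and time reversal handles $\Wu(M)$. You have simply spelled out the regularity bookkeeping (the $C^r\to C^\infty$ smoothing of the ambient manifold and the identification of the restricted local/global foliations with the original objects) that the paper leaves implicit.
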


\begin{proof}
	This follows immediately from Theorem \ref{th:fiber_bundle_theorem} and the remarks preceding it.
\end{proof}

\begin{Rem}\label{rem:too_many_corollaries}
	We leave it to the reader to formulate corollaries analogous to Corollaries \ref{co:fiber-bundle-NAIM-C0-case} and \ref{co:fiber-bundle-NAIM-center-bunching} for the case of general compact NHIMs.
\end{Rem}

We assume that $M\in C^r$ is an inflowing $1$-NAIM for the remainder of \S \ref{sec:global_foliation_is_bundle}, unless stated otherwise.

\subsection{Overview of the proof of Theorem \ref{th:fiber_bundle_theorem}}
\label{sec:overview_fiber_bundle_theorem}
(Recall that by a $C^k$ isomorphism of manifolds, we mean a homeomorphism if $k = 0$ and a $C^k$ diffeomorphism if $k\geq 1$.
A $C^k$ fiber bundle isomorphism is a $C^k$ isomorphism of manifolds which is also fiber-preserving, and a $C^k$ vector bundle isomorphism is a $C^k$ fiber bundle isomorphism which is linear on the fibers; see Appendix \ref{app:fiber_bundles}.)

By Lemma \ref{lem:rho_0_construction} and Remark \ref{rem:inflowing_NAIM_extension}, we have a $C^k$ isomorphism of fiber bundles $\rho_0\colon U\subset \widetilde{E}^s\to \Wsl(M)$, where $\widetilde{E}^s$ is a vector bundle approximating $E^s$ and $U \subset \widetilde{E}^s$ is open.
We will construct a global $C^k$ fiber-preserving isomorphism
$\rho\colon \widetilde{E}^s \to \Ws(M)$ using the local version
$\rho_0\colon U \to \Wsl(M)$, according to the following plan.
(It might be useful to first read Definition \ref{def:vector_bundles} and Example \ref{ex:how_to_show_its_a_bundle} from Appendix \ref{app:fiber_bundles}.)

First, we define a flow\footnote{ For simplicity of presentation, we henceforth ignore the fact that $\Psi^t$ and other ``flows'' that we subsequently define, such as $\Theta^t$, have possibly smaller domains of definition due to the fact that $M$ is only assumed inflowing invariant. 
The domains of these ``flows'' always contain an appropriate neighborhood of $M\times \R_{\geq 0}$, and we only flow backwards in time along one ``flow'' after flowing forward by an equal time along another appropriate ``flow''; as an example, consider equation \eqref{eq:rho_first_def}.
We will still call these objects ``flows'', and it should be clear what is meant when discussing such objects defined on a bundle over an inflowing invariant manifold.} $\Psi^t \coloneqq \rho_0^*(\Phi^t) = \rho_0^{-1}\circ \Phi^t\circ \rho_0$ on a neighborhood of $M$ contained in $U$.
Adapting a technique of \cite{pugh1970linearization}, we will find a $C^k$ Lyapunov function $V\colon\widetilde{E}^s\to [0,\infty)$ for $\Psi^t$, such that $V^{-1}(0)=M$, the sublevel set $U_c\coloneqq V^{-1}(-\infty, c)$ is positively invariant for all $0< c \leq 1$, and $V$ is strictly decreasing along trajectories starting in such level sets.
Furthermore, $V$ will be radially monotone (i.e., $V(\delta y) > V(y)$ if $\delta > 1$), and therefore it will have the nice property that any of its level sets intersect radial rays in each fiber $\widetilde{E}^s_m$ in precisely one point. 
This enables us to define a family of radial retractions $R_{c}\colon  \widetilde{E}^s\setminus M \to V^{-1}(c)$ onto level sets of $V$, and we will show that this family is $C^k$. 

We next construct a $C^k$ flow $\Theta^t$ on $\widetilde{E}^s \setminus M$ that preserves level sets of $V$ and covers $\Phi^t|_M$.
$\Theta^t(y)$ is defined to be $R_{V(y)}\circ \Pi^t(y)$, where the radial retraction family $R_c$ is as defined above, and $\Pi^t$ is the smooth linear parallel transport covering $\Phi^t|_M$, constructed in  Appendix \ref{app:linear-par-transp}.

The global $C^k$ isomorphism $\rho\colon  \widetilde{E}^s\to \Ws(M)$ is now constructed as follows.
First, to a point $x \in \widetilde{E}^s$ we assign a time $t(x)$ roughly proportional to the value $V(x) > 0$, but with $t \equiv 0$ on a neighborhood of $M$.
Next, we use the family of retractions $R_{c}$ to construct a (nonlinear) rescaling diffeomorphism that maps $\widetilde{E}^s$ diffeomorphically onto $U_1\coloneqq V^{-1}(-\infty, 1)$, with the image of $x$ denoted $\xi(x)\in U_1$.
Finally, we define $\rho$ by first flowing $\xi(x)$ forward by $\Theta_{t(x)}$, applying $\rho_0$, and then flowing backward in time by applying $\Phi^{-t(x)}$:
\begin{equation}\label{eq:rho_first_def}
\rho(x) = \Phi^{-t(x)} \circ \rho_0 \circ \Theta^{t(x)} \circ \xi(x).
\end{equation}
See Figure \ref{fig:fiber-bundle-proof}.
The map $\rho$ is $C^k$ and fiber-preserving by construction.
Properness of $\rho$ will follow from asymptotic stability of $M$, and this will in turn imply surjectivity of $\rho$.
The map $\rho$ will be injective on $V$ level sets since $x\mapsto t(x)$ will be constant on $V$ level sets.
Since $V$ is strictly decreasing along trajectories of $\Psi^t$ contained in $U_1$, it will follow that $\rho$ takes disjoint level sets of $V$ to disjoint subsets of $\Ws(M)$, so that $\rho$ will be injective.
Therefore $\rho$ is a homeomorphism since it is a continuous and closed bijection, so this will complete the proof if $k = 0$ --- if $k \geq 1$, a computation in the proof of Theorem \ref{th:fiber_bundle_theorem} in \S \ref{sec:proof_fiber_bundle_theorem} will show that $\D\rho$ is an isomorphism everywhere, completing the proof of Theorem \ref{th:fiber_bundle_theorem}.

The purpose of \S \ref{sec:preliminary_results} is to construct the technical devices $V$, $R_{c}$, and $\Theta^t$ that will be used in the proof of Theorem \ref{th:fiber_bundle_theorem}.
The idea behind the proof of Theorem \ref{th:fiber_bundle_theorem} is simple, but our constructions are careful in order to avoid the loss of degrees of differentiability of $\rho$.  

\begin{figure}
	\centering
	\def\svgwidth{1\columnwidth}
	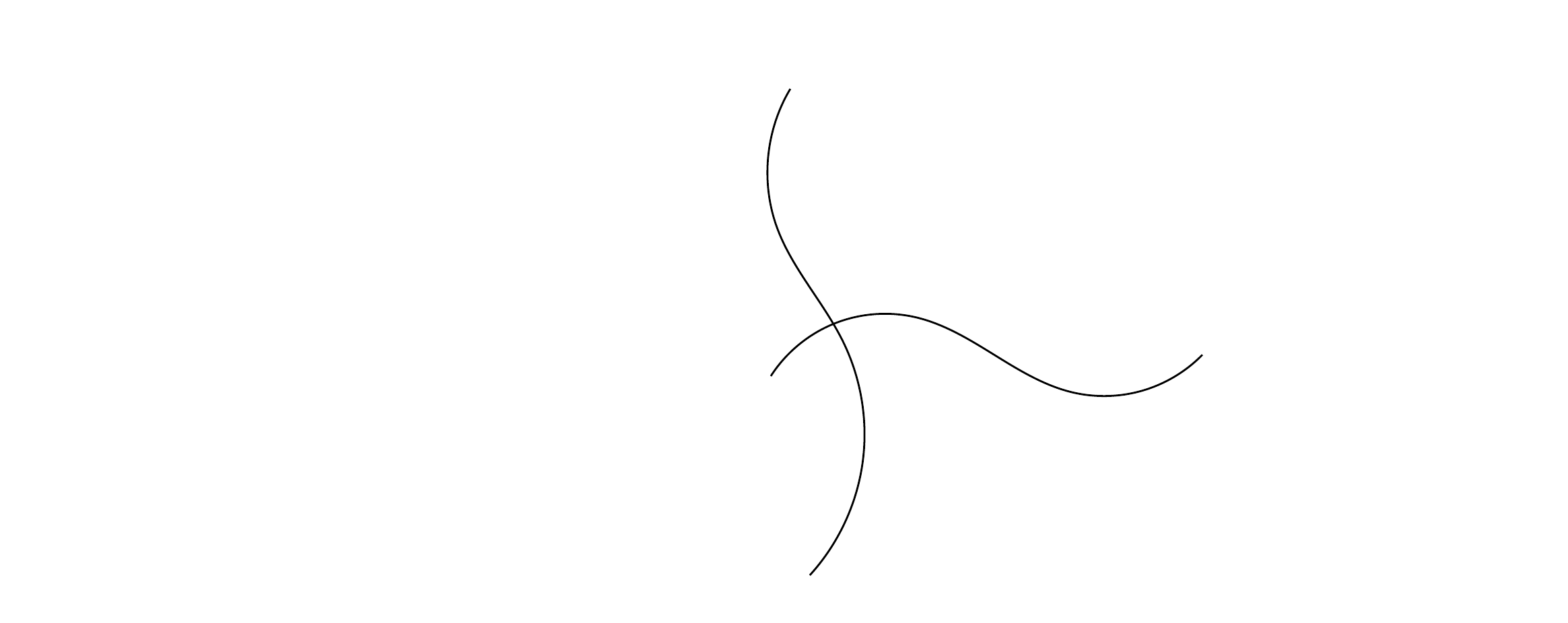
	\caption{An illustration of the proof of Theorem \ref{th:fiber_bundle_theorem}. The neighborhood $U\subset \widetilde{E}^s$ and its image $\rho_0(U)$ are bounded by dashed curves. The level set $V^{-1}(V(\xi(x))) \subset \widetilde{E}^s$ is depicted by the dotted curves.}\label{fig:fiber-bundle-proof}
\end{figure}

\subsection{Preliminary Results}
\label{sec:preliminary_results}

In order to carry out the proof of Theorem \ref{th:fiber_bundle_theorem}, we need some tools.
We will use the following result adapted from \cite{pugh1970linearization}; for the definition of a fiber metric, see Definition \ref{def:fiber_metric} in Appendix \ref{app:fiber_bundles}.

\begin{Prop}\label{prop:radially_monotone_lyap_funcs}
	Suppose that $M$ is an inflowing invariant manifold for the $C^r$ flow $\Phi^t$ on $Q$, and let $\pi\colon E\to M$ be a $C^r$ subbundle of $TQ|_M$ equipped with any fiber metric.
	Let $A^t$ be a continuous linear flow on $E$ such that the time one map is a uniform contraction on fibers:
	\begin{equation*}
	  \exists \alpha < 1\colon \forall m \in M\colon \norm{A^1|_{E_m}} \leq \alpha.
	\end{equation*}
	Let $\Psi^t$ be a $C^{k\geq 0}$ local flow with $\Psi$ defined at least on a set of the form $[0,1]\times E(\epsilon)$ for some $\epsilon > 0$, where $E(\epsilon)\coloneqq \{y \in E: \norm{y} \leq \epsilon\}$.
	Suppose that $\Psi^t$ also covers $\Phi^t$, leaves the zero section of $E$ invariant, and is Lipschitz close to $A^t$ for small $t$, by which we mean:
	\begin{equation}\label{eq:lipschitz_close}
	\forall 0 \leq t \leq 1, m \in M\colon   \Lip{(\Psi^t-A^t)|_{E_m}} \leq \mu <  \min{\left(\frac{1}{3}\kappa, 1-\alpha\right)},
	\end{equation} 
	for $\kappa\coloneqq    \inf\{\minnorm{A^t|_{E_m}}: m \in M, 0\leq t \leq 1\}$.
	Then there exists a continuous, nonnegative, and proper function $V\colon  E\to \R$ such that $V^{-1}(0)= M$ and\footnote{To limit excessive parentheses, here and henceforth we abuse notation by writing, e.g., $V^{-1}(a,b)$ instead of $V^{-1}((a,b))$, etc.}:
	\begin{enumerate}
		\item
		\label{item:lyap1} $V$ is radially monotone on $E$.
		For any $c > 0$, $V^{-1}(c)$ intersects each radial ray in exactly one point $y\in E$. 
		By a radial ray we mean any set of the form $\{\lambda x:\lambda>0\}$, where $x\in E$ is nonzero.
		
		\item
		\label{item:lyap2}
		For any $c$ with $0 < c \leq 1$, the sublevel set $V^{-1}(-\infty,c]$ is contained in $E(\epsilon)$ and is positively invariant under $\Psi^t$.

		\item 
		\label{item:lyap3}
		$V$ is radially bi-Lipschitz: there are constants $0 < b_1 < b_2 < 0$ such that for any $y \in E\setminus M$ and $\delta \neq 1$, we have the estimate
		\begin{equation}\label{eq:V_slope_bounds}
		 0< b_1\leq \frac{|V(\delta y)-V(y)|}{\norm{\delta y-y}} \leq b_2.
		\end{equation}
		
         \item
         \label{item:lyap4}
         If $\Psi \in C^{k\geq 1}$, then $V$ is $C^k$ on $E\setminus M$, and the derivative of $V$ along any trajectory of $\Psi$ starting in $V^{-1}(0,1)$ is strictly negative.
	\end{enumerate}
\end{Prop}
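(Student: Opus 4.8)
The plan is to adapt the construction of \cite{pugh1970linearization} to build $V$ from a fiber-metric ``norm-squared'' function that we average along trajectories of $\Psi^t$ in order to make it strictly decreasing. Concretely, I would first choose a $C^\infty$ fiber metric on $E$ and let $N(y) \coloneqq \norm{y}^2$ (squared fiber norm); since the time-one map $A^1$ is a uniform fiber contraction and $\Psi^t$ is Lipschitz-close to $A^t$ in the sense of \eqref{eq:lipschitz_close}, the constants $\alpha$, $\kappa$, $\mu$ can be used to show that for $y$ in a small tube $E(\epsilon)$ one has $N(\Psi^1(y)) \le \theta^2 N(y)$ for some $\theta < 1$, and moreover $N$ is comparable to $N\circ\Psi^t$ uniformly for $t\in[0,1]$ (using $\kappa$ and $\mu$ to bound $\minnorm{\D_{\text{fib}}\Psi^t}$ from below). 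Then I would define, for $y$ in the tube, a smoothed/averaged function
\begin{equation*}
W(y) \coloneqq \int_0^1 N(\Psi^s(y))\, ds,
\end{equation*}
which inherits the regularity of $\Psi$ (so $W\in C^k$ away from the zero section, since $N$ is smooth and $\Psi^s$ is $C^k$, and the $s$-integral does not cost derivatives — this is exactly where using $\Psi\in C^k$ and not more is important), and which is strictly decreasing along $\Psi$-trajectories because $\frac{d}{dt}W(\Psi^t(y)) = N(\Psi^{t+1}(y)) - N(\Psi^t(y)) < 0$ by the contraction estimate. That handles item \ref{item:lyap4} on the tube.

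Next I would arrange the radial monotonicity and the normalization $V^{-1}(0) = M$, $V^{-1}(-\infty,1]\subset E(\epsilon)$. The natural candidate, $V(y) \coloneqq \sup_{t\ge 0} \frac{W(\Psi^t(y))}{\text{(something)}}$, is delicate; instead I would follow Pugh--Shub-style reasoning and define $V$ on the tube by first taking $\sqrt{W}$ (to get something $1$-homogeneous-like and bi-Lipschitz in the radial direction, giving item \ref{item:lyap3} via two-sided bounds on $N$ coming from equivalence of fiber norms and a compactness/uniformity argument over $M$), then deforming radially so that a chosen level set is exactly $\partial E(\epsilon')$ for suitable $\epsilon'$, and then \emph{extending} $V$ to all of $E$ by pushing forward the level-set structure along radial rays: because $\sqrt{W}$ is strictly radially monotone near the zero section, each radial ray meets each of its level sets exactly once, and we can extend monotonically outward, scaling $V$ so that the outermost relevant level set of the tube becomes $\{V = 1\}$. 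This gives items \ref{item:lyap1} and \ref{item:lyap2}: radial monotonicity is built in by construction; sublevel sets $V^{-1}(-\infty,c]$ for $c\le 1$ lie in $E(\epsilon)$ by the normalization; and positive invariance of these sublevel sets follows because $W$ (hence $V$, on the tube) strictly decreases along $\Psi^t$, so a trajectory starting in $\{V\le c\}$ cannot cross the level set $\{V=c\}$ outward.

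The main obstacle I anticipate is the interface between the ``dynamical'' region near $M$ — where $V$ must come from the averaged Lyapunov function $W$ so that item \ref{item:lyap4} holds — and the ``outer'' region, where $V$ is defined purely by radial extension and need not decrease along $\Psi^t$ (indeed trajectories from the outer region flow inward and eventually enter the tube, which is all that is needed since we only claim strict decrease for trajectories \emph{starting in} $V^{-1}(0,1)$, i.e.\ already inside the tube). Making the two definitions glue to a single function that is simultaneously (a) $C^k$ off the zero section, (b) globally radially monotone, (c) proper, and (d) strictly decreasing along $\Psi$-trajectories in $V^{-1}(0,1)$ requires care: one must choose $\epsilon$ small enough that \eqref{eq:lipschitz_close} forces a genuine contraction of $N$ under $\Psi^1$, choose the cutoff level strictly inside the region where $W$ is both smooth and radially monotone, and verify the bi-Lipschitz bounds \eqref{eq:V_slope_bounds} uniformly in $m\in M$ using compactness of $M$ and continuity (only $C^0$) of $E$ as a subbundle. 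Properness is then immediate from $V^{-1}(-\infty,1]\subset E(\epsilon)$ together with radial monotonicity forcing $V(y)\to\infty$ as $\norm{y}\to\infty$ along each ray, with uniformity again from compactness of $M$. For the inflowing case one additionally notes, as in the footnote, that $\Psi^t$ need only be defined on a neighborhood of $M\times[0,1]$, which is exactly the domain hypothesis in the statement, so no extra argument is needed there.
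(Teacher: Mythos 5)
Your proposal and the paper's proof follow the same overall strategy, namely Pugh's averaged-Lyapunov-function trick adapted to a tubular neighborhood and then extended to all of $E$, so the route is essentially the same. That said, there are a few places where the paper supplies concrete constructions that you flag as ``requiring care'' but do not actually resolve, so the proposal is more a strategy outline than a proof.

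Two substantive remarks. First, the paper averages the fiber \emph{norm} rather than its square: $g(y) = \int_0^1 \norm{\Psi^t(y)}\,dt$. This makes $g$ approximately $1$-homogeneous from the start, so the radial bi-Lipschitz bound $0<\kappa-3\mu \le \frac{|g(\delta y)-g(y)|}{\norm{\delta y - y}} \le \alpha+\mu$ of \cite[Lem.~4.2]{pugh1970linearization} applies directly; your norm-squared-then-square-root detour recovers the same homogeneity but at the cost of extra algebra and does not gain anything. More importantly, the two-sided bound is \emph{not} just ``equivalence of fiber norms plus compactness'': the strictly positive lower bound $b_1>0$ hinges on the hypothesis $\mu<\tfrac13\kappa$ from \eqref{eq:lipschitz_close}, which forces the radial Lipschitz constant of each $\Psi^t|_{E_m}$ to be bounded below by $\kappa-3\mu>0$. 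Without invoking this explicitly there is no reason for $V$ to change at a definite rate along radial rays.

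Second, the extension from the tube to all of $E$ is the genuine technical work, and ``pushing forward the level-set structure along radial rays'' is not a construction you can just appeal to: one must verify that the result is $C^k$ off the zero section, that the bi-Lipschitz bounds \eqref{eq:V_slope_bounds} survive the gluing, and (when $k=0$) that the bounds can be obtained for a merely locally Lipschitz function. The paper handles all of this at once by choosing a smooth cutoff $\psi$ and setting $V = (1-\psi)\,g + \psi\,\beta\norm{\slot}$ with $\beta$ equal to an explicit upper bound on the radial Lipschitz constant of $g$; the radial derivative of $V$ then stays in a fixed interval $[b_1,b_2]$ by an explicit estimate, and the $k=0$ case of \eqref{eq:V_slope_bounds} is handled via Rademacher's theorem together with absolute continuity of $\delta\mapsto V(\delta y)$. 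Your proposal neither fixes the form of the outer extension nor addresses the almost-everywhere-differentiability issue, which is exactly where a naive extension can fail to be radially bi-Lipschitz with uniform constants.
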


\begin{Rem}\label{rem:Rademacher_explanation}
  In the proof of Proposition \ref{prop:radially_monotone_lyap_funcs} below, we make use of Rademacher's theorem \cite[Thm 3.1.6]{federer1968geometric}.
  This is to provide a unified proof for both the $C^{k \ge 1}$ and $C^0$ cases.
  In the $C^{k \ge 1}$ case we differentiate $V$ in the radial direction in order to obtain the inequalities \eqref{eq:V_slope_bounds}.
  This is not possible in the $C^0$ case, but condition \eqref{eq:lipschitz_close} implies that the function $V$ is locally Lipschitz, hence Rademacher's theorem implies that $V$ is differentiable almost everywhere in the measure-theoretic sense. This is sufficient for our purposes.

  Note that $\Psi^t$ is actually radially differentiable in the context of Theorem \ref{th:fiber_bundle_theorem}, even when $k = 0$.
  However, by using Rademacher's Theorem we simplify the statement of Proposition \ref{prop:radially_monotone_lyap_funcs}, while weakening its hypotheses.
\end{Rem}

\begin{proof}
	Define the function $g\colon E(\epsilon)\to \R$ by
	\begin{equation*}
	g(y)\coloneqq    \int_{0}^{1} \norm{\Psi^t(y)}dt.
	\end{equation*}
	In the proof of \cite[Thm~4.1]{pugh1970linearization} it is shown that $g$ is continuous, radially monotone, and that for each $0 < \mu' \leq \mu$, $g^{-1}(\mu' \epsilon)$ intersects each radial ray in exactly one point $y \in E(\epsilon)$.
	It also follows from the proof that corresponding sublevel sets $g^{-1}(-\infty,\mu' \epsilon]$ are positively invariant.
	It follows from the last inequality in the proof of \cite[Lem.~4.2]{pugh1970linearization} that for any $y \in E(\epsilon)$ and $\delta > 0$:
	\begin{equation*}
	0< (\kappa - 3 \mu) \leq \frac{|g(\delta y)-g(y)|}{\norm{\delta y-y}} \leq (\alpha + \mu),
	\end{equation*}
	where $\delta > 0$ is small enough that this expression is defined.
	Now let us assume that $k \ge 1$ and $\Psi\in C^k$ --- it is clear that $g$ is $C^k$ on the complement of the zero section.
	We compute
	\begin{align*}
	\frac{\partial}{\partial t}g\circ \Psi^t(y)=\frac{\partial}{\partial t} \int_{0}^{1}\norm{\Psi^{t+s}(y)}ds = \frac{\partial}{\partial t}\int_{t}^{t+1}\norm{\Psi^s(y)}ds = \norm{\Psi^{1}\Psi^t(y)}-\norm{\Psi^t(y)} < 0,
	\end{align*}
	with the last term being negative since $A^1$ is an $\alpha$-contraction and $\Psi^1$ is $\mu$-Lipschitz close to $A^1$, with $\mu + \alpha < 1$, hence $\Psi^1$ decreases the norm of $\Psi^t(y)$.
	
	Replacing $g$ by $g/(\mu\epsilon)$, we may assume that $g$ satisfies \ref{item:lyap1}, \ref{item:lyap2}, and \ref{item:lyap4}, and also
	\begin{equation}\label{eq:g_bounds}
	0< b_1 \leq \frac{|g(\delta y)-g(y)|}{\norm{\delta y-y}} \leq \beta
	\end{equation}
	 if we define $b_1 \coloneqq (\kappa-3\mu)/(\mu \epsilon)$ and $\beta\coloneqq (\alpha + \mu)/(\mu \epsilon)$.
	
	Now let $0 < \epsilon' < \epsilon$ be such that $g^{-1}(-\infty,1] \subset  E(\epsilon')\subset E(\epsilon)$.
	We are going to extend $g$ to a $C^k$ function $V:E\to \R$ such that $V|_{E(\epsilon')}=g|_{E(\epsilon')}$, with $V$ satisfying \ref{item:lyap1}, \ref{item:lyap2}, \ref{item:lyap3}, and \ref{item:lyap4}.
	Let $\chi\colon [0,\infty) \to [0,\infty)$ be a $C^\infty$ nonnegative, increasing function satisfying $\chi \equiv 0$ on $[0,\epsilon']$ and $\chi \equiv 1$ on $[\epsilon,\infty)$, and define $\psi\colon E\to \R$ via $\psi(y)\coloneqq \chi(\norm{y})$.
	We now define $V\colon E\to \R$ via
	\begin{equation}\label{eq:V_def}
	V\coloneqq (1-\psi)g +  \psi \beta \norm{\slot} ,
	\end{equation}
	with the understanding that $V(y) = \beta \norm{y}$ for $\norm{y} >\epsilon$.
	Clearly $V$ is continuous.
	By the definition of $\psi$, we see that $V$ is $C^k$ on $E\setminus M$ and  $V|_{E(\epsilon')}=g|_{E(\epsilon')}$, so that when we replace $\epsilon$ by $\epsilon'$ then \ref{item:lyap2} and \ref{item:lyap4} are automatically satisfied.
	Clearly \ref{item:lyap3} implies \ref{item:lyap1}, so it suffices to show that $V$ satisfies \ref{item:lyap3}.
	To do this, fix any $y \in E\setminus M$.
	By \eqref{eq:g_bounds}, the function $\delta \mapsto g(\delta y)$ is locally Lipschitz, and the same is true of the other functions in Equation \eqref{eq:V_def} defining $V$.
	Since $V$ is a sum of products of such functions, $V$ is also locally Lipschitz.
	Hence even if $k=0$, by Rademacher's theorem $\delta \mapsto V(\delta y)$ and $\delta \mapsto g(\delta y)$ are differentiable except at a set of Lebesgue measure zero.
	The following statements must be interpreted to hold almost everywhere in the Lebesgue measure sense.
	We obtain
	\begin{equation*}
	 \frac{\partial}{\partial \delta}V(\delta y) = [(1-\psi)\frac{\partial}{\partial \delta}g(\delta y) + \psi \beta\norm{y}] + (\beta\norm{\delta y}-g)\frac{\partial}{\partial \delta}\psi(\delta y),
	\end{equation*}
	where here and henceforth $g$ and $\psi$ are implicitly evaluated at $\delta y$.
	From this, we obtain the inequalities
	\begin{align*}
	[(1-\psi)\frac{\partial}{\partial \delta}g(\delta y) + \psi \beta\norm{y}] \leq \frac{\partial}{\partial \delta}V(\delta y) \leq [(1-\psi)\frac{\partial}{\partial \delta}g(\delta y) + \psi \beta\norm{y}] + \beta \norm{\delta y}	\frac{\partial}{\partial \delta}\psi(\delta y).
	\end{align*}
	The leftmost inequality was obtained using $\frac{\partial}{\partial \delta}\psi(\delta y) \geq 0$ and the fact that  Equation~\eqref{eq:g_bounds} implies that $\beta \norm{\delta y}\geq g(\delta y)$, and the rightmost inequality was obtained since $g(\delta y), \frac{\partial}{\partial \delta}\psi(\delta y) \geq 0$.
	Now Equation~\eqref{eq:g_bounds} implies that $b_1\norm{y}\leq \frac{\partial}{\partial \delta}g(\delta y) \leq \beta \norm{y}$ for $\delta y \in \supp (1-\psi)$, and $\beta \geq b_1$, so it follows that $b_1\norm{y} \leq [(1-\psi)\frac{\partial}{\partial \delta}g(\delta y) + \psi \beta\norm{y}] \leq \beta\norm{y}$.
	Consequently, we have
	\begin{equation}\label{eq:V_deriv_bounds}
	b_1\norm{y} \leq \frac{\partial}{\partial \delta}V(\delta y) \leq \beta\norm{y} + \beta \norm{\delta y}\frac{\partial}{\partial \delta}\psi(\delta y)
	= \beta [ 1 + \delta \frac{\partial}{\partial \delta}\psi(\delta y) ] \norm{y}.
	\end{equation}
  The derivative term can be rewritten into a radial derivative
  \begin{equation*}
    \delta \left.\frac{\partial}{\partial \rho}\psi(\rho y)\right|_{\rho=\delta}
    = \left.\frac{\partial}{\partial r}\psi(r \delta y)\right|_{r=1}
    \eqqcolon \psi'(\delta y),
  \end{equation*}
  which is zero for $\delta y \not\in E(\epsilon)$, and bounded inside the precompact set $E(\epsilon)$.
	Defining $b_2\coloneqq \beta [ 1 + \sup_{x \in E(\epsilon)} \psi'(x) ] < \infty$, we see that the right hand side of \eqref{eq:V_deriv_bounds} is bounded by $b_2\norm{y}$.
	
	The function $\delta \mapsto V(\delta y)$ is absolutely continuous since it is locally Lipschitz, so the fundamental theorem of Lebesgue integral calculus\footnote{This technicality is needed only for the case that the differentiability degree $k = 0$. If $k \geq 1$, the mean value theorem or the elementary fundamental theorem of calculus will suffice.} implies that for any $\delta > 1$,
	\begin{equation*}
	V(\delta y) -V(y) = \int_{1}^\delta \frac{\partial}{\partial s}V(sy)\, ds \geq b_1\norm{y} (\delta - 1) = b_1\norm{\delta y - y},
	\end{equation*}
	and a similar argument shows that $V(\delta y) - V(y)\leq b_2\norm{\delta y  - y}$, with $b_2$ defined as before. 
	This completes the proof.
\end{proof}

By Proposition \ref{prop:radially_monotone_lyap_funcs}, for each $c > 0$ we may define a retraction $R_c\colon  E\setminus M \to V^{-1}(c)$ by sliding along radial rays.
We then define $R\colon  (E\setminus M) \times (0,\infty)\to E$ by $R(\slot,c)\coloneqq    R_c$.

\begin{Lem}\label{lem:radial_retraction}
Let all notation be as in Proposition \ref{prop:radially_monotone_lyap_funcs} and let $R\colon  (E\setminus M) \times (0,\infty)\to E$ be as defined above.
Then $R \in C^k$.
\end{Lem}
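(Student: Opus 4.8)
The plan is to reduce $R$ to a single scalar function and then control that function by the implicit function theorem. For $y\in E\setminus M$ and $c>0$, the point $R(y,c)=R_c(y)$ is by construction the unique intersection of the radial ray $\{\lambda y:\lambda>0\}$ with the level set $V^{-1}(c)$; that this intersection is a single point is exactly item~\ref{item:lyap1} of Proposition~\ref{prop:radially_monotone_lyap_funcs}. Hence we may write $R(y,c)=\lambda(y,c)\cdot y$, where $\lambda(y,c)>0$ is the unique solution of $V(\lambda y)=c$. Since scalar multiplication $(0,\infty)\times(E\setminus M)\to E\setminus M$ is the restriction of the $C^r$ scalar multiplication of the vector bundle $E$, it suffices to show that $\lambda\colon(E\setminus M)\times(0,\infty)\to(0,\infty)$ is $C^k$, from which $R\in C^k$ follows by composition.

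First I would record, for a fixed $y\in E\setminus M$, the behavior of the one-dimensional slice $\lambda\mapsto V(\lambda y)$. Radial monotonicity (item~\ref{item:lyap1}) together with estimate~\eqref{eq:V_slope_bounds} of item~\ref{item:lyap3}, applied with base point $\mu y$ and ratio $\delta=\lambda/\mu$, shows that $b_1 |\lambda-\mu|\,\norm{y}\le |V(\lambda y)-V(\mu y)|\le b_2 |\lambda-\mu|\,\norm{y}$ for all $\lambda,\mu>0$, so this slice is a strictly increasing homeomorphism of $(0,\infty)$ onto $(0,\infty)$ (surjectivity onto $(0,\infty)$ uses $V(\lambda y)\to0$ as $\lambda\to0^+$, since $\lambda y\to0\in E_{\pi(y)}$ and $V^{-1}(0)=M$ with $V$ continuous, and $V(\lambda y)\to\infty$ as $\lambda\to\infty$ by the lower bound). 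This confirms $\lambda(y,c)$ is well defined. Next I would prove joint continuity of $\lambda$, which already finishes the proof when $k=0$: given $(y_n,c_n)\to(y,c)$, write $\lambda_n\coloneqq\lambda(y_n,c_n)$ and $\lambda_\infty\coloneqq\lambda(y,c)$; a triangle inequality combined with the lower estimate above gives $b_1 |\lambda_n-\lambda_\infty|\,\norm{y_n}\le |c_n-c|+|V(\lambda_\infty y_n)-V(\lambda_\infty y)|$, whose right-hand side tends to $0$ by continuity of $V$; since $\norm{y_n}\to\norm{y}>0$ this forces $\lambda_n\to\lambda_\infty$, hence $R(y_n,c_n)=\lambda_n y_n\to\lambda_\infty y=R(y,c)$.

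For $k\ge1$ I would upgrade continuity to $C^k$ smoothness by the implicit function theorem. By item~\ref{item:lyap4} of Proposition~\ref{prop:radially_monotone_lyap_funcs}, $V$ is $C^k$ on $E\setminus M$, so $F(y,\lambda,c)\coloneqq V(\lambda y)-c$ is a $C^k$ function on $(E\setminus M)\times(0,\infty)\times(0,\infty)$, the map $(y,\lambda)\mapsto\lambda y$ being $C^r$ and landing in $E\setminus M$. The nondegeneracy needed is supplied by the radial derivative bound~\eqref{eq:V_deriv_bounds}, which yields $\partial_\lambda F(y,\lambda,c)=\frac{\partial}{\partial\lambda}V(\lambda y)\ge b_1\norm{y}>0$ everywhere. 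Working in a vector bundle chart $E|_U\cong U\times\R^{n_s}$, in which $\lambda$ is the dilation coordinate, the implicit function theorem then presents the zero set of $F$ locally as the graph of a $C^k$ function of $(y,c)$; by uniqueness of $\lambda(y,c)$ these local solutions agree with $\lambda$, so $\lambda\in C^k$ and therefore $R\in C^k$.

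The argument is mostly bookkeeping, and I expect the only genuinely delicate point to be the $k=0$ case, where the implicit function theorem does not apply and continuity of $R$ must be teased out directly from the two-sided radial estimate~\eqref{eq:V_slope_bounds}; in the $C^{k\ge1}$ case everything hinges on recognizing the strictly positive lower bound on the radial derivative of $V$ as precisely the hypothesis of the implicit function theorem.
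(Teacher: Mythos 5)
Your decomposition $R(y,c)=\lambda(y,c)\,y$ and your $k\ge1$ argument match the paper's proof essentially verbatim: both apply the implicit function theorem to $F(y,\lambda,c)=V(\lambda y)-c$, using the strictly positive lower bound on the radial derivative of $V$ (coming from item~\ref{item:lyap3}) to furnish nondegeneracy.

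For $k=0$ you take a genuinely different route. The paper manufactures an explicit contraction
$T_{x,c}(\delta)=\delta-\frac{1}{b_2}\,\frac{V(\delta x)-c}{\norm{x}}$,
shows it has uniform contraction constant $1-b_1/b_2<1$, and then invokes continuous parameter-dependence of fixed points of a uniform family of contractions (a ``Lipschitz implicit function theorem''). You instead argue sequential continuity of $\lambda(y,c)$ directly: applying the two-sided radial estimate~\eqref{eq:V_slope_bounds} along the ray through $y_n$ and the triangle inequality, you obtain
$b_1\,|\lambda_n-\lambda_\infty|\,\norm{y_n}\le |c_n-c|+|V(\lambda_\infty y_n)-V(\lambda_\infty y)|$,
and the right-hand side vanishes by continuity of $V$ while $\norm{y_n}\to\norm{y}>0$, forcing $\lambda_n\to\lambda_\infty$. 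Your argument is correct and is arguably more elementary — it extracts exactly what is needed (continuity) from the bi-Lipschitz bounds without setting up a fixed-point scheme, at the cost of not producing the quantitative uniform contraction constant that the paper's construction yields as a by-product. Either approach proves the lemma; since only continuity is needed downstream, nothing is lost by your shortcut.

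Two small bookkeeping notes. First, your lower bound on $\partial_\lambda F$ cites~\eqref{eq:V_deriv_bounds}, which is an intermediate display inside the proof of Proposition~\ref{prop:radially_monotone_lyap_funcs} rather than one of its stated conclusions; you should instead derive the bound from the advertised conclusion~\eqref{eq:V_slope_bounds} via the mean value theorem (as the paper does), or note that the two are equivalent. Second, the surjectivity observation ($V(\lambda y)\to0$ as $\lambda\to0^+$, $V(\lambda y)\to\infty$ as $\lambda\to\infty$) is correct but already packaged inside item~\ref{item:lyap1} of the Proposition, so you could simply cite it.
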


\begin{proof}
	If $k \geq 1$, then $V\in C^{k \geq 1}$ and Equation \eqref{eq:V_slope_bounds} together with the mean value theorem imply that the derivative of $V$ in the radial direction is nonzero.
	We may therefore apply the implicit function theorem to the function $F(\delta,x,c)\coloneqq   V(\delta x) - c$, defined on $(0,\infty)\times (E\setminus M)\times (0,\infty)$, to obtain a $C^{k}$ $\R$-valued function $\delta(x,c)$ such that $V(\delta(x,c)x) = c$.
	It follows that $R(x,c) = \delta(x,c)x$, and therefore $R \in C^{k}$.

	If $k = 0$, we will make use of a different argument which is effectively a ``Lipschitz implicit function theorem''.
	The argument is sketched as follows.
	We will define an auxiliary $C^0$ map $T\coloneqq (0,\infty)\times (E\setminus M)\times (0,\infty)\to \R$ such that $T_{x,c}\coloneqq T(\slot,x,c)$ has a unique fixed point given by $\delta(x,c)$, and additionally such that $T_{x,c}$ is a contraction mapping. 
	The domain of each $T_{x,c}$ is not a complete metric space, but the existence of the fixed point of each $T_{x,c}$ will follow from Proposition \ref{prop:radially_monotone_lyap_funcs} point \ref{item:lyap1}, and these fixed points $R_c(x)$ are unique since $T_{x,c}$ is a contraction.
	 The theorem then follows from the general fact that the fixed points $R_c(x)$ of a continuous family $T_{x,c}$ of contractions depends continuously on the parameters $(x,c)$.  
	
	We now proceed with the proof.
	Define a continuous function $T$ by
	\begin{equation*}
	T_{x,c}(\delta)\equiv T(\delta,x,c)\coloneqq   \delta - \frac{1}{b_2}\frac{V(\delta x)-c}{\norm{x}}
	\end{equation*}
	on $(0,\infty)\times (E\setminus M)\times (0,\infty)$, where $b_2$ is as in Proposition \ref{prop:radially_monotone_lyap_funcs}.
	We already know from Proposition \ref{prop:radially_monotone_lyap_funcs} that for each $x$ and $c$, $T_{x,c}$ has a unique fixed point $\delta(x,c)$.
	$T_{x,c}$ is a contraction uniformly in $x$ and $c$ since 
	\begin{align*}
	T_{x,c}(\delta_1)-T_{x,c}(\delta_2) &= \delta_1 - \delta_2 -\frac{1}{b_2 }\frac{V(\delta_1 x) - V(\delta_2 x)}{\norm{x}}
	\\
	&= \left(1-\frac{1}{b_2}\frac{V(\delta_1 x)-V(\delta_2 x)}{(\delta_1-\delta_2)\norm{x}}\right)(\delta_1-\delta_2),
	\end{align*} 
	so that by Equation \eqref{eq:V_slope_bounds} we have
	\begin{align*}
	|T_{x,c}(\delta_1)-T_{x,c}(\delta_2)|\leq k|\delta_1 - \delta_2|,
	\end{align*}
	where $k\coloneqq   \left(1-\frac{b_1}{b_2}\right)<1$.
	It follows that the fixed point $\delta(x,c)$ depends continuously on $(x,c)$, since it is a general fact that the fixed points of a (uniform) family of contractions $T_{x,c}$ depend continuously on the parameters $(x,c)$.
	Since $V(\delta(x,c)x) = c$, it follows that $R(x,c) = \delta(x,c)x$, and therefore $R \in C^0$.
	This completes the proof.
\end{proof}

\begin{Lem}[Nonlinear parallel transport]\label{lem:nonlin_par_transp}
    Let all notation be as in Proposition \ref{prop:radially_monotone_lyap_funcs}.
    Then there exists a $C^k$ flow $\Theta^t$ on $E$ such that $\Theta$ covers the base flow and preserves level sets of $V$:
	\begin{equation*}
	\forall t\colon V \circ \Theta^t = V.
	\end{equation*}
\end{Lem}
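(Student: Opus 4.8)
The plan is to assemble $\Theta^t$ from two objects already at hand: the smooth linear parallel transport $\Pi^t$ on $E$ covering the base flow $\Phi^t|_M$, from Appendix~\ref{app:linear-par-transp}, and the radial retraction family $R_c$ from Lemma~\ref{lem:radial_retraction}. The transport $\Pi^t$ already covers $\Phi^t|_M$ but destroys the level sets of $V$; the correction is to slide each transported point back, within its own fiber and along its radial ray, onto the level set from which it started. Thus for $y \in E \setminus M$ I set
\[
\Theta^t(y) \coloneqq R_{V(y)}\bigl(\Pi^t(y)\bigr),
\]
and $\Theta^t(m) \coloneqq \Phi^t(m)$ for $m$ in the zero section $M$. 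This is well posed since $V(y) > 0$ for $y \notin M$, and $\Theta^t(y)$ lies in the fiber over $\Phi^t(\pi(y))$ because $\Pi^t$ and each $R_c$ are fiber-preserving. As in \S\ref{sec:overview_fiber_bundle_theorem}, I suppress the fact that, $M$ being only inflowing, these maps are defined only on a suitable neighborhood of $M \times \{t \ge 0\}$.

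Three things then need to be verified. First, $\Theta^t$ covers $\Phi^t|_M$ (immediate, since $\Pi^t$ does and $R_c$ is fiber-preserving) and preserves level sets of $V$ (immediate, since $R_{V(y)}$ takes values in $V^{-1}(V(y))$ by Proposition~\ref{prop:radially_monotone_lyap_funcs}, item~\ref{item:lyap1}). Second, $\Theta^t$ is a flow: $\Theta^0 = \id$ because $R_{V(y)}$ fixes $y$ (uniqueness of the intersection of the ray through $y$ with $V^{-1}(V(y))$), and the cocycle identity $\Theta^{t+s} = \Theta^t \circ \Theta^s$ holds because $R_{V(y)}(\Pi^s(y))$ is a positive scalar multiple $\lambda\,\Pi^s(y)$ of $\Pi^s(y)$, so that $\Pi^t(\Theta^s(y)) = \lambda\,\Pi^{t+s}(y)$ by linearity of $\Pi^t$ on fibers; since $R_c$ depends only on the radial ray of its argument, while $V(\Theta^s(y)) = V(y)$ by the previous point, this gives $\Theta^t(\Theta^s(y)) = R_{V(y)}(\lambda\,\Pi^{t+s}(y)) = R_{V(y)}(\Pi^{t+s}(y)) = \Theta^{t+s}(y)$; on $M$ the identity reduces to the flow property of $\Phi^t|_M$. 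Third, regularity: on $E \setminus M$ the assignment $(t,y) \mapsto \Theta^t(y) = R\bigl(\Pi^t(y), V(y)\bigr)$ is a composition of the $C^k$ map $R$ (Lemma~\ref{lem:radial_retraction}), the smooth flow $\Pi^t$, and $V$, which is $C^k$ on $E\setminus M$ for $k \ge 1$ (Proposition~\ref{prop:radially_monotone_lyap_funcs}, item~\ref{item:lyap4}) and trivially continuous for $k = 0$; hence $\Theta \in C^k$ on $E \setminus M$. Continuity across the zero section follows from the lower bound $V(z) \ge b_1\norm{z}$ implied by Proposition~\ref{prop:radially_monotone_lyap_funcs}, items~\ref{item:lyap1} and~\ref{item:lyap3}: as $y \to m \in M$ we have $V(y) \to 0$, and since $\norm{\Theta^t(y)} \le V(\Theta^t(y))/b_1 = V(y)/b_1$, it follows that $\Theta^t(y) \to \Phi^t(m) = \Theta^t(m)$.

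I expect the flow property to be the step requiring the most care: post-composing a flow with a fiberwise retraction is not a flow in general, and the argument above genuinely uses all of (a) linearity of $\Pi^t$ on fibers, (b) dependence of each $R_c$ only on radial rays, and (c) the $V$-invariance of $\Theta$ just established — dropping any one of these breaks the cocycle identity. The only other subtlety is not losing differentiability, which is precisely why $R$ was arranged to be $C^k$ (rather than merely continuous) in Lemma~\ref{lem:radial_retraction} and $V$ to be $C^k$ off $M$ in Proposition~\ref{prop:radially_monotone_lyap_funcs}; the failure of $\Theta^t$ to be $C^k$ across $M$ for $t \ne 0$ is harmless, since in the proof of Theorem~\ref{th:fiber_bundle_theorem} one only ever evaluates $\Theta^{t(x)}$ with $t(x) \equiv 0$ near $M$.
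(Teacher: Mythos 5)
Your construction $\Theta^t(y) = R_{V(y)}\circ\Pi^t(y)$ is exactly the paper's, built from the same two ingredients (the smooth linear transport $\Pi^t$ of Appendix~\ref{app:linear-par-transp} and the $C^k$ retraction family $R_c$ of Lemma~\ref{lem:radial_retraction}), and your verification of the cocycle identity via ray-preservation of $\Pi^t$ spells out what the paper leaves as a one-line remark. The paper additionally records that $\Theta^t$ is a $C^k$ isomorphism onto its image, exhibiting the inverse $x\mapsto R_{V(x)}\circ\Pi^{-t}(x)$, while it does not explicitly extend the formula across the zero section as you do via the bound $V \ge b_1\norm{\slot}$; both points are minor, and your proof is correct.
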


\begin{proof}
	Let $\Pi^t$ be any $C^r$ linear parallel transport covering $\Phi^t$ as in Lemma \ref{lem:smooth_parallel_transport} (see Appendix \ref{app:linear-par-transp}).
	We define $\Theta^t$ for $t > 0$ by flowing $x$ forward via the linear flow $\Pi^t$ and then projecting onto the $V(x)$ level set of $V$:
	\begin{equation*}
	\Theta^t(x)\coloneqq   R_{V(x)}\circ \Pi^t(x). 
	\end{equation*}
	It follows from Lemma \ref{lem:radial_retraction} that $\Theta \in C^k$.
	Since for each $t$ the linear flow $\Pi^t$ maps radial rays into radial rays,
	it follows that $\Theta^t$ is injective for fixed $t\geq 0$ and also that $\Theta$ indeed satisfies the group property. 
    
    \linelabel{R2_5}By Lemma \ref{lem:smooth_parallel_transport} in Appendix \ref{app:linear-par-transp}, $\Pi^t$ is a $C^r$ linear isomorphism for each $t > 0$, and $R$ is $C^k$ by Lemma \ref{lem:radial_retraction}.
    Using the fact that $\Pi^t$ preserves radial rays, it follows  that the map $x \mapsto R_{V(x)} \circ \Pi^{-t}(x)$ defined on $\Pi^t(E)$ is a $C^k$ inverse for $\Theta^t$, so $\Theta^t$ is a $C^k$ isomorphism onto its image.
    
\end{proof}

\subsection{The proof of Theorem \ref{th:fiber_bundle_theorem}}\label{sec:proof_fiber_bundle_theorem}

Now we start the proof that $\Ws(M)$ is a fiber bundle isomorphic to $\widetilde{E}^s$ over the inflowing NAIM $M$.
(For the reader new to fiber bundles, see Appendix \ref{app:fiber_bundles} and in particular Example \ref{ex:how_to_show_its_a_bundle}).

\begin{proof}[Proof of Theorem \ref{th:fiber_bundle_theorem}]
	
Let $\rho_0\colon  U\subset \widetilde{E}^s\to \Wsl(M)$ be the $C^k$ fiber-preserving isomorphism constructed using Lemma \ref{lem:rho_0_construction} and Remark \ref{rem:inflowing_NAIM_extension}.
With Proposition \ref{prop:radially_monotone_lyap_funcs} in mind, we define a $C^k$ local flow $\Psi^t$ on $U$ and a global $C^{\max\{k-1,0\}}$ linear flow $A^t$ on $\widetilde{E}^s$ as follows:

\begin{equation}
\Psi^t \coloneqq \rho_0^*(\Phi^t) = \rho_0^{-1}\circ \Phi^t\circ \rho_0, \qquad
\forall m \in M\colon   A^t|_{\widetilde{E}^s_m}\coloneqq    [\D (\rho_0|_{\widetilde{E}^s_{\Phi^t(m)}})]^{-1} \circ \D \Phi^t \circ \D(\rho_0|_{\widetilde{E}^s_m}),
\end{equation}
for all $t > 0$.
Here we are viewing $\D(\rho_0|_{\widetilde{E}^s_m})$ as a map $\widetilde{E}^s_m\to E^s_m$ via the canonical linear identification $\T_0\widetilde{E}^s_m \cong \widetilde{E}^s_m$.
Note that by compactness of $M$, the linear flow $A^t$ is eventually uniformly contracting relative to the fiber metric (see Def. \ref{def:fiber_metric} in Appendix \ref{app:fiber_bundles}) on $\widetilde{E}^s$ induced by the Riemannian metric on $TQ$: i.e., there exists $t_0 > 0$ and $0 \leq \alpha < 1$ such that
\begin{equation*}
	\forall m \in M\colon \norm{A^{t_0}|_{E_m}} \leq \alpha.
\end{equation*}

Furthermore, even if $k = 0$, the restrictions $\rho_0|_{\widetilde{E}^s_m}$ of $\rho_0$ to individual linear fibers of $\widetilde{E}^s$ are smooth (see, e.g., \cite[Thm~1, Thm~4.1]{fenichel1974asymptotic,hirsch1977}). 
It follows that $\Psi^t|_{\widetilde{E}^s_m}$ is smooth, and
\begin{equation}
\forall m \in M\colon   \D (\Psi^t|_{\widetilde{E}^s_m})_0 = A^t|_{\widetilde{E}^s_m}.
\end{equation}
This is because the restrictions $\rho_0|_{\widetilde{E}^s_m}$ of $\rho_0$ to individual linear fibers of $\widetilde{E}^s$ are smooth. 
By NHIM theory (see~\cite[p.~191, (2.4)]{pugh1970linearization}), we also have that the map $(m,y) \mapsto \D (\rho_0|_{\widetilde{E}^s_m})_y$ is uniformly continuous at the zero section in the sense that $\D (\rho_0|_{\widetilde{E}^s_{m'}})_y$ tends uniformly to $\D (\rho_0|_{\widetilde{E}^s_m})_0$ as $m' \to m$ and $\norm{y} \to 0$.

It therefore follows in either case ($k >0$ or $k = 0$), possibly after a rescaling of time, that $\Psi^t$ and $A^t$ satisfy the hypotheses of Proposition \ref{prop:radially_monotone_lyap_funcs} on some uniform neighborhood of the zero section --- the Lipschitz condition hypothesis in Proposition \ref{prop:radially_monotone_lyap_funcs} follows from the preceding sentence, see also~\cite[p.~191, (2.4)(b')]{pugh1970linearization}.
Hence we obtain a radially monotone function $V\colon\widetilde{E}^s \to \R$ as in Proposition \ref{prop:radially_monotone_lyap_funcs}, and the corresponding $C^k$ family of radial retractions $R_c\colon  \widetilde{E}^s\setminus M\to V^{-1}(c)$ with $0 < c < \infty$ as in Lemma \ref{lem:radial_retraction}.
As in Lemma \ref{lem:nonlin_par_transp}, we also obtain a $C^k$ flow $\Theta^t$ defined on $E\setminus M$, covering $\Phi^t$, and preserving level sets of $V$. 
For the sake of notation, for any $0< c <\infty$ we henceforth let $U_c$ denote the sublevel set $V^{-1}(-\infty,c)$.

We next define the following smooth functions. Let
$\chi \in C^\infty\bigl([0,\infty);[0,1)\bigr)$ be a global
diffeomorphism such that $\chi(\delta) = \delta$ for $\delta \le \frac{1}{2}$ and
$\chi'(\delta) \in (0,1)$ for $\delta > \frac{1}{2}$. Secondly, define
$\tau(\delta) \coloneqq    \delta - \chi(\delta)$. Hence we have
$\tau \in C^\infty\bigl([0,\infty);[0,\infty)\bigr)$ with $\tau(\delta) = 0$ for $\delta \le \frac{1}{2}$ and
$\tau'(\delta) > 0$ for $\delta > \frac{1}{2}$. Thus $\tau$ restricted to
$(\frac{1}{2},\infty)$ is a diffeomorphism onto $(0,\infty)$. 

Finally, we construct the global fiber bundle isomorphism
$\rho\colon \widetilde{E}^s \to \Ws(M)$ as follows. 
Let $x \in \widetilde{E}^s_m$ at the base point $m\in M$.
Define the
rescaled $\xi(x) \coloneqq    R_{\chi(V(x))}(x)$ if $\norm{x} \neq 0$ and $\xi(x)=x$ otherwise.
Note that
$\xi(x) \in U_1 \subset \widetilde{E}^s$ is $C^k$ dependent on $x$ by Lemma \ref{lem:radial_retraction}, and $\xi$ is a $C^k$ isomorphism since its $C^k$ inverse is given by $y\mapsto R_{\chi^{-1}(V(y))}(y)$.
Secondly, define $t(x) \coloneqq    \tau(V(x))$ if $\norm{x}\neq 0$ and $t(x) = 0$ otherwise, and note that $t$ is $C^k$ dependent on $x$ since $\tau(\delta) = 0$ for $\delta \le \frac{1}{2}$. 
Now define
\begin{equation}\label{eq:global-fiber-iso}
  \rho\colon \widetilde{E}^s \to \Ws(M), \qquad
  \rho(x) = \Phi^{-t(x)} \circ \rho_0 \circ \Theta^{t(x)} \circ \xi(x).
\end{equation}
By construction it is clear that $\rho$ is a $C^k$ fiber-preserving map
covering the identity on $M$.

We now show that $\rho$ is injective.
First, note that $\rho$ restricted to any level set of $V$ is injective since the function $t \mapsto t(x)$ is constant on such level sets by construction, and for any fixed $t_0\geq 0$ the map $x \mapsto \Phi^{-t_0} \circ \rho_0 \circ \Theta^{t_0} \circ \xi(x)$ is a $C^k$ isomorphism. 
Hence it suffices to show that $\rho(V^{-1}(a))\cap\rho(V^{-1}(b)) = \varnothing$ for any $a \neq b$, $a,b > 0$.
Let $t_1 = t(V^{-1}(a))$, $t_2 = t(V^{-1}(b))$, and assume without loss of generality that $b > a$ and hence $t_2 > t_1$.
The following are equivalent statements:
\begin{alignat*}{2}
&&
\rho(V^{-1}(a)) &\cap \rho(V^{-1}(b)) = \varnothing \\
&\Longleftrightarrow\qquad &
\Phi^{-t_1}    \circ \rho_0 \circ \Theta^{t_1}\circ \xi\bigl(V^{-1}(a)\bigr) &\cap
\Phi^{-t_2}    \circ \rho_0 \circ \Theta^{t_2}\circ \xi \bigl(V^{-1}(b) \bigr) = \varnothing \\
&\Longleftrightarrow\qquad &
\Phi^{t_2-t_1} \circ \rho_0 \circ \Theta^{t_1}\circ \xi\bigl(V^{-1}(a)\bigr) &\cap
\rho_0 \circ \Theta^{t_2}\circ \xi\bigl(V^{-1}(b)\bigr) = \varnothing \\
&\Longleftrightarrow\qquad &
\Psi^{t_2-t_1}      \circ \Theta^{t_1}\circ \xi\bigl(V^{-1}(a)\bigr) &\cap
\Theta^{t_2}\circ \xi\bigl(V^{-1}(b)\bigr) = \varnothing \\
&\Longleftrightarrow\qquad &
\Psi^{t_2-t_1}\circ \xi \bigl(V^{-1}(a)\bigr) &\cap \xi(V^{-1}(b)) = \varnothing\\
&\Longleftrightarrow\qquad &
\Psi^{t_2-t_1}\bigl(V^{-1}(\chi(a))\bigr) &\cap V^{-1}(\chi(b))= \varnothing,
\end{alignat*}
where we used in the last line that by construction of $\xi$, $\xi(V^{-1}(a))=V^{-1}(\chi(a))$ and $\xi(V^{-1}(b))=V^{-1}(\chi(b))$.
Since $a < b$ we have $0 < \chi(a) < \chi(b)< 1$, and since for any $t\geq 0$ we have that $V^{-1}(-\infty,a]$ is $\Psi^{t}$-invariant by Proposition \ref{prop:radially_monotone_lyap_funcs}, it follows that indeed $\Psi^{t_2-t_1}\bigl(V^{-1}(\chi(a))\bigr) \cap V^{-1}(\chi(b))= \varnothing$.
Hence $\rho$ is injective.

We continue with surjectivity of $\rho$.
Letting $(y_n)_{n\in \N}$ be any sequence in $\widetilde{E}^s$ with $\norm{y_n}\to \infty$, it follows that $t(y_n)\to \infty$ and $\chi(y_n)\to 1$.
For any $\delta > 0$, let $U_\delta$ denote the $\delta$-sublevel set of $V$, consistent with our notation $U_1$.
Let $K\subset \Ws(M)$ be any compact set.
By compactness and asymptotic stability of $M$, there exists $t_0 > 0$ such that $\forall t \geq t_0\colon \Phi^t(K)\subset U_{1/2}$.
It follows that for all sufficiently large $n\in \N$: 
\begin{align*}
\rho(y_n)\in \Ws(M) \cap \bigcup_{t\geq t_0} \Phi^{-t}(\rho_0(U_1\setminus U_{1/2})) \subset \Ws(M)\setminus K. 
\end{align*}
Hence $\rho$ takes diverging sequences to diverging sequences and is therefore a proper map, so $\rho$ is also a closed map.
We have already shown that the continuous map $\rho$ is injective.
Using these facts, we establish surjectivity of $\rho$ as follows.
Since $\rho$ maps the manifold interior $\interior \widetilde{E}^s$ of $\widetilde{E}^s$ into the manifold interior $\interior \Ws(M)$ of $\Ws(M)$, it follows by invariance of domain that $\rho|_{\interior \widetilde{E}^s}\colon \interior \widetilde{E}^s \to \interior \Ws(M)$ is an open map, and since we also know that $\rho$ is a closed map, it follows by connectivity that $\interior \Ws(M) = \rho(\interior \widetilde{E}^s)$.
Next, since $\rho(\partial \widetilde{E}^s) \subset \partial \Ws(M)$ and since $\partial \widetilde{E}^s$ and $\partial \Ws(M)$ are topological manifolds, we may invoke invariance of domain again and similarly conclude that $\rho(\partial \widetilde{E}^s) = \partial \Ws(M)$.
This completes the proof of surjectivity of $\rho$.

To summarize, we have shown that $\rho$ is a bijective, continuous, and closed map.
Therefore, $\rho$ is a homeomorphism.
This completes the proof if $k = 0$.

Assuming now that  $k \geq 1$, it suffices to show that $\rho$ is a local diffeomorphism.
Since $\rho$ agrees with the diffeomorphism $\rho_0$ on $U_{1/2}$, it suffices to consider $x\in \widetilde{E}^s \setminus M$.
Let $y \coloneqq \rho_0\circ \Theta^{t(x)}\circ \xi(x)$, $\xi'\coloneqq    \frac{\partial}{\partial \delta}\xi(\delta x)|_{\delta = 1}$, and $\kappa \coloneqq    \frac{\partial}{\partial\delta}t(\delta x)|_{\delta = 1} $.
A computation using $\frac{\partial}{\partial t}\Phi^t = \D\Phi^t\circ f$ shows that
\begin{equation}
\frac{\partial}{\partial \delta}\rho(\delta x)|_{\delta = 1} = \D \Phi^{-t(x)}\left[-\kappa f(y) + \D\rho_0\left(\kappa g(\xi(x))+\D\Theta^{t(x)}\xi'\right)\right],
\end{equation}
where $g\colon  \widetilde{E}^s\to \T \Tilde{E}^s$ is the vector field generating $\Theta$ --- note that $g$ is tangent to $V$ level sets.
The vector in brackets points outward to $\Phi^{t(x)}\circ\rho(V^{-1}(a))$, where $a = V(x)$.
To see this, first note that Proposition \ref{prop:radially_monotone_lyap_funcs} and the inequality $\tau' > 0$ imply that $\kappa > 0$, and therefore $-\kappa f(y)$ points outward to $\Phi^{t(x)}\circ\rho(V^{-1}(a))$.
Similar reasoning also shows that  $\xi'$  is outward pointing at $V$ level sets.
Since $\Theta^t$ is a flow, it follows that $\D\Theta^{t(x)}$ can be smoothly deformed to the identity through isomorphisms (in other words, an ``isotopy''), which implies that $\D\Theta^{t(x)} \xi'$ is also outward pointing at $V$ level sets.
Since $\rho_0$ is a diffeomorphism which maps the zero section of $\tilde{E}^s$ to $M$, $\D\rho_0$ maps outward pointing vectors at $V$ level sets to outward pointing vectors at $V\circ \rho_0^{-1}$ level sets. 
Taken together, these facts show that the quantity in brackets indeed points outward to $\Phi^{t(x)}\circ\rho(V^{-1}(a))$.
Now since $\Phi^t$ is a flow, $\D \Phi^{-t(x)}$ can also be smoothly deformed to the identity through isomorphisms, and therefore the same reasoning above in the case of $\D\Theta^{t(x)}$ establishes that $\frac{\partial}{\partial \delta}\rho(\delta x)|_{\delta = 1}$ is outward pointing to $\rho(V^{-1}(a))$ at $\rho(x)$.
On the other hand, $\D \rho$ takes a basis for $\T_x V^{-1}(a)$ to a basis for $\T_{\rho(x)}\rho(V^{-1}(a))$, so $\D \rho$ is an isomorphism. 
This completes the proof.
\end{proof}

\section{Global linearization} \label{sec:global_linearization}
A classic result in the theory of normally hyperbolic invariant manifolds is that the dynamics are ``linearizable'' on some (a priori small) neighborhood of the NHIM \cite{pugh1970linearization,hirsch1977,palis1977topological}, which is to say that there is some neighborhood $U$ of $M\subset Q$ and a fiber-preserving homeomorphism $\varphi\colon U \to \varphi(U)\subset E^s\oplus E^u$ onto a neighborhood of the zero section such that
\begin{align}\label{eq:linearizing_conjugacy}
\varphi \circ \Phi^t|_U  = \D \Phi^t|_{E^s\oplus E^u} \circ \varphi,
\end{align}
for all $t \in \R$ such that both sides of the expression are defined.
This is a vast generalization of the Hartman--Grobman Theorem.

In \cite{lan2013linearization}, this local result is extended to a global result for the special cases of exponentially stable equilibria and periodic orbits.
More precisely, it is shown that the domain of the linearization can actually be taken to be the entire basin of attraction for these attractors. 
As conjectured in the conclusion of \cite{lan2013linearization}, this globalization result should generalize to hold for arbitrary (boundaryless) NAIMs.
In this section, we establish this generalization; our methods are similar to theirs.

We would like to apply this linearization result in the context of slow-fast systems to derive linear normal forms on a neighborhood of a slow manifold, improving upon the Fenichel Normal Form to be discussed in \S \ref{sec:applications}.
However, the relevant slow manifolds are often compact manifolds with boundary.
To the best of our knowledge, neither the results mentioned above nor the existing (local) linearization results in the literature directly apply in this case \cite{pugh1970linearization,robinson1971differentiable,hirsch1977,palis1977topological,sell1983linearization,sell1983vector,sakamoto1994smooth,smoothInvariant}.
Thus, the content of this section can be divided as follows.

\begin{enumerate}
	\item In \S \ref{sec:global_lin_boundaryless}, we prove that the dynamics restricted to the basin of attraction of a compact \emph{boundaryless} NAIM are globally linearizable (and smoothly linearizable, assuming some additional hypotheses). 
	This is the content of Theorem \ref{th:global_linearization} and its corollaries. 
	\item In \S \ref{sec:global_lin_inflowing}, we turn to the main goal of \S \ref{sec:global_linearization}, which is to prove that the dynamics restricted to the global stable manifold of a compact \emph{inflowing} NAIM are globally linearizable (and smoothly linearizable, assuming some additional hypotheses).  
	This is the content of Theorem \ref{th:smooth_inflowing_global_linearization} and its corollaries.
	We prove this local result in the course of proving the stronger global result.
	To achieve this, we use a topological construction developed in Appendix \ref{app:wormhole}, which might be of independent interest.
\end{enumerate}

\begin{Rem}\label{rem:mezic_remark}
	After we had proved Theorem \ref{th:global_linearization} (global linearization for the boundaryless case), we learned that Igor Mezi\'{c} independently obtained this theorem before us.
	A very readable proof appears in his soon-to-be published textbook on Koopman operator theory \cite{mezic_book}.
	His proof technique is the same as ours.
	However, his result applies only to boundaryless NAIMs, and therefore we need our Theorem \ref{th:smooth_inflowing_global_linearization} (global linearization for inflowing NAIMs) for our goal of deriving a linear normal form for a class of slow-fast systems, which we do in \S \ref{sec:applications}.
\end{Rem}

\subsection{Global linearization for boundaryless NAIMs}\label{sec:global_lin_boundaryless}

In the following results, recall that by a $C^k$ isomorphism, we mean a homeomorphism if $k = 0$ and a $C^k$ diffeomorphism if $k \geq 1$.
Theorem \ref{th:global_linearization} will be used as a stepping stone to prove a global linearization result for inflowing NAIMs in \S \ref{sec:global_lin_inflowing} below, which we apply to slow-fast systems in \S \ref{sec:applications}.

\begin{Th}\label{th:global_linearization}	
	Let $M\subset Q$ be a compact (boundaryless) $1$-NAIM for the $C^r$ flow $\Phi^t$ on $Q$.
	Assume that $E^s \in C^k$, with $0 \leq k \leq r-1$, and that $\Phi^t$ is locally $C^k$ conjugate to the linear flow $\D\Phi^t|_{E^s}$ on some neighborhood of $M \subset \Ws(M)$.
	Then $\Phi^t$ is globally $C^k$ conjugate to $\D\Phi^t|_{E^s}$, which is to say that there exists a $C^k$ fiber-preserving isomorphism $\varphi\colon  \Ws(M)\to E^s$ such that
	\begin{equation}\label{eq:global_conjugacy}
	\forall t \in \R\colon    \varphi \circ \Phi^t = \D\Phi^t|_{E^s}\circ \varphi.
	\end{equation}
	Additionally, $\varphi$ agrees with the local conjugacy on its domain.
\end{Th}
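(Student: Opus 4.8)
The plan is to globalize the given local conjugacy by flowing it forward and backward in time, exactly mirroring the strategy used by \cite{lan2013linearization} for equilibria and periodic orbits, but now over the base $M$. Suppose we are given a neighborhood $U$ of $M$ in $\Ws(M)$ and a $C^k$ fiber-preserving isomorphism $\varphi_0\colon U \to \varphi_0(U)\subset E^s$ satisfying $\varphi_0\circ\Phi^t = \D\Phi^t|_{E^s}\circ\varphi_0$ wherever both sides are defined. Since $M$ is a compact $1$-NAIM, it is asymptotically stable, so $\Ws(M)$ is an open neighborhood of $M$ and, by compactness, there is a time function with the property that every compact subset of $\Ws(M)$ flows into $U$ in uniformly bounded time; in particular, for each $x\in\Ws(M)$ there is a smallest $t\geq 0$ with $\Phi^t(x)\in U$. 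We then \emph{define} the candidate global conjugacy by
\begin{equation}\label{eq:def_global_conj}
\varphi(x) \coloneqq \D\Phi^{-t}|_{E^s}\circ \varphi_0\circ \Phi^t(x)
\end{equation}
for any $t\geq 0$ large enough that $\Phi^t(x)\in U$. The first step is to check this is \emph{well-defined} (independent of the choice of such $t$): if $t'>t$ both work, then $\Phi^{t'}(x) = \Phi^{t'-t}(\Phi^t(x))$ with both $\Phi^t(x),\Phi^{t'}(x)\in U$, and the local intertwining relation together with the group property of $\D\Phi^t|_{E^s}$ forces the two expressions to agree. The second step is to verify that $\varphi$ is $C^k$ and fiber-preserving: locally $x\mapsto t(x)$ can be chosen smoothly (or one works with a fixed large $t$ on a neighborhood, since $\Phi^t$ is an open map), and $\varphi$ is then a composition of $C^k$ maps; fiber-preservation follows because $\Phi^t$, $\varphi_0$, and $\D\Phi^t|_{E^s}$ all cover $\Phi^t|_M$, so $\varphi$ covers $\id_M$. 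The third step is the conjugacy identity \eqref{eq:global_conjugacy}: for arbitrary $s\in\R$ and $x\in\Ws(M)$, pick $t$ large enough that both $\Phi^t(x)$ and $\Phi^{t+s}(x) = \Phi^t(\Phi^s(x))$ lie in $U$ (possible since forward orbits eventually stay in $U$), and then a direct computation using \eqref{eq:def_global_conj} and the local relation gives $\varphi\circ\Phi^s = \D\Phi^s|_{E^s}\circ\varphi$; on the domain of $\varphi_0$ one may take $t=0$, so $\varphi$ agrees with $\varphi_0$ there.

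The remaining and most substantial step is to show $\varphi$ is a \emph{bijection} with $C^k$ inverse, i.e.\ a $C^k$ isomorphism onto all of $E^s$. Injectivity on $U$ holds because $\varphi_0$ is injective; to extend this globally, note that if $\varphi(x)=\varphi(y)$ then flowing both forward by a common large time $t$ and using the conjugacy relation for $\D\Phi^t|_{E^s}$ (which is injective) reduces to $\varphi_0(\Phi^t(x)) = \varphi_0(\Phi^t(y))$, hence $\Phi^t(x)=\Phi^t(y)$, hence $x=y$ since $\Phi^t$ is a diffeomorphism. For surjectivity and the inverse, one constructs $\varphi^{-1}$ symmetrically: given $v\in E^s$, since the linear flow $\D\Phi^t|_{E^s}$ contracts, $\D\Phi^t|_{E^s}v$ eventually lies in the open set $\varphi_0(U)$, so one sets $\varphi^{-1}(v)\coloneqq \Phi^{-t}\circ\varphi_0^{-1}\circ\D\Phi^t|_{E^s}(v)$ for $t$ large; the same well-definedness, smoothness, and intertwining arguments apply, and one checks $\varphi\circ\varphi^{-1}=\id$ and $\varphi^{-1}\circ\varphi=\id$ using that both maps are built from the \emph{same} local conjugacy $\varphi_0$. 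Thus $\varphi$ is a $C^k$ fiber-preserving isomorphism $\Ws(M)\to E^s$ with the stated properties.

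I expect the main obstacle to be purely bookkeeping rather than conceptual: one must be careful that the ``hitting time'' $t(x)$ — while it can be taken locally constant or locally smooth — is handled so that $\varphi$ is genuinely $C^k$ across regions where different choices of $t$ are used, and that all the ``both sides are defined'' caveats in the local relation \eqref{eq:linearizing_conjugacy} are respected (this is why one always flows \emph{forward} to land in $U$, never backward out of it). A clean way to organize this is to fix, for each point, a neighborhood on which a single large $t$ works, establish the formula and its properties there, and then invoke well-definedness to glue; compactness of $M$ and asymptotic stability guarantee such a $t$ exists locally and uniformly on compacta. No spectral gap hypothesis is needed for this globalization — the hypotheses $E^s\in C^k$ and local $C^k$ linearizability are exactly what feed into \eqref{eq:def_global_conj}, and the conditions ensuring local $C^k$ linearizability (hence the corollaries) are imported from the cited literature.
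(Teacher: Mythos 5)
Your proposal is correct and uses the same core strategy as the paper: globalize the local conjugacy $\varphi_0$ by flowing forward into $U$, applying $\varphi_0$, and flowing back under the linear flow. The difference is technical scaffolding. The paper fixes a strict smooth Lyapunov function $V$, takes a regular level set $V^{-1}(c)\subset U$, and uses the implicit function theorem to get a single globally-defined $C^r$ ``impact time'' $\tau\colon\Ws(M)\setminus M\to\R$ with $\Phi^{\tau(x)}(x)\in V^{-1}(c)$; it then also pushes $V$ forward to $E^s$ via $\varphi_0$ to prove injectivity (showing that the impact times must coincide). You instead use locally-constant flow times and let the well-definedness identity do the gluing, which avoids the Lyapunov function entirely and gives a somewhat more elementary injectivity argument (flow both points forward to a common large $t$, land both in $U$, and invoke injectivity of $\varphi_0$ and of $\Phi^t$). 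Both are valid; the paper's single smooth $\tau$ is a cleaner bookkeeping device, while yours is more self-contained. One small point you should make explicit: your well-definedness step $\D\Phi^{-t}\varphi_0\Phi^t(x)=\D\Phi^{-t'}\varphi_0\Phi^{t'}(x)$ applies the local relation to $y=\Phi^t(x)\in U$ with time $t'-t>0$, which requires the relevant orbit segment to stay in the domain of $\varphi_0$ (the ``both sides defined'' caveat of \eqref{eq:linearizing_conjugacy}). This is automatic once you shrink $U$ to a positively-invariant neighborhood (e.g.\ a Lyapunov sublevel set, which is what the paper implicitly uses), so it is not a gap, but it should be stated.
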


\begin{Rem}
	The hypotheses of Theorem \ref{th:global_linearization} \emph{assume} the existence of a local linearizing $C^k$ conjugacy.
	Theorem \ref{th:global_linearization} shows that any local linearizing conjugacy may be extended to a global linearizing conjugacy having the same regularity. 
\end{Rem}

\begin{Rem}\label{rem:compare_linearization_bundle_theorems}
	The relationship between Theorems \ref{th:fiber_bundle_theorem},  \ref{th:global_linearization}, and \ref{th:smooth_inflowing_global_linearization} (see \S \ref{sec:global_lin_inflowing} below) are as follows. 
	The hypotheses of Theorem \ref{th:global_linearization} and \ref{th:smooth_inflowing_global_linearization} are much stronger than those required for Theorem \ref{th:fiber_bundle_theorem}, and in particular the hypotheses of Theorem \ref{th:fiber_bundle_theorem} are not sufficient to prove the conclusion of Theorems \ref{th:global_linearization} and \ref{th:smooth_inflowing_global_linearization}.
	However, the hypotheses of Theorem \ref{th:global_linearization} and \ref{th:smooth_inflowing_global_linearization} suffice to prove the conclusion of Theorem \ref{th:fiber_bundle_theorem} in the cases that $\partial M = \varnothing $ and $M$ is inflowing with $\partial M \neq \varnothing$, respectively, since the conjugacy $\varphi\colon W^s(M)\to E^s$ is in particular a $C^k$ fiber-preserving isomorphism.
\end{Rem}

\begin{Rem}
	As pointed out in \cite{lan2013linearization}, the flow is automatically locally $C^1$ linearizable near an exponentially stable equilibrium or periodic orbit.
	See the references therein.
	It is shown in \cite{pugh1970linearization,hirsch1977,palis1977topological} that the flow is always locally $C^0$ linearizable near a NHIM.
	There are also various results in the literature giving conditions ensuring that $\Phi^t$ is locally $C^k$ linearizable near a general invariant manifold, such as  \cite{sakamoto1994smooth,takens1971partiallyhyp,robinson1971differentiable,sell1983linearization,sell1983vector}.
	See also \cite[Chap.~VI]{smoothInvariant} for similar results, as well as historical remarks.
	In particular, we obtain the following easy corollary.
\end{Rem}

\begin{Co}\label{co:easy_boundaryless_lin}
Let $M$, $Q$, $\Ps$, and $\Phi^t$ be as in Theorem \ref{th:global_linearization}.
Assume that $\Phi^t$ is a $C^1$ flow and that $M$ is a $1$-NAIM.
Then $\Phi^t|_{\Ws(M)}$ is globally topologically conjugate to $\D\Phi^t|_{E^s}$.
\end{Co}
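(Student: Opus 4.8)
The plan is to obtain this as an immediate consequence of Theorem~\ref{th:global_linearization} applied with $k=0$. That theorem carries two standing hypotheses beyond compactness of the $1$-NAIM $M$: that $E^s\in C^k$ with $0\le k\le r-1$, and that $\Phi^t$ is locally $C^k$ conjugate to $\D\Phi^t|_{E^s}$ on some neighborhood of $M$ in $\Ws(M)$. I would check that both hold automatically when $k=0$ and $r=1$ (recall $\Phi^t$ is now assumed merely $C^1$).

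For the bundle regularity: by the definition of a $1$-NAIM recalled in \S\ref{sec:construct_global_foliation}, the splitting $\T Q|_M=\T M\oplus E^s$ (with $E^u=\varnothing$) is a $\D\Phi^t$-invariant \emph{continuous} splitting, so $E^s$ is automatically a $C^0$ subbundle of $\T Q|_M$; and since $r=1$ we have $0\le 0\le r-1$, so the degree constraint is met. For the local conjugacy: here I would simply invoke the classical local topological linearization theorem for normally hyperbolic (hence normally attracting) invariant manifolds \cite{pugh1970linearization,hirsch1977,palis1977topological}, which furnishes a neighborhood $U$ of $M$ in $\Ws(M)$ and a fiber-preserving homeomorphism $\varphi_0\colon U\to\varphi_0(U)\subset E^s$ onto a neighborhood of the zero section satisfying $\varphi_0\circ\Phi^t=\D\Phi^t|_{E^s}\circ\varphi_0$ wherever both sides are defined. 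This is precisely the statement that $\Phi^t$ is locally $C^0$ conjugate to $\D\Phi^t|_{E^s}$.

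With both hypotheses verified for $k=0$, Theorem~\ref{th:global_linearization} produces a fiber-preserving homeomorphism $\varphi\colon\Ws(M)\to E^s$ with $\varphi\circ\Phi^t=\D\Phi^t|_{E^s}\circ\varphi$ for all $t\in\R$, which is exactly the claimed global topological conjugacy of $\Phi^t|_{\Ws(M)}$ to $\D\Phi^t|_{E^s}$. There is essentially no obstacle at this stage: the corollary is pure bookkeeping once one recalls that continuity of $E^s$ is built into the definition of a NAIM and that local $C^0$ linearizability near a NAIM is classical; all the substantive work lies in the proof of Theorem~\ref{th:global_linearization} itself.
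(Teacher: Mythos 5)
Your proposal is correct and matches the paper's proof: both verify the $k=0$ hypotheses of Theorem~\ref{th:global_linearization} (continuity of $E^s$ being automatic from the NAIM definition, and local topological linearizability coming from \cite{pugh1970linearization,hirsch1977,palis1977topological}) and then apply the theorem. The only difference is that you spell out the bundle-regularity check explicitly, which the paper leaves implicit.
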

\begin{proof}
	As remarked already, it is shown in \cite{pugh1970linearization,hirsch1977,palis1977topological} that $\Phi^t$ is locally topologically conjugate to $\D\Phi^t|_{E^s}$ near $M$.
	Thus Theorem \ref{th:global_linearization} yields the existence of a global topological conjugacy between $\Phi^t$ and $\D\Phi^t|_{E^s}$.
\end{proof}

\begin{Rem}
	This furnishes a proof alternative to the one given in \S \ref{sec:proof_fiber_bundle_theorem} that for a $1$-NAIM $M$, $\Ps\colon   \Ws(M)\to M$ is always a topological fiber bundle isomorphic to $E^s$ over $M$.
\end{Rem}

\begin{proof}[Proof of Theorem \ref{th:global_linearization}]
	By assumption, there exists a neighborhood $U$ of $M$ and a $C^k$ fiber-preserving isomorphism $\varphi_\text{loc} \colon  U\to \varphi_\text{loc}(U)\subset E^s$  such that for all $t > 0$:
	\begin{equation}\label{eq:semiconjugacy}
	\varphi_\text{loc}\circ \Phi^t|_U = \D\Phi^t|_{E^s}\circ \varphi_\text{loc}.
	\end{equation}
	We now extend this local conjugacy to a global one.
	
	\linelabel{R2_6}Let $V$ be a strict $C^\infty$ Lyapunov function for the flow $\Phi^t$ \cite{wilson1967structure,wilson1969smooth}, and let $f$ be the vector field generating that flow.
    $V$ is nonnegative, $V^{-1}(0)=M$, $V$ is proper, and the Lie derivative $L_f V$ of $V$ along trajectories not contained in $M$ is strictly negative. 
	Since $V$ is proper, there is $c> 0$ such that $V^{-1}(c) \subset U$, for example, take $c < \inf_{x \in \Ws(M)\setminus U}V(x)$.

	Since $L_f V < 0$ on $V^{-1}(c)$, it follows that the vector field $f$ intersects $V^{-1}(c)$ transversally.
	The properties of $V$ imply that for all $x \in \Ws(M)\setminus M$ there exists a unique ``impact time'' $\tau(x)\in \R$ such that $\Phi^{\tau(x)}(x)\in V^{-1}(c)$.
	Using transversality of $f$ to $V^{-1}(c)$ and the implicit function theorem applied to $(x,\tau)\mapsto V(\Phi^\tau(x))$, we see that  $\tau\colon \Ws(M)\setminus M\to \R$ is $C^{r}$.
	
	Now define a map $\varphi\colon \Ws(M)\to E^s$ by 
	\begin{align}\label{eq:global_varphi_def}
	\varphi(x) \coloneqq    \left\{
	\begin{array}{lr}
	\D \Phi^{-\tau(x)}\circ \varphi_\text{loc} \circ \Phi^{\tau(x)}(x), &  x \in \Ws(M)\setminus M\\
	\varphi_\text{loc}, &  x \in U.
	\end{array}
	\right.
	\end{align}
	See Figure \ref{fig:global-linearization-proof}.
	Note that $\varphi$ is well-defined because Equation \eqref{eq:semiconjugacy} implies that the two functions in \eqref{eq:global_varphi_def} agree on $U \setminus M$, and hence $\varphi\in C^k$ since clearly both maps in \eqref{eq:global_varphi_def} are.
	Note also that $\varphi$ maps fibers $\Ws(m)$ into fibers $E^s_m$ by invariance of the stable foliation and stable vector bundle under the nonlinear and linear flows, respectively. 
	It is easy to check directly that $\varphi$ conjugates the flows as in Equation \eqref{eq:global_conjugacy} --- we now show that $\varphi$ is a $C^k$ isomorphism.
	
	We first show that $\varphi\colon \Ws(M) \to E^s$ is injective.
	Define the $C^k$ function $V'\colon \varphi_\text{loc}(U)\to \R$ by $V'\coloneqq    V \circ (\varphi_\text{loc})^{-1}$.	
	We have that $\forall v = \varphi_\text{loc}(x) \in \varphi_\text{loc}(U)$ and all $t>0$: 
	\begin{align}\label{eq:lyap_func_Es}
	V'\circ \D \Phi^t(v) = V' \circ \D \Phi^t\circ \varphi_\text{loc}(x) = V' \circ  \varphi_\text{loc} \circ \Phi^t(x) = V\circ \Phi^t(x).
	\end{align}
	It follows that $V'$ is strictly decreasing along trajectory segments of $\D\Phi^t|_{E^s}$ contained in $\varphi_\text{loc}(U)$, so that any trajectory of $\D\Phi^t|_{E^s}$ starting in $\varphi_\text{loc}(U\setminus M)$ intersects the $c$ level set $\Sigma \coloneqq (V')^{-1}(c)$ of $V'$ in precisely one point.
	Now suppose that $\varphi(x) = \varphi(y)$ with $x \neq y$.
	Then we have $\D\Phi^{-\tau(x)}(v) = \D \Phi^{-\tau(y)}(w)$, where $v \coloneqq \varphi_\text{loc}\circ \Phi^{\tau(x)}(x)\in \Sigma$ and $w \coloneqq \varphi_\text{loc}\circ \Phi^{\tau(y)}(y)\in \Sigma$. 
	It follows that $v = \D\Phi^{\tau(x)-\tau(y)}(w)$ which, by the previous comments, implies that $\tau(x)=\tau(y)$ and that $v = w$.
    By injectivity of $\varphi_\text{loc}$ we therefore have $\Phi^{\tau(x)}(x)=\Phi^{\tau(x)}(y)$.
    Since $\Phi^{\tau(x)}$ is injective, $x = y$.
    Hence $\varphi$ is injective.
		
	We next show that $\varphi\colon \Ws(M) \to E^s$ is surjective.
	Note that $\varphi_\text{loc}$ maps $M$ one-to-one and onto the zero section of $E^s$ (this must be the case since homeomorphisms preserve $\omega$-limit sets). 
	Now consider any $v \in E^s\setminus M$, identifying $M$ with the zero section as usual.
	Since $M$ is a NAIM, the zero section of $E^s$ is asymptotically stable for the linear flow $\D\Phi^t|_{E^s}$.
	This fact and continuity imply that there is $t_0 > 0$ such that $\D\Phi^{t_0}(v) \in \Sigma$.
	Let $x' \in U$ be the unique point with $\varphi_\text{loc}(x')= \D\Phi^{t_0}(v)$.
	Setting $x = \Phi^{-t_0}(x')$, we see that $\tau(x) = t_0$ and that \begin{equation*}
	\varphi(x) = \D\Phi^{-\tau(x)}\circ \varphi_\text{loc} \circ \Phi^{\tau(x)}(x) = \D\Phi^{-t_0}\circ \varphi_\text{loc} \circ \Phi^{t_0}(x) = \D\Phi^{-t_0}\circ \varphi_\text{loc} (x') = \D\Phi^{-t_0}\circ \D\Phi^{t_0}(v)= v.
	\end{equation*} 
	
	To complete the proof, it suffices to prove that $\varphi^{-1}\in C^k$.
	Since $\varphi^{-1}|_{\varphi(U)} = (\varphi_\text{loc})^{-1}$, $\varphi^{-1}$ is $C^k$ on $U$.
	Now let $v \in E^s\setminus \varphi(U)$.
	By asymptotic stability of the zero section for $\D\Phi^t|_{E^s}$, there exists $t_0 > 0$ such that $\D\Phi^{t_0}(v)\in U$.
	Since $\varphi^{-1} = \Phi^{-t_0}\circ \varphi^{-1}\circ \D \Phi^{t_0}|_{E^s}$ by \eqref{eq:global_conjugacy}, it follows that 
	\begin{equation*}
	\varphi^{-1}|_{\D\Phi^{-t_0}(\varphi(U))} = \Phi^{-t_0}\circ (\varphi_\text{loc})^{-1}\circ \D \Phi^{t_0}|_{\D\Phi^{-t_0}(\varphi(U))}
	\end{equation*} 
	is a composition of $C^k$ maps, so that $\varphi^{-1}$ is $C^k$ on a neighborhood of $v$. 
	This completes the proof.
\end{proof}

\begin{figure}
	\centering
	\def\svgwidth{1\columnwidth}
	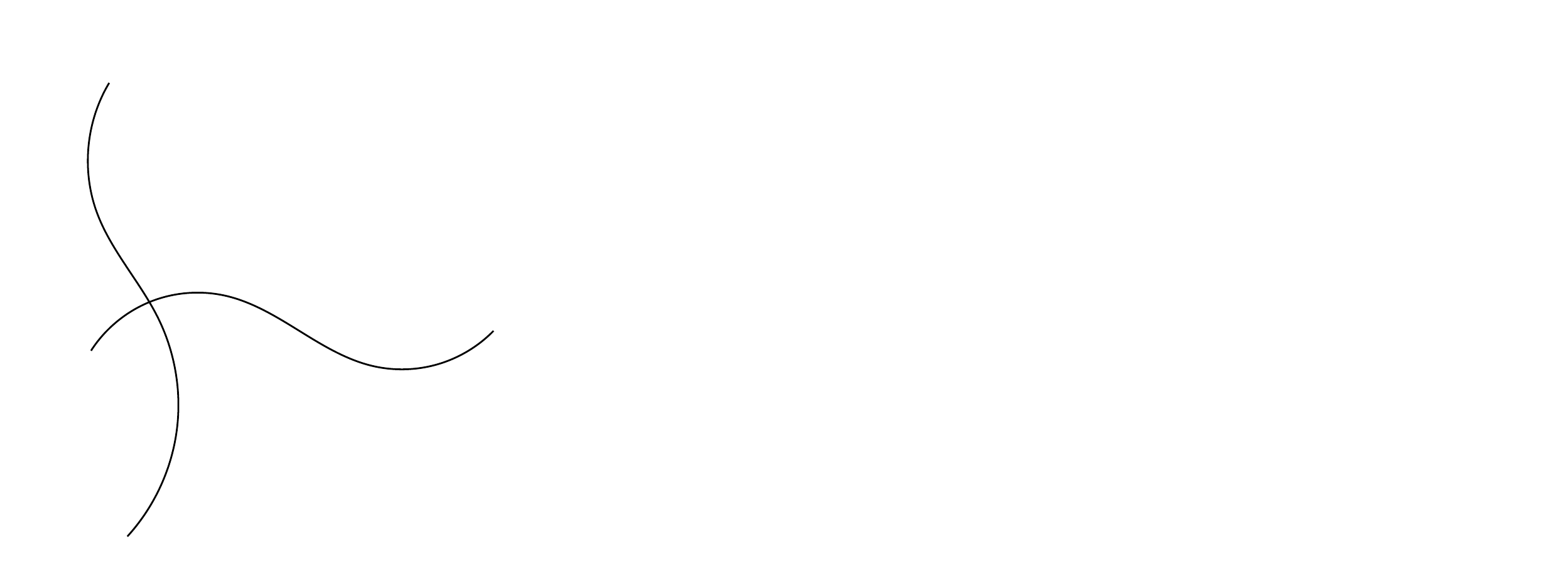
	\caption{An illustration of the proof of Theorem \ref{th:global_linearization}. The neighborhood $U\subset \Ws(M)$ and its image $\varphi_{\text{loc}}(U)$ are bounded by dashed curves. The level set $V^{-1}(c) \subset \Ws(M)$ is depicted by the dotted curves.}\label{fig:global-linearization-proof}
\end{figure}

\subsection{Global linearization for inflowing NAIMs}\label{sec:global_lin_inflowing}

We next proceed to our main goal for \S \ref{sec:global_linearization}, which is to prove a global linearization theorem for inflowing NAIMs.
The key tool we use is Proposition~\ref{prop:wormhole} in Appendix~\ref{app:wormhole}, which shows that many results about boundaryless NAIMs can be transferred to inflowing NAIMs. 
We reiterate that this result is necessary for our derivation of a linear normal form (see Theorem \ref{th:GSP_sakamoto_linearization}) for slow-fast systems, since the slow manifolds for these systems are typically manifolds with boundary.

\begin{Th}\label{th:smooth_inflowing_global_linearization}
	Let $M\subset Q$ be a compact inflowing $r$-NAIM for the flow $\Phi^t$ generated by the $C^r$ vector field $f$ on $Q$, where $r \geq 3$.
	Assume further that there exist constants $0 < \delta < -\alpha < -\beta$ and $K \geq 1$ such that $-\alpha > r \delta$, $-\beta < -2\alpha - (r-1)\delta$, and such that for all $t \geq 0$
	\begin{equation}
	\begin{aligned}\label{eq:sakamoto-rates-linearization-section}
	K^{-1}e^{-\delta t} \leq \minnorm{\D \Phi^t|_{TM}} &\leq \norm{\D \Phi^t|_{TM}} \leq K e^{\delta t}, \\
	K^{-1}e^{-\delta t} \leq \minnorm{(\D \Phi^t|_{TM})^{-1}} &\leq \norm{(\D \Phi^t|_{TM})^{-1}} \leq K e^{\delta t}, \\
	K^{-1}e^{\beta t} \leq \minnorm{\D \Phi^t|_{E^s}} &\leq \norm{\D \Phi^t|_{E^s}} \leq K e^{\alpha t}
	\end{aligned}
	\end{equation}
	hold uniformly on $TM$ and $E^s$.
	Then $E^s\in C^{r-1}$ and $\Phi^t|_{\Ws(M)}$ is globally $C^{r-1}$ conjugate to $\D\Phi^t|_{E^s}$.
\end{Th}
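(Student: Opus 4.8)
The plan is to reduce the inflowing case to the boundaryless Theorem~\ref{th:global_linearization} by means of the embedding of a compact inflowing NAIM into a compact boundaryless NAIM provided by Proposition~\ref{prop:wormhole} of Appendix~\ref{app:wormhole}. First I would dispense with the statement $E^s\in C^{r-1}$: feeding the tangential and stable estimates in \eqref{eq:sakamoto-rates-linearization-section} into the $k$-center bunching inequality \eqref{eq:fiber-bundle-cor-center-bunching} with $k = r-1$, one checks that $\norm{\D\Phi^t|_{TM}}^i\norm{\D\Phi^t|_{E^s}}\le K^{i+1}e^{(i\delta+\alpha)t}$ while $\minnorm{\D\Phi^t|_{TM}}\ge K^{-1}e^{-\delta t}$, so the bunching condition holds with some negative exponent precisely because $-\alpha > r\delta$ forces $(i+1)\delta+\alpha < 0$ for all $0\le i\le r-1$. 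Hence \cite[Thm~5]{fenichel1977asymptotic} (as invoked in Corollary~\ref{co:fiber-bundle-NAIM-center-bunching}) gives $\Psl\in C^{r-1}$ and $E^s\in C^{r-1}$.

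Next I would invoke Proposition~\ref{prop:wormhole} to obtain a compact \emph{boundaryless} $r$-NAIM $\widehat M\supset M$ for an extended $C^r$ flow $\widehat\Phi^t$ on an ambient manifold $\widehat Q$, such that $\widehat\Phi^t$ agrees with $\Phi^t$ on a neighborhood of $M$ (so $\widehat E^s$ agrees with $E^s$ over $M$ and the local stable foliations agree over $M$), and such that the asymptotic rate estimates \eqref{eq:sakamoto-rates-linearization-section} are inherited by $T\widehat M$ and $\widehat E^s$, possibly after enlarging $K$. The first rate inequality $-\alpha > r\delta$ then also gives $\widehat E^s\in C^{r-1}$ as above. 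The remaining gap inequality $-\beta < -2\alpha-(r-1)\delta$ is a nonresonance-type spectral gap condition of exactly the form appearing in the smooth partial linearization theorems of Sakamoto and Sell \cite{sakamoto1994smooth,sell1983linearization,sell1983vector,robinson1971differentiable}; together with the two-sided tangential bounds it guarantees that $\widehat\Phi^t$ is locally $C^{r-1}$ conjugate to $\D\widehat\Phi^t|_{\widehat E^s}$ on some neighborhood of $\widehat M$ in $\Ws(\widehat M)$. (Here the hypothesis $r\ge 3$ ensures $r-1\ge 2$, so both the bundle regularity and the local linearization are nontrivial.)

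With these verified, the boundaryless Theorem~\ref{th:global_linearization} applies to $\widehat M$ with $k = r-1$, producing a $C^{r-1}$ fiber-preserving isomorphism $\widehat\varphi\colon \Ws(\widehat M)\to\widehat E^s$ with $\widehat\varphi\circ\widehat\Phi^t = \D\widehat\Phi^t|_{\widehat E^s}\circ\widehat\varphi$ for all $t\in\R$ and agreeing with the local conjugacy near $\widehat M$. Finally I would descend back to $M$. Since every point of $\Ws(M)$ converges to the trajectory of its footpoint, which stays in $M$ by positive invariance, one has $\Ws(M)\subset\Ws(\widehat M)$; the footpoint map $\widehat\Ps$ sends $\Ws(M)$ into $M$; and because the flows and the stable foliations agree near $M$, the restriction $\varphi\coloneqq\widehat\varphi|_{\Ws(M)}$ is a $C^{r-1}$ fiber-preserving injection of $\Ws(M)$ into $\widehat E^s$ with image exactly $E^s = \widehat E^s|_M$ (surjectivity onto $E^s$ follows by applying asymptotic stability of the zero section inside $E^s$ and then running $\widehat\varphi^{-1}$, noting the resulting preimage has footpoint in $M$ and therefore lies in $\Ws(M)$). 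Thus $\varphi\colon\Ws(M)\to E^s$ is a $C^{r-1}$ isomorphism, $\varphi^{-1}$ is $C^{r-1}$ since $\widehat\varphi^{-1}$ is, and $\varphi$ conjugates $\Phi^t|_{\Ws(M)}$ to $\D\Phi^t|_{E^s}$.

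I expect the principal obstacle to be the last step: verifying that the global conjugacy of the boundaryless extension restricts \emph{correctly} to a bijection $\Ws(M)\to E^s$ of the original inflowing system, i.e.\ that $\widehat\varphi^{-1}(E^s) = \Ws(M)$. This is precisely the kind of compatibility between stable manifolds, stable foliations, and flows near $M$ that Appendix~\ref{app:wormhole} is designed to bookkeep, so in the actual write-up this should reduce to carefully citing the relevant statements from Proposition~\ref{prop:wormhole} rather than to any essentially new argument; the rest is the routine matching of the rate inequalities in \eqref{eq:sakamoto-rates-linearization-section} against the hypotheses of the cited local smooth linearization theorems and of \cite[Thm~5]{fenichel1977asymptotic}.
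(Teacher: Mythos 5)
Your proposal follows the paper's proof essentially verbatim: reduce the inflowing NAIM to a compact boundaryless NAIM via Proposition~\ref{prop:wormhole}, invoke Sakamoto's local smooth linearization \cite[Thm~B]{sakamoto1994smooth} together with the inherited rate estimates, globalize with Theorem~\ref{th:global_linearization}, and restrict the resulting conjugacy back to $\Ws(M)$ using items (1)--(3) of the wormhole proposition. The only cosmetic difference is that you derive $E^s\in C^{r-1}$ up front from the $k$-center bunching condition via Corollary~\ref{co:fiber-bundle-NAIM-center-bunching}, whereas the paper extracts $\widehat{E}^s\in C^{r-1}$ directly from Sakamoto's theorem applied to the boundaryless extension; both routes are valid.
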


\begin{proof}
	By Proposition~\ref{prop:wormhole} in Appendix~\ref{app:wormhole}, there exists a $C^\infty$ manifold $\widehat{Q}$, an open neighborhood $U \supset \Ws(M)$, a $C^\infty$ embedding $\iota\colon U\to \widehat{Q}$, a $C^r$ vector field $\hat{f}$ on $\widehat{Q}$ generating a $C^r$ flow $\widehat{\Phi}^t$, and a $C^r$ compact and boundaryless $r$-NAIM $\widehat{N}\subset \widehat{Q}$ for $\widehat{\Phi}^t$ with the following properties.
	
	\begin{enumerate}
		\item \label{item:vector_field_restrict_smooth_inflowing_linearize} $\iota_*(f|_{\Ws(M)}) = \hat{f}|_{\iota(\Ws(M)}$.
		\item \label{item:stable_fibers_restrict_smooth_inflowing_linearize} $\forall m \in M: \iota(\Ws(m)) = \widehat{W}^s(\iota(m))$, where $W^s(M)$ and $\widehat{W}^s(\widehat{N})$ are the global stable foliations of $M$ for $f$ and $\widehat{N}$ for $\hat{f}$, respectively.
		\item \label{item:sakamoto_rates_persist_smooth_inflowing_linearize}There exist constants $\delta',\alpha', \beta'$ arbitrarily close to $\delta,\alpha,\beta$ such that \eqref{eq:sakamoto-rates-linearization-section}  holds uniformly on $T\widehat{N}$ and $\widehat{E}^s$, after replacing $\delta$, $\alpha$, $\beta$, $M$, $E^s$, and $\Phi^t$ by $\delta'$, $\alpha'$, $\beta'$, $\widehat{N}$, $\widehat{E}^s$, and $\widehat{\Phi}^t$, respectively. 
		Here, $\widehat{E}^s$ is the stable vector bundle of $\widehat{N}$ for $\widehat{\Phi}^t$.	
		\end{enumerate}
		In \cite[p. 335, Thm B]{sakamoto1994smooth} it is shown\footnote{%
		Comparing~\eqref{eq:sakamoto-rates-linearization-section} to \cite[Eq.~(2.7)]{sakamoto1994smooth}, note that we express the conditions on $\D\Phi^t|_{TM}$ only for $t \geq 0$, but also on the inverse to prevent issues with $\Phi^t(m)$ leaving $M$. We do not have an equivalent for Sakamoto's estimate for $Z$ since we have no unstable bundle. Furthermore, the lack of minimum norms for the lower bounds seems to be a minor oversight in Sakamoto's conditions.
		Next, Sakamoto's result is actually only stated for a NAIM with trivial normal bundle, but this is easily extended to the general case by locally writing the dynamics on the normal bundle and then extending them to the total space of the direct sum with an \concept{inverse bundle}; this useful trick is briefly mentioned in a different context on \cite[pp.~333-334]{sakamoto1994smooth}, but see also \cite[Sec.~3]{pugh1997holder} for more details. 
		Finally, it is actually claimed in \cite[p. 333]{sakamoto1994smooth} that $\widehat{E}^s \in C^{r}$, but to the best of our knowledge this seems to be a minor oversight --- the references in \cite[p. 333]{sakamoto1994smooth} provided to support this statement either claim $C^{r-1}$ smoothness only \cite[Lem. 3.2 (ii)]{sakamoto1990invariant}, or omit the details of higher degrees of smoothness in their proof \cite[Thm 3.1]{yi1993generalized}, \cite[Thm 3.1]{yi1993stability}. 
		For a proof that $\widehat{E}^s \in C^{r-1}$, see \cite[Thm 7]{fenichel1971persistence} or \cite[App. B]{sakamoto1990invariant}. 
		} that item \ref{item:sakamoto_rates_persist_smooth_inflowing_linearize} implies that $\widehat{W}^s(\widehat{N}),\widehat{E}^s\in C^{r-1}$ and that $\widehat{\Phi}^t$ is locally $C^{r-1}$ conjugate to $\D \widehat{\Phi}^t|_{\widehat{E}^s}$ near $\widehat{N}$.
		By Theorem \ref{th:global_linearization}, it follows that $\widehat{\Phi}^t|_{\widehat{W}^s(\widehat{N})}$ is globally $C^{r-1}$ conjugate to $\D\widehat{\Phi}^t|_{\widehat{E}^s}$.
		Let $\widehat{\varphi}\colon \widehat{W}^s(\widehat{N})\to \widehat{E}^s$ be such a conjugacy.
		Items \ref{item:vector_field_restrict_smooth_inflowing_linearize} and \ref{item:stable_fibers_restrict_smooth_inflowing_linearize} imply that restriction of $\widehat{\varphi}$ yields a well-defined global $C^{r-1}$ conjugacy $\widehat{\varphi}|_{\widehat{W}^s(\iota(M))}\colon \widehat{W}^s(\iota(M)) \to  \widehat{E}^s|_{\iota(M)}$ between $\widehat{\Phi}^t|_{\widehat{W}^s(\iota(M))}$ and $\D\widehat{\Phi}^t|_{\widehat{E}^s|_{\iota(M)}}$.
		Hence $\varphi\coloneqq \iota|_{\widehat{W}^s(\iota(M))}^{-1}\circ \widehat{\varphi} \circ \iota$ is a global $C^{r-1}$ conjugacy from $\Phi^t|_{\Ws(M)}$ to $\D\Phi^t|_{E^s}$.   
\end{proof}

\begin{Co}\label{co:top_inflowing_global_linearization}	
	Let $M\subset Q$ be a compact inflowing $1$-NAIM for the $C^r$ flow $\Phi^t$ generated by the $C^r$ vector field $f$ on $Q$.
	Then $\Phi^t|_{\Ws(M)}$ is globally topologically conjugate to $\D\Phi^t|_{E^s}$.
\end{Co}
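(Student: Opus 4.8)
The plan is to follow the strategy of the proof of Theorem~\ref{th:smooth_inflowing_global_linearization}, but with the boundaryless global linearization input supplied by Corollary~\ref{co:easy_boundaryless_lin} (the $C^0$ case) rather than by the smooth version; since only a topological conjugacy is sought, no spectral gap hypotheses are needed.

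First I would apply Proposition~\ref{prop:wormhole} from Appendix~\ref{app:wormhole} to the compact inflowing $1$-NAIM $M$ for the $C^r$ vector field $f$. This produces a $C^\infty$ manifold $\widehat{Q}$, an open neighborhood $U \supset \Ws(M)$, a $C^\infty$ embedding $\iota\colon U \to \widehat{Q}$, a $C^r$ vector field $\hat{f}$ on $\widehat{Q}$ with flow $\widehat{\Phi}^t$, and a compact boundaryless $r$-NAIM $\widehat{N} \subset \widehat{Q}$ for $\widehat{\Phi}^t$ with the properties: (i) $\iota_*(f|_{\Ws(M)}) = \hat{f}|_{\iota(\Ws(M))}$; (ii) $\iota(\Ws(m)) = \widehat{W}^s(\iota(m))$ for all $m \in M$; and (iii) the rate estimates defining $1$-normal attractivity are (approximately) inherited by $\widehat{N}$, so that $\widehat{N}$ is in particular a boundaryless $1$-NAIM for $\widehat{\Phi}^t$.

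Since $\widehat{N}$ is a boundaryless $1$-NAIM for the $C^r$ flow $\widehat{\Phi}^t$, Corollary~\ref{co:easy_boundaryless_lin} applies and yields a global topological conjugacy $\widehat{\varphi}\colon \widehat{W}^s(\widehat{N}) \to \widehat{E}^s$, i.e.\ a fiber-preserving homeomorphism with $\widehat{\varphi}\circ\widehat{\Phi}^t = \D\widehat{\Phi}^t|_{\widehat{E}^s}\circ\widehat{\varphi}$ for all $t \in \R$, where $\widehat{E}^s$ is the stable bundle of $\widehat{N}$.

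Finally, exactly as in the proof of Theorem~\ref{th:smooth_inflowing_global_linearization}, properties (i) and (ii) imply that $\widehat{\varphi}$ restricts to a well-defined global topological conjugacy $\widehat{\varphi}|_{\widehat{W}^s(\iota(M))}\colon \widehat{W}^s(\iota(M)) \to \widehat{E}^s|_{\iota(M)}$ between $\widehat{\Phi}^t|_{\widehat{W}^s(\iota(M))}$ and $\D\widehat{\Phi}^t|_{\widehat{E}^s|_{\iota(M)}}$, and then $\varphi \coloneqq \iota|_{\widehat{W}^s(\iota(M))}^{-1}\circ\widehat{\varphi}\circ\iota$ is the desired global topological conjugacy from $\Phi^t|_{\Ws(M)}$ to $\D\Phi^t|_{E^s}$. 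Essentially all of the substantive work has already been done elsewhere; the only points needing care are verifying that Proposition~\ref{prop:wormhole} applies under the present (weaker) hypotheses, and that $\iota$ carries the stable foliation and stable bundle of $M$ onto those of $\widehat{N}$ over $\iota(M)$ so that the restriction of $\widehat{\varphi}$ is meaningful. This is the main, and modest, obstacle, and it is handled exactly as in the smooth case with all regularity statements downgraded to $C^0$.
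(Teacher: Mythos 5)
Your proof is correct and takes essentially the same route as the paper: the paper's proof of this corollary says to repeat the argument of Theorem~\ref{th:smooth_inflowing_global_linearization} verbatim, substituting the local $C^0$ linearization of \cite[Thm 2]{pugh1970linearization} for Sakamoto's smooth one, and then invoking Theorem~\ref{th:global_linearization}; you instead invoke Corollary~\ref{co:easy_boundaryless_lin}, which is precisely that composition already packaged, so the content is identical.
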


\begin{proof}
The proof is identical to that of Theorem \ref{th:smooth_inflowing_global_linearization}, but with \cite[Thm 2]{pugh1970linearization} used instead of \cite[p. 335, Thm B] {sakamoto1994smooth} to provide a local linearizing $C^0$ conjugacy.	
\end{proof}

\section{Applications to Geometric Singular Perturbation Theory}\label{sec:applications}

We give two applications of Theorem \ref{th:fiber_bundle_theorem} and Theorem \ref{th:smooth_inflowing_global_linearization} to slow-fast systems in the context of geometric singular perturbation theory (GSP).
Our applications assume the special case in which the slow manifold is attracting.
Both applications are improvements of the so-called Fenichel Normal Form, discussed below, and are contained in Theorems \ref{th:GSP_global_fen_norm_form} and \ref{th:GSP_sakamoto_linearization} below.

The Fenichel Normal Form \cite{jones1994tracking, jones1995geometric,kaper1999systems,jones2009generalized} is the form that the equations of motion take near the slow manifold of a slow-fast system, when written in local coordinates which are adapted to the slow manifold and its stable and unstable foliations.
One application of this normal form was to derive the estimates used to prove the so-called Exchange Lemma and its extensions, which are useful tools for establishing the existence of heteroclinic and homoclinic orbits in slow-fast systems; see, e.g., \cite{jones1994tracking,jones1995geometric,jones1996tracking_exp,brunovsky1996tracking,kaper2001primer, liu2006geometric,schecter2008exchange,jones2009generalized} and the references therein.
In the special case that the slow manifold is attracting, another application of the Fenichel Normal Form is to dimensionality reduction: in this normal form, the dynamics of the transformed slow variable are decoupled from the transient dynamics of the transformed fast variable, and therefore the transformed slow dynamics serves as a reduction of the full dynamics in a clear way.
Stated differently, in the coordinates placing the system in Fenichel Normal Form, the map $\Ps$ sending stable fibers to their basepoints is simply an orthogonal projection; the coordinate change ``straightens out'' the stable fibers $\Ws(m)$, for $m$ in the slow manifold. 
\linelabel{E_4}We remark that the recent paper \cite{jones2009generalized} is a useful source of historical information on the Fenichel Normal Form.

As mentioned above, for our applications to GSP we will assume the special case in which the critical manifold is a NAIM. 
This special case arises naturally in many concrete applications, such as in understanding nonholonomic dynamics as a limit of friction forces \cite{eldering2016realizing}, in biolocomotion \cite{eldering2016role}, in the context of chemical reactions and combustion \cite{lam1994csp}, in various problems in control theory \cite{kokotovic1976singular}, and many more \cite[Ch. 20]{kuehn2015multiple}.
For general background on GSP, one may consult, e.g., the seminal paper \cite{fenichel1979geometric}, the expository articles \cite{kaper1999systems,jones1995geometric}, or the recent book \cite{kuehn2015multiple}.

Our two applications are as follows.

\begin{enumerate}

	\item Using Theorem 
	\ref{th:fiber_bundle_theorem} and assuming that the slow manifold is a NAIM, we show that the Fenichel Normal Form is valid on the union of \emph{global} stable manifolds $\cup_\epsilon\Ws(K_\epsilon)$ of slow manifolds $K_\epsilon$, rather than just on the union of local stable manifolds $\cup_\epsilon \Wsl(K_\epsilon)$.
	This is the content of Theorem \ref{th:GSP_global_fen_norm_form}.
	
	\item Using Theorem \ref{th:smooth_inflowing_global_linearization} and assuming that the slow manifold is a NAIM, we show that under additional spectral assumptions on the critical manifold, there exists a stronger normal form which is \emph{linear} in the fast variables.
	This normal form can be viewed as a stronger version of the Fenichel Normal Form.
	Additionally, this linear normal form is also valid on the union of \emph{global} stable manifolds $\cup_\epsilon\Ws(K_\epsilon)$ of slow manifolds $K_\epsilon$.
	This is the content of Theorem \ref{th:GSP_sakamoto_linearization}.
\end{enumerate}

The remainder of this section is as follows.
We first introduce the context for Theorems \ref{th:GSP_global_fen_norm_form} and \ref{th:GSP_sakamoto_linearization} by describing the GSP setup in \S \ref{sec:GSP_setup}.
Next, \S \ref{sec:GSP_global_fen_norm_form} contains the global extension of the Fenichel Normal Form as an application of Theorem \ref{th:fiber_bundle_theorem}.
Following this, \S \ref{sec:GSP_linearized} contains the derivation of the linear normal form, as well as its global extension, as an application of Theorem \ref{th:smooth_inflowing_global_linearization}.
Next, in in \S \ref{sec:GSP_discuss} we discuss our results and relate them to the so-called method of straightening out fibers (SOF method) recently appearing in the literature \cite{kristiansen2014_SOF_method}.
Finally, in \S \ref{sec:GSP_example} we illustrate our results in an example involving a classical mechanical system. 

\subsection{Setup and classic results}\label{sec:GSP_setup}

Consider a singularly perturbed system of the form
\begin{equation}\label{eq:GSP-fast}
\begin{split}
x' &= f(x,y,\epsilon)\\
\epsilon y' &= g(x,y,\epsilon),
\end{split}
\end{equation}
where $x \in \R^{n_x}$ and $y \in \R^{n_y}$ are functions of ``slow time'' $\tau$, $\epsilon$ is a small parameter, and\footnote{Note that we have adopted Fenichel's convention of letting $x$ denote the ``slow'' variable here, as a matter of personal style.} $f,g \in C^{r \geq 2}$.
For all $\epsilon \neq 0$, this system is equivalent via a time-rescaling $t = \tau/\epsilon$ to the regularized system
\begin{equation}\label{eq:GSP-slow}
\begin{split}
\dot{x} &= \epsilon f(x,y,\epsilon)\\
\dot{y} &= g(x,y,\epsilon).
\end{split}
\end{equation}
We let a ``prime'' denote a derivative with respect to $\tau$, and a ``dot'' denote a derivative with respect to the ``fast time'' $t$.
 
Now suppose that $K_0 \subset \interior \widehat{K}_0 \subset \widehat{K}_0$ are compact manifolds with boundary contained in $S\coloneqq \{(x,y):g(x,y,0)=0\}$, with $\interior \widehat{K}_0$ denoting the manifold interior of $\widehat{K}_0$.
Noting that $S$ consists of critical points of the $(\epsilon = 0)$ system, let us assume that the eigenvalues of $\D_2 g(x,y,0)$ have strictly negative real part on $\widehat{K}_0$. In particular, this implies that $\widehat{K}_0$ can be locally written as a graph $\widehat{K}_0 \coloneqq \{ (x,F(x)) \}$ over some domain $B \subset \R^{n_x}$.

By making local modifications to the vector field defined by \eqref{eq:GSP-slow} in arbitrarily small neighborhoods of $\partial K_0$ and $\partial \widehat{K}_0$, we may henceforth assume without loss of generality that the vector field is inward pointing at $\partial K_0$ and outward pointing at\footnote{Similar constructions are carried out in greater detail in \cite[\S~2]{josic2000synchronization}.} $\partial \widehat{K}_0$.

By our assumption on the eigenvalues of $\D_2 g$, we have that $K_0\times \R$ and $\widehat{K}_0\times \R$ are noncompact NAIMs for the dynamics
\begin{equation}\label{eq:GSP_aug_eps_zero}
\begin{split}
\dot{x} &= 0\\
\dot{y} &= g(x,y,0)\\
\dot{\tilde{\epsilon}} &= 0,
\end{split}
\end{equation}
since the equations for $\dot{x}$ and $\dot{y}$ in \eqref{eq:GSP_aug_eps_zero} are independent of $\tilde{\epsilon}$.
Here, $\tilde{\epsilon}\in \R$ is a new parameter, and its relation to $\epsilon$ will be determined subsequently.
We compactify these NAIMs by replacing $\R$ with its one-point compactification $S^1$, and we thereby henceforth consider \eqref{eq:GSP_aug_eps_zero} to be defined on $\R^{n_x+n_y} \times S^1$.
For this new domain of definition, $K_0\times S^1$ and $\widehat{K}_0\times S^1$ are compact inflowing and overflowing NAIMs, respectively.

Next, following \cite[p. 142]{eldering2013normally}, we use a scaling parameter $\kappa > 0$ to slowly ``turn on'' the $\tilde{\epsilon}$ dependence. 
Let $\chi\colon \R\to [0,1]$ be a $C^\infty$ nonnegative bump function such that $\chi \equiv 1$ on $[-1,1]$ and $\chi \equiv 0$ on $\R\setminus (-2,2)$, and --- anticipating a parameter substitution $\epsilon = \kappa \tilde{\epsilon}$ --- consider the vector field defined by
\begin{equation}\label{eq:GSP_aug_saturate}
\begin{split} 
\dot{x} &=  \chi(\tilde{\epsilon})\kappa\tilde{\epsilon} f(x,y,\chi(\tilde{\epsilon})\kappa\tilde{\epsilon})\\
\dot{y} &=  g(x,y,\chi(\tilde{\epsilon})\kappa\tilde{\epsilon}) \\
\dot{\tilde{\epsilon}} &= 0.
\end{split}
\end{equation}
One can verify that this vector field can be made arbitrarily $C^r$-close to \eqref{eq:GSP_aug_eps_zero} by taking $0 < \kappa \ll 1$ sufficiently small.
It follows from Fenichel's theorem on persistence of overflowing NAIMs \cite[Thm~1]{fenichel1971persistence} that there exists a $\kappa > 0$ such that $\widehat{K}_0\times S^1$ persists to a $C^r$-nearby overflowing $r$-NAIM for \eqref{eq:GSP_aug_saturate}, and that $K_0\times S^1$ persists to an inflowing NAIM inside it, since $K_0\times S^1\subset \widehat{K}_0\times S^1$ and inflowing invariance is an open condition.
Because $K_0$ consisted entirely of critical points for \eqref{eq:GSP_aug_eps_zero}, by a theorem of Fenichel the local stable foliation of the inflowing NAIM is $C^{r-1}$ \cite[Thm~5]{fenichel1977asymptotic}.

We now make the change of variables $\epsilon = \kappa \tilde{\epsilon}$ and see that \eqref{eq:GSP_aug_saturate} is equivalent to
\begin{equation}\label{eq:GSP_aug_sat_notilde}
\begin{split}
\dot{x} &=  \chi(\epsilon/\kappa)\epsilon f(x,y,\chi(\epsilon/\kappa)\epsilon)\\
\dot{y} &=  g(x,y,\chi(\epsilon/\kappa)\epsilon) \\
\dot{\epsilon} &= 0,
\end{split}
\end{equation}
so it follows that \eqref{eq:GSP_aug_sat_notilde} has compact $r$-NAIMs $M$ and $\widehat{M}$ which are respectively inflowing and overflowing, and with $M$ contained in the manifold interior of $\widehat{M}$.
Since $M$ and $\widehat{M}$ are the images of the NAIMs for \eqref{eq:GSP_aug_saturate} through a diffeomorphism, the local stable foliation $\Wsl(M)$ of $M$ for \eqref{eq:GSP_aug_sat_notilde} is also $C^{r-1}$ in all variables $x,y,\epsilon$.

\subsection{Globalizing the Fenichel Normal Form}\label{sec:GSP_global_fen_norm_form} 

Continuing the analysis of \S \ref{sec:GSP_setup}, we may apply Theorem \ref{th:fiber_bundle_theorem} to deduce that the leaves of the global stable foliation of $M$ for \eqref{eq:GSP_aug_sat_notilde} fit together to form a $C^{r-1}$ disk bundle $\Ps\colon\Ws(M)\to M$ isomorphic (as a disk bundle) to $E^s$.
By the definition of $\chi$ we see that for $\epsilon \in [-\kappa,\kappa]$, \eqref{eq:GSP_aug_sat_notilde} reduces to the system
\begin{equation}\label{eq:GSP_aug}
\begin{split}
\dot{x} &= \epsilon f(x,y,\epsilon), \\
\dot{y} &=  g(x,y,\epsilon), \\
\dot{\epsilon} &= 0.
\end{split}
\end{equation}
As in \cite{jones1995geometric}, let us make the commonly made assumption\footnote{%
	A more general situation where $M$ cannot be written as a graph can be handled using a tubular neighborhood modeled on the normal bundle of $M$.
} that $\widehat{K}_0$ is the graph of a map $B\subset \R^{n_x} \to \R^{n_y}$, where $B$ is a closed ball in $\R^{n_x}$ --- as we have already noted, by the implicit function theorem this can always be achieved by shrinking $\widehat{K}_0$ if necessary.
Thus, if $\kappa$ is sufficiently small, we can write $M$ as the graph of a $C^r$ map $y = F(x,\epsilon)$ defined on a suitable open subset of $\R^{n_x}\times S^1$.
Making the coordinate change $(x,y,\epsilon)\mapsto (x, y-F(x,\epsilon),\epsilon)$, we may assume that $M$ is contained in $\R^{n_x}\times \{0\} \times S^1$.
Since we assumed that $\widehat{K}_0$ is contractible it follows that $M$ deformation retracts onto $S^1$, and hence the bundle $\Ps\colon \Ws(M) \to M$ must be trivializable over any subset of the form $M_0\coloneqq M\cap(\R^{n_x+n_y}\times (-\epsilon_0,\epsilon_0))$, for any sufficiently small $\epsilon_0>0$.
It follows that there exists a $C^{r-1}$ fiber-preserving diffeomorphism $\Ws(M_0)\cong M_0 \times \R^{n_y}$ of the form $(x,y,\epsilon) \mapsto (\tilde{x},\tilde{y},\epsilon)\coloneqq (\Ps(x,y,\epsilon), \phi(x,y,\epsilon),\epsilon)$, with $\phi(x,0,\epsilon)\equiv 0$.
Making this final coordinate change, it follows that when restricted to $\Ws(M_0)$, the system \eqref{eq:GSP_aug} takes the form:
\begin{equation}\label{eq:fen_norm_form_aug}
\begin{split}
\dot{\tilde x} &= \epsilon h(\tilde x,\epsilon), \\
\dot{\tilde y} &= \Lambda (\tilde x,\tilde y,\epsilon)\tilde y, \\
\dot{\epsilon} &= 0,
\end{split}
\end{equation}
where $(\tilde{x},\tilde{y},\epsilon) \mapsto \Lambda(\tilde x ,\tilde y,\epsilon)$ is a $C^{r-3}$ family of $n_x \times n_x$ matrices and $(\tilde{x},\epsilon)\mapsto h(\tilde{x},\epsilon)$ is\footnote{However, the maps $(\tilde{x},\epsilon)\mapsto \epsilon h(\tilde{x},\epsilon)$ and $(\tilde{x},\tilde{y},\epsilon)\mapsto \Lambda (\tilde x,\tilde y,\epsilon)\tilde {y}$ are $C^{r}$ and $C^{r-2}$, respectively. The first map is $C^r$ because $\epsilon h(x,\epsilon) \equiv f(x,F(x,\epsilon),\epsilon)$, and the right hand side is $C^r$ in $x$ and $\epsilon$.} $C^{r-1}$.
The $\dot{\tilde x}$ equation depends only on $\tilde x$ and $\epsilon$ because we are using an invariant fiber bundle trivialization for coordinates on $\Ws(M_0)$, and $\tilde{x}$ and $\tilde{\epsilon}$ are coordinates for $M_0$.
By our choice of coordinates, $\dot{\tilde x}$ is zero when $\epsilon = 0$ because $\dot{x} = 0$ when $\epsilon = 0$ --- this fact and Hadamard's Lemma implies that $\dot{\tilde x}$ is of the form $\epsilon h(\tilde x,\epsilon)$.
Hadamard's Lemma similarly implies that $\dot{\tilde{y}}$ is of the form $\Lambda(\tilde{x},\tilde{y},\epsilon)\tilde y$, because after our coordinate changes $M_0$ corresponds to the set of points in $\Ws(M_0)$ with $\tilde{y} = 0$, and also $M_0$ is positively invariant, so it must be the case that $\dot{\tilde y} = 0$ when $\tilde{y} = 0$.
Suppressing the $\dot{\epsilon} = 0$ equation, we have proven the following result, which we record here as a theorem.

\begin{Th}\label{th:GSP_global_fen_norm_form}
	Assume that $\widehat{K}_0$ can be written as the graph of a $C^r$ map $B\subset \R^{n_x}\to \R^{n_y}$, with $B$ a closed ball in $\R^{n_x}$.
	Then there exists  $\kappa > 0$ such that for any $\epsilon \in [-\kappa,\kappa]$, there is a $C^{r-1}$ fiber-preserving diffeomorphism $\varphi_\epsilon\colon \Ws(K_\epsilon)\to K_\epsilon \times \R^{n_y}$ such that in the coordinates $\tilde{x},\tilde{y}=\varphi_\epsilon(x,y)$, the system \eqref{eq:GSP-fast} takes the form
	\begin{equation}\label{eq:fen_norm_form}
	\begin{split}
	\dot{\tilde x} &= \epsilon h(\tilde x,\epsilon), \\
	\dot{\tilde y} &= \Lambda (\tilde x,\tilde y,\epsilon)\tilde y.
	\end{split}
	\end{equation}
	In the new coordinates, $K_{\epsilon}$ corresponds to $\{(\tilde{x},\tilde{y})|\tilde{y}=0\}$.
	The diffeomorphism $\rho_{\epsilon}$ is $C^{r-1}$ in $\epsilon$.
	Also, $h\in C^{r-1}$, $\Lambda\in C^{r-3}$, and the function $(\tilde{x},\epsilon)\mapsto \epsilon h(\tilde{x},\epsilon)$ is $C^{r}$.
	
	Restricting attention now to only positive values of $\epsilon> 0$, in the original slow time-scale \eqref{eq:fen_norm_form} is equivalent to 
	\begin{equation}\label{eq:fen_norm_form_slow_time}
	\begin{split}
	\tilde{x}' &= h(\tilde x,\epsilon), \\
	\epsilon \tilde{y}' &= \Lambda (\tilde x,\tilde y,\epsilon)\tilde y.
	\end{split}
	\end{equation}
\end{Th}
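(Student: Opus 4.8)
The plan is to assemble the pieces already developed in \S\ref{sec:GSP_setup}. First I would record that, after one-point compactifying the $\tilde\epsilon$-axis to $S^1$ and using the scaling parameter $\kappa$ to switch on the $\epsilon$-dependence gradually, Fenichel's persistence theorem for overflowing NAIMs \cite{fenichel1971persistence} produces, for $\kappa$ small enough, a compact inflowing $r$-NAIM $M$ contained in the manifold interior of a compact overflowing $r$-NAIM $\widehat M$ for the augmented flow \eqref{eq:GSP_aug_sat_notilde}; and since $K_0$ consists of equilibria of the $\epsilon = 0$ system, \cite[Thm~5]{fenichel1977asymptotic} gives that the local stable foliation of $M$ is $C^{r-1}$, i.e.\ $\Psl \in C^{r-1}$. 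With this in hand, Theorem~\ref{th:fiber_bundle_theorem} applies and yields a $C^{r-1}$ disk-bundle structure $\Ps\colon \Ws(M) \to M$, isomorphic as a disk bundle to $E^s$.

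Next I would straighten $M$: because $\widehat K_0$ is a graph over a closed ball $B \subset \R^{n_x}$, for $\kappa$ small $M$ is the graph of a $C^r$ map $y = F(x,\epsilon)$, and the $C^r$ change of coordinates $(x,y,\epsilon) \mapsto (x, y - F(x,\epsilon), \epsilon)$ places $M$ inside $\R^{n_x} \times \{0\} \times S^1$. The step I expect to be the crux is the triviality of the bundle: $M$ is only homotopy equivalent to $S^1$, so $\Ps\colon \Ws(M) \to M$ need not be globally trivial, and I would instead restrict to $M_0 \coloneqq M \cap (\R^{n_x + n_y} \times (-\epsilon_0, \epsilon_0))$, a $K_\epsilon$-bundle over an interval and hence contractible, over which every fiber bundle is trivial. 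This supplies a $C^{r-1}$ fiber-preserving diffeomorphism $\Ws(M_0) \cong M_0 \times \R^{n_y}$ of the form $(x,y,\epsilon) \mapsto (\tilde x, \tilde y, \epsilon) = (\Ps(x,y,\epsilon), \phi(x,y,\epsilon), \epsilon)$ with $\phi(x,0,\epsilon) \equiv 0$; shrinking $\kappa \le \epsilon_0$ ensures $[-\kappa,\kappa] \subset (-\epsilon_0,\epsilon_0)$, on which \eqref{eq:GSP_aug_sat_notilde} collapses to \eqref{eq:GSP_aug} since $\chi \equiv 1$ there.

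It then remains to read off \eqref{eq:fen_norm_form_aug} and bookkeep the regularity. In the new coordinates the $\dot{\tilde x}$-equation depends only on $(\tilde x, \epsilon)$ because the trivialization is by an \emph{invariant} fiber bundle and $(\tilde x, \epsilon)$ parametrize $M_0$; and since $\dot x = 0$ at $\epsilon = 0$, Hadamard's Lemma forces $\dot{\tilde x} = \epsilon\, h(\tilde x, \epsilon)$. Similarly $M_0 = \{\tilde y = 0\}$ is positively invariant, so $\dot{\tilde y}$ vanishes on $\tilde y = 0$ and Hadamard's Lemma gives $\dot{\tilde y} = \Lambda(\tilde x, \tilde y, \epsilon)\, \tilde y$. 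The degrees of smoothness are the delicate part: pushing the $C^r$ vector field forward by the $C^{r-1}$ trivialization costs one derivative, so the transformed system is $C^{r-2}$, whence $\Lambda \in C^{r-3}$ after the division in Hadamard's Lemma while $(\tilde x, \tilde y, \epsilon) \mapsto \Lambda \tilde y$ stays $C^{r-2}$; for the slow block, $\epsilon h(x,\epsilon) \equiv f(x, F(x,\epsilon), \epsilon)$ is manifestly $C^r$, giving $h \in C^{r-1}$ and $(\tilde x, \epsilon) \mapsto \epsilon h$ of class $C^r$. Restricting to the slice $\{\epsilon = \textnormal{const}\}$ yields $\varphi_\epsilon\colon \Ws(K_\epsilon) \to K_\epsilon \times \R^{n_y}$ with $K_\epsilon = \{\tilde y = 0\}$, and undoing the rescaling $t = \tau/\epsilon$ for $\epsilon > 0$ produces the slow-time form \eqref{eq:fen_norm_form_slow_time}. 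The only genuine obstacles are the two just highlighted: securing bundle triviality by passing to the contractible slab $M_0$, and threading the smoothness count through persistence, the $C^{r-1}$ fibration, the pushforward, and the two uses of Hadamard's Lemma; the rest is assembly of already-established results.
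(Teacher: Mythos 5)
Your proposal reproduces the paper's own argument essentially step for step: the augmentation/compactification from \S\ref{sec:GSP_setup}, Fenichel persistence plus \cite[Thm~5]{fenichel1977asymptotic} to get $\Psl \in C^{r-1}$, Theorem~\ref{th:fiber_bundle_theorem} to obtain the $C^{r-1}$ disk bundle, straightening $M$ via the graph, passing to the contractible slab $M_0$ to secure triviality, and two applications of Hadamard's Lemma with the same smoothness bookkeeping. No significant differences; this is the paper's proof.
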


\begin{Rem}
Because of our assumption that the critical manifold of \eqref{eq:GSP-slow} was a NAIM, the normal form which we were able to derive and state in Theorem \ref{th:GSP_global_fen_norm_form} appears considerably simpler than the \concept{Fenichel Normal Form} --- c.f. \cite[p.~82]{jones1995geometric}, \cite[pp.~109-111]{kaper1999systems}, \cite[p. 973]{jones2009generalized} or \cite[pp.~72--73]{kuehn2015multiple}, although our normal form actually directly follows from the general Fenichel Normal Form.	
Our contribution is that, using Theorem \ref{th:fiber_bundle_theorem}, we have shown that this normal form is valid on a neighborhood which consists of the entire union of \emph{global} stable manifolds $\cup_{\epsilon}\Ws(K_\epsilon)$, as opposed to being valid merely on the union of local stable manifolds $\cup_\epsilon \Wsl(K_\epsilon)$.
\end{Rem}

\subsection{Smooth global linearization: a stronger GSP normal form}\label{sec:GSP_linearized}
In this section we continue to assume that the critical manifold is a NAIM for \eqref{eq:GSP-slow}, but we make the following additional ``nonresonance'' assumption on the eigenvalues of the critical points. 
Let $r_{\min}(x,y)\leq r_{\max}(x,y)<0$ denote the minimum and maximum real parts of eigenvalues of $D_2g(x,y)$, where $(x,y) \in \widehat{K}_0$, and $\widehat{K}_0$ is defined following \eqref{eq:GSP-slow}.
We assume that there exist negative real constants $\alpha,\beta$ such that $2\alpha < \beta < \alpha < 0$ and
\begin{equation}\label{eq:GSP_sakamoto_eval_assumption}
\forall (x,y) \in \widehat{K}_0: \beta < r_{\min}(x,y)\leq r_{\max}(x,y) < \alpha.
\end{equation}

The payoff for this assumption is that we can obtain a $C^{r-1}$ normal form which is \emph{linear} in $\tilde{y}$, improving upon the Fenichel Normal Form \eqref{eq:fen_norm_form} significantly.
This normal form is also global in the sense that it holds on the entire union of global stable manifolds $\cup_\epsilon \Ws(K_\epsilon)$.
This is the content of the following result.

\begin{Th}\label{th:GSP_sakamoto_linearization}
	Assume that the vector field defined by \eqref{eq:GSP-fast} is $C^{r\geq 3}$, and assume that the condition \eqref{eq:GSP_sakamoto_eval_assumption}  holds for the regularized system \eqref{eq:GSP-slow}, and that $\widehat{K}_0$ can be written as the graph of a $C^r$ map $B\subset \R^{n_x}\to \R^{n_y}$, with $B$ a closed ball in $\R^{n_x}$.
	Then there exists  $\kappa > 0$ such that for any $\epsilon \in [-\kappa,\kappa]$, there is a $C^{r-1}$ fiber-preserving diffeomorphism $\varphi_\epsilon\colon \Ws(K_\epsilon)\to K_\epsilon \times \R^{n_y}$ such that in the coordinates $\tilde{x},\tilde{y}=\varphi_\epsilon(x,y)$, the system \eqref{eq:GSP-fast} takes the form
	\begin{equation}\label{eq:GSP_lin_norm_form}
	\begin{split}
	\dot{\tilde x} &= \epsilon h(\tilde x,\epsilon), \\
	\dot{\tilde y} &= A (\tilde x,\epsilon)\tilde y.
	\end{split}
	\end{equation}
	In the new coordinates, $K_{\epsilon}$ corresponds to $\{(\tilde{x},\tilde{y})|\tilde{y}=0\}$.
	The diffeomorphism $\varphi_{\epsilon}$ is $C^{r-1}$ in $\epsilon$.
	Also, $h\in C^{r-1}$, $A\in C^{r-1}$, and the function $(\tilde{x},\epsilon)\mapsto \epsilon h(\tilde{x},\epsilon)$ is $C^{r}$.
	
	Restricting attention now to only positive values of $\epsilon> 0$, in the original slow time-scale \eqref{eq:GSP_lin_norm_form} is equivalent to
	\begin{equation}\label{eq:GSP_lin_norm_form_slow_time}
	\begin{split}
	\tilde{x}' &= h(\tilde x,\epsilon), \\
	\epsilon \tilde{y}' &= A(\tilde x,\epsilon)\tilde y.
	\end{split}
	\end{equation}	
	If the condition \eqref{eq:GSP_sakamoto_eval_assumption} does not hold, then there exists a homeomorphism $\varphi_\epsilon$ such that the same result holds, but $\varphi_\epsilon$ is generally not differentiable in that case. 
\end{Th}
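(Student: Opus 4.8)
The plan is to deduce Theorem~\ref{th:GSP_sakamoto_linearization} from the inflowing global linearization Theorem~\ref{th:smooth_inflowing_global_linearization}, then repackage the resulting conjugacy exactly as in the proof of Theorem~\ref{th:GSP_global_fen_norm_form}. Recall from \S\ref{sec:GSP_setup} that, for $\kappa>0$ small, the saturated augmented system \eqref{eq:GSP_aug_sat_notilde} is a $C^r$ vector field on $\R^{n_x+n_y}\times S^1$ possessing a compact \emph{inflowing} $r$-NAIM $M$ (inside an overflowing NAIM $\widehat M$), with $M\subset Q$ a $C^r$ submanifold and --- since the critical manifold consists of equilibria --- with $\Wsl(M)\in C^{r-1}$ and $E^s\in C^{r-1}$ by Fenichel's theorem \cite[Thm~5]{fenichel1977asymptotic}. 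Because \eqref{eq:GSP_aug_sat_notilde} coincides with \eqref{eq:GSP_aug} for $|\epsilon|\le\kappa$ (hence, suppressing $\dot\epsilon=0$, with \eqref{eq:GSP-slow}), it suffices to produce a global $C^{r-1}$ conjugacy of $\Phi^t|_{\Ws(M)}$ to $\D\Phi^t|_{E^s}$ and convert it into the stated fibered normal form.

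The heart of the proof is checking that the eigenvalue hypothesis \eqref{eq:GSP_sakamoto_eval_assumption} forces the quantitative rate conditions \eqref{eq:sakamoto-rates-linearization-section} for $M$. I would do this by perturbing off the unperturbed augmented system \eqref{eq:GSP_aug_eps_zero}: on its NAIM $K_0\times S^1\subset S\times S^1$ the dynamics is \emph{trivial} (every point is fixed, as $\dot x=\dot{\tilde\epsilon}=0$ and $g\equiv 0$ there), so $\D\Phi^t|_{T(K_0\times S^1)}=\id$ and $\D\Phi^t|_{E^s_m}=e^{t\,\D_2 g(m)}$. By \eqref{eq:GSP_sakamoto_eval_assumption} the real parts of the spectrum of $\D_2 g$ lie uniformly in $(\beta,\alpha)$ on the compact $\widehat K_0$, so compactness (bounding the non-normality uniformly) yields one constant $K\ge 1$ with $K^{-1}e^{\beta t}\le\minnorm{\D\Phi^t|_{E^s_m}}\le\norm{\D\Phi^t|_{E^s_m}}\le Ke^{\alpha t}$ for $t\ge 0$. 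Since \eqref{eq:GSP_aug_sat_notilde} is $C^r$-close to \eqref{eq:GSP_aug_eps_zero} for $\kappa\ll 1$ and the NHIM data (the manifold $M$, the splitting, and the exponential rates) depend continuously on the vector field, the perturbed $M$ satisfies \eqref{eq:sakamoto-rates-linearization-section} with some $\delta'>0$ arbitrarily small and $\alpha',\beta'$ arbitrarily close to $\alpha,\beta$. The constraints $0<\delta'<-\alpha'<-\beta'$, $-\alpha'>r\delta'$, and $-\beta'<-2\alpha'-(r-1)\delta'$ of Theorem~\ref{th:smooth_inflowing_global_linearization} then all hold for $\kappa$ small: the first two are clear for $\delta'$ small, and the third holds because $2\alpha<\beta$ \emph{strictly} in \eqref{eq:GSP_sakamoto_eval_assumption}, so $2\alpha'-\beta'<0$ with room to absorb $(r-1)\delta'$. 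Theorem~\ref{th:smooth_inflowing_global_linearization} thus gives a global $C^{r-1}$ fiber-preserving conjugacy $\widehat\varphi\colon\Ws(M)\to E^s$ with $\widehat\varphi\circ\Phi^t=\D\Phi^t|_{E^s}\circ\widehat\varphi$.

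To obtain \eqref{eq:GSP_lin_norm_form}, I would follow the reductions in the proof of Theorem~\ref{th:GSP_global_fen_norm_form}: shrink $\kappa$ so that $M$ is a graph $y=F(x,\epsilon)$, change coordinates so that $M\subset\R^{n_x}\times\{0\}\times S^1$, and pass to $M_0\coloneqq M\cap(\R^{n_x+n_y}\times(-\epsilon_0,\epsilon_0))$, which is contractible since $\widehat K_0$ is; hence $E^s|_{M_0}$ admits a $C^{r-1}$ vector-bundle trivialization $E^s|_{M_0}\cong M_0\times\R^{n_y}$. Composing $\widehat\varphi$ with this trivialization and restricting to each slice $\{\epsilon=\mathrm{const}\}$ gives, for $|\epsilon|\le\kappa\le\epsilon_0$, a $C^{r-1}$ fiber-preserving diffeomorphism $\varphi_\epsilon\colon\Ws(K_\epsilon)\to K_\epsilon\times\R^{n_y}$, depending $C^{r-1}$ on $\epsilon$, that brings \eqref{eq:GSP-fast} into a flow linear in the fibers, with $K_\epsilon=\{\tilde y=0\}$. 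The $\tilde x$-equation is the slow flow on $M_0$, which vanishes at $\epsilon=0$, so Hadamard's Lemma writes it as $\epsilon h(\tilde x,\epsilon)$ with $h\in C^{r-1}$ and $(\tilde x,\epsilon)\mapsto\epsilon h(\tilde x,\epsilon)$ of class $C^{r}$, exactly as in Theorem~\ref{th:GSP_global_fen_norm_form}; and $A(\tilde x,\epsilon)$ is the generator of $\D\Phi^t|_{E^s}$ read off in the $C^{r-1}$ trivialization, which a regularity count parallel to Theorem~\ref{th:GSP_global_fen_norm_form} shows is $C^{r-1}$ (the improvement over $\Lambda\in C^{r-3}$ there is that $A$ comes directly from the linear variational flow, inheriting $C^{r-1}$ from $\D f\in C^{r-1}$, rather than from a generic pushed-forward vector field via Hadamard's Lemma). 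Form \eqref{eq:GSP_lin_norm_form_slow_time} is then the rescaling $t=\tau/\epsilon$ for $\epsilon>0$. Finally, if \eqref{eq:GSP_sakamoto_eval_assumption} fails, drop the rate computation and invoke Corollary~\ref{co:top_inflowing_global_linearization} (no spectral hypotheses) in place of Theorem~\ref{th:smooth_inflowing_global_linearization}; since $E^s\in C^{r-1}$ regardless (the critical manifold still consists of equilibria), the same $C^{r-1}$ trivialization produces \eqref{eq:GSP_lin_norm_form} as a genuine linear ODE with smooth $A$, but now $\varphi_\epsilon$ is only a homeomorphism.

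The main obstacle is the rate verification of the second paragraph: extracting the sharp inequalities among $\delta',\alpha',\beta'$ required by Theorem~\ref{th:smooth_inflowing_global_linearization} from \eqref{eq:GSP_sakamoto_eval_assumption} demands both translating eigenvalue real-part bounds on $\D_2 g$ into \emph{uniform} exponential bounds on the cocycle $\D\Phi^t|_{E^s}$ (controlling non-normality by compactness, with a single constant $K$) and choosing $\kappa$ small enough that the tangential rate $\delta'$ is dominated as needed --- the strict chain $2\alpha<\beta<\alpha<0$ being exactly the slack that makes this work. A lesser nuisance is tracking degrees of differentiability through the trivialization and Hadamard steps, but this is routine once one mirrors the bookkeeping of Theorem~\ref{th:GSP_global_fen_norm_form}.
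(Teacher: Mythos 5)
Your proposal follows essentially the same path as the paper's proof: reduce to the compact inflowing NAIM $M$ for \eqref{eq:GSP_aug_sat_notilde}, verify the rate conditions \eqref{eq:sakamoto-rates-linearization-section}, invoke Theorem~\ref{th:smooth_inflowing_global_linearization}, trivialize $E^s$ over the contractible slab $M \cap (\R^{n_x+n_y}\times[-\kappa,\kappa])$, read off the linear normal form, and fall back to Corollary~\ref{co:top_inflowing_global_linearization} when \eqref{eq:GSP_sakamoto_eval_assumption} fails. You supply more detail than the paper at the rate-verification step (explicitly noting that at $\epsilon=0$ the base dynamics on $K_0\times S^1$ is trivial, so $\D\Phi^t|_{E^s_m}=e^{t\D_2 g(m)}$, extracting a uniform $K$ by compactness, then invoking continuity of NHIM data under $C^1$-small perturbation to get $\delta'$ small and $\alpha',\beta'$ close to $\alpha,\beta$) — the paper compresses this into a single sentence. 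Your remark contrasting $A\in C^{r-1}$ with $\Lambda\in C^{r-3}$ is also a sensible gloss, though you should be careful with the claim that the generator of the linear flow "inherits $C^{r-1}$" without any derivative loss through the trivialization of the $C^{r-1}$ bundle $E^s$: reading off $A$ involves differentiating the trivialization along $f$, which on its face costs a degree. The paper asserts $A\in C^{r-1}$ without spelling out the argument either, so your proof matches the paper's level of rigor here, but it is not quite a consequence of the observation you make.
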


\begin{proof}
	Consider the compact inflowing NAIM $M$ for the system \eqref{eq:GSP_aug_sat_notilde} defined on $\R^{n_x + n_y}\times S^1$. 
	As described in \S \ref{sec:GSP_global_fen_norm_form}, our assumption that $\widehat{K}_0$ is a graph implies that if $\kappa > 0$ is sufficiently small, then $M$ is the graph of a $C^r$ map $(x,\epsilon)\mapsto y$.
	Hence we may assume without loss of generality that $M\subset \R^{n_x}\times \{0\} \times S^1$.
	  
	Let $E^s$ be the stable vector bundle of $M$ and let $\Phi^t$ be the flow of the system \eqref{eq:GSP_aug_sat_notilde} on $\R^{n_x + n_y}\times S^1$.
    By continuity and compactness, it can be shown that assumption~\eqref{eq:GSP_sakamoto_eval_assumption} implies that if $\kappa > 0$ is sufficiently small, then
    there exist constants $\delta > 0$ and $K \geq 1$ such that $-\alpha > r \delta$, and such that for all $t \geq 0$
    \begin{equation}\label{eq:GSP_sakamoto_rates}
	\begin{aligned}
	K^{-1}e^{-\delta t} \leq \minnorm{\D \Phi^t|_{TM}} &\leq \norm{\D \Phi^t|_{TM}} \leq K e^{\delta t}, \\
	K^{-1}e^{-\delta t} \leq \minnorm{(\D \Phi^t|_{TM})^{-1}} &\leq \norm{(\D \Phi^t|_{TM})^{-1}} \leq K e^{\delta t}, \\
	K^{-1}e^{\beta t} \leq \minnorm{\D \Phi^t|_{E^s}} &\leq \norm{\D \Phi^t|_{E^s}} \leq K e^{\alpha t}
	\end{aligned}
    \end{equation}
    uniformly on $TM$ and $E^s$.
    By Theorem \ref{th:smooth_inflowing_global_linearization}, there exists a global $C^{r-1}$ fiber-preserving diffeomorphism $\varphi\colon \Ws(M)\to E^s$  which conjugates $\Phi^t|_{\Ws(M)}$ to $\D \Phi^t|_{E^s}$ and maps $M$ diffeomorphically onto the zero section of $E^s$.
    
    Now in any local trivialization of $E^s$, the vector field generating the flow $\D\Phi^t|_{E^s}$ is of the form \eqref{eq:GSP_lin_norm_form} augmented with $\dot{\epsilon}=0$ (where  coordinates for the zero section are given by $\tilde{x}$ and coordinates for the fibers given by $\tilde{y}$).
    It follows that if we define $\varphi_\epsilon(\slot,\slot) \coloneqq \varphi(\slot,\slot,\epsilon)$, then it suffices to show that $\Ws(M)$ is trivializable over the subset $M\cap (\R^{n_x+n_y}\times [-\kappa,\kappa])$. 
    But $M\cap (\R^{n_x+n_y}\times [-\kappa,\kappa])$ is contractible since it is diffeomorphic to $K_0\times [-\kappa,\kappa]$, so  $\Ws(M)$ is indeed trivializable over $M_{\kappa}$.

    The statement about \eqref{eq:GSP_lin_norm_form_slow_time} follows easily by replacing $t$ with the rescaled slow time $\tau = \epsilon t$.
    
    Finally, to justify the last statement for the case that \eqref{eq:GSP_sakamoto_eval_assumption} does not hold, we simply apply Corollary \ref{co:top_inflowing_global_linearization} instead of Theorem \ref{th:smooth_inflowing_global_linearization}.
    This completes the proof.
\end{proof}	

\begin{Rem}\label{rem:GSP_sakamoto_codim_1}
	Assume that $n_y = \dim(y) = 1$, so that the fast variable is one-dimensional and the slow manifold is codimension-1.
	Then the eigenvalue condition \eqref{eq:GSP_sakamoto_eval_assumption} can always be made to hold by taking $\widehat{K}_0$ sufficiently small.
\end{Rem}

\begin{Rem}
	We see from the proof that, since $K_\epsilon$ is a manifold with boundary, our linearization result Theorem \ref{th:smooth_inflowing_global_linearization} for inflowing invariant manifolds is crucial.
	This is because, to the best of our knowledge, all of the linearization results in the literature assume a  boundaryless invariant manifold \cite{pugh1970linearization,takens1971partiallyhyp,robinson1971differentiable,hirsch1977,palis1977topological,sell1983linearization,sell1983vector,sakamoto1994smooth,smoothInvariant}.
\end{Rem}

\subsection{Discussion}\label{sec:GSP_discuss}
   We have proven Theorems \ref{th:GSP_global_fen_norm_form} and \ref{th:GSP_sakamoto_linearization}, both of which are statements about normal forms for slow-fast systems in the framework of geometric singular perturbation theory (GSP).
   These results assume that the slow manifold is attracting.
   
   Let us first discuss some literature regarding the Fenichel Normal Form for attracting slow manifolds, which is the subject of Theorem \ref{th:GSP_global_fen_norm_form}.
   Because of the practical benefits afforded by dimensionality reduction, there has been interest in actually \emph{computing} the coordinate change placing the system in Fenichel Normal Form for the attracting slow manifold case.
   Recently, the so-called method of straightening out fibers (SOF method) has been developed to iteratively approximate the Taylor polynomials of this coordinate change\footnote{The results of \cite{kristiansen2014_SOF_method} actually apply in more general situations, such as the case of a normally elliptic slow manifold. 
   	See \cite{kristiansen2014_SOF_method} for more details.}
   \cite{kristiansen2014_SOF_method}; similar techniques for systems near equilibria were previously developed in \cite{roberts1989appropriate,roberts2000computer}, and we also mention that it was shown in \cite{zagaris2004fast} that the Computational Singular Perturbation (CSP) method initially developed in \cite{lam1989understanding,lam1993using} iteratively approximates the first-order Taylor polynomial of this coordinate change. 
	
   Theorem \ref{th:GSP_global_fen_norm_form} does not yield a new normal form; it shows that, in the attracting slow manifold case, the domain of the coordinate change placing the system in Fenichel Normal Form actually extends to the entire global stable manifold of the slow manifold.
   This result seems to be of primarily theoretical interest.
   For example, the state-of-the-art SOF method only provides a means for computing Taylor polynomials centered at the slow manifold.
   Since these Taylor polynomials are only guaranteed to accurately approximate the coordinate near the slow manifold, they are unlikely to approximate the global coordinate change.
   Hence the global coordinate change, guaranteed to exist by Theorem \ref{th:GSP_global_fen_norm_form}, might not be explicitly computable except in special cases.
   
   On the other hand, Theorem \ref{th:GSP_sakamoto_linearization} does yield a new normal form, and also shows that the domain of the associated coordinate change extends to the entire global stable manifold.
   In order for the coordinate change to be differentiable, some additional spectral conditions \eqref{eq:GSP_sakamoto_eval_assumption} need to be satisfied, although these are automatically satisfied on a small enough domain of the slow manifold in the codimension-1 case (see Remark \ref{rem:GSP_sakamoto_codim_1}). 
   The payoff is that this normal form is \emph{linear} in the fast variables.
   Furthermore, by combining the SOF method of \cite{kristiansen2014_SOF_method} with additional normal form computations \cite{guckenheimer1983nonlinear,roberts1989appropriate,roberts2000computer}  for the fast variable, it seems to us that it should be possible in principle to compute the Taylor polynomials of this coordinate change in a systematic way.
   We hope to explore this in future work.
   Of course, computing this coordinate system \emph{globally} suffers the same difficulties mentioned in the previous paragraph.	
   Finally, we observe that our normal form is quite similar in form to the dynamics produced by ``high-gain'' nonlinear control schemes --- suggesting that linearly controlled fast variables are an inherent feature of a broad class of systems, rather than a convenient requirement imposed by control theorists.
	
\subsection{Example}\label{sec:GSP_example}
In this section, we consider an example of a forced pendulum with damping.
This example was chosen so that the natural state space is not Euclidean.
This will allow us to illustrate Theorems \ref{th:fiber_bundle_theorem}, \ref{th:global_linearization}, and \ref{th:smooth_inflowing_global_linearization} by directly applying these theorems to obtain stronger results than those obtainable via Theorems \ref{th:GSP_global_fen_norm_form} and \ref{th:GSP_sakamoto_linearization}, which we formulated for dynamics on a Euclidean space. 
 
We allow the damping coefficient of the pendulum to be a function of the pendulum angle, and consider an applied torque which depends on the pendulum angle and time.
We assume that the applied torque is periodic in time, and for simplicity we assume that the period is $2\pi$. 
Specifically, we consider the equations of motion
\begin{align}\label{eq:GSP_pendulum_equation}
 \epsilon\theta'' + \frac{\epsilon g}{l}\sin\theta + c(\theta)\theta' = \tau(\theta,t),
\end{align}
where $\epsilon$ is the pendulum mass which we assume to be small\footnote{Strictly speaking, in a physical context we should define $\epsilon$ to be a dimensionless quantity in order to refer to it as ``small'' in an absolute sense. However, this will cause no problem whatsoever for applying and illustrating our results, and we therefore do not bother with this.}, $l$ is the pendulum length, $g$ is the acceleration due to gravity, $c$ is the angle-dependent damping coefficient, and $\tau(\theta,t)$ is the applied torque --- not to be confused with the slow time variable that is also denoted by $\tau$ with some abuse of notation.
We are assuming that $\forall \theta,t: \tau(\theta, t + 2\pi) = \tau(\theta,t)$.
We define the angular velocity $\omega\coloneqq \theta'$.
The periodicity of $\tau$ allows us to introduce a circular coordinate $\alpha$ and write \eqref{eq:GSP_pendulum_equation} in the following extended state space form:
\begin{equation}\label{eq:GSP_pendulum_eq_ss_slow_time}
\begin{split}
\theta' &= \omega\\
\alpha' &= 1\\
\epsilon\omega' &= -\frac{\epsilon g}{l}\sin\theta - c(\theta)\omega + \tau(\theta,\alpha) .
\end{split}
\end{equation}
We consider $(\theta,\alpha)$ to be angle coordinates on the two-torus $T^2\coloneqq S^1\times S^1$, so that the state space is $T^2 \times \R$.
As in \S \ref{sec:GSP_setup}, for $\epsilon \neq 0$ this ``slow time'' system is equivalent via a time-rescaling $t = \tau/\epsilon$ to the ``fast time'' system
\begin{equation}\label{eq:GSP_pendulum_eq_ss_fast_time}
\begin{split}
\dot{\theta} &= \epsilon\omega\\
\dot{\alpha} &= \epsilon\\
\dot{\omega} &= -\frac{\epsilon g}{l}\sin\theta - c(\theta)\omega + \tau(\theta,\alpha) .
\end{split}
\end{equation}
To relate this to our earlier notation from \S \ref{sec:GSP_setup}, here $(\theta,\alpha)$ is playing the role of $x$ and $\omega$ is playing the role of $y$.
For $\epsilon = 0$, the set of critical points of \eqref{eq:GSP_pendulum_eq_ss_fast_time}
are given by $S\coloneqq \{(\theta,\alpha,\omega)\colon c(\theta)\omega = \tau(\theta,\alpha)\}$.

Let us first consider the special case of a constant positive damping coefficient $c(\theta) \equiv c_0 > 0$.
Then $S$ is the graph of the map $F_0(\theta,\alpha)\coloneqq \frac{1}{c_0}\tau(\theta,\alpha)$.
We henceforth assume that $\tau \in C^r$, with\footnote{Even if $r = \infty$, we can only derive results for a finite smoothness degree. This is because persistent NHIMs generally have only a finite degree of smoothness, even if the dynamics are $C^\infty$ and the spectral gap is infinite \cite[Remark 1.12]{eldering2013normally}.} $3 \leq r < \infty$.
It follows that $S$ is a $C^r$ manifold diffeomorphic to the torus $T^2$.
Furthermore, the eigenvalues of all critical points in the critical manifold $S$ are readily checked to be $(0,0,-c_0)$, with the zero eigenvalues corresponding to the tangent spaces of $S$ and $-c_0$ corresponding to $\text{span}\{(0,0,1)\}$.
Therefore, $S$ is an $r$-NAIM for \eqref{eq:GSP_pendulum_eq_ss_fast_time} when $\epsilon = 0$.
Since $\partial S = \varnothing$, there exists $\epsilon_0 > 0$ such that for all $0 \leq \epsilon \leq \epsilon_0$, there is a unique persistent NAIM $S_\epsilon$ close to $S$, with $S_0 = S$.
As in \S \ref{sec:GSP_global_fen_norm_form}, $S_\epsilon$ is the graph of a $C^r$ map $\omega = F(\theta,\alpha,\epsilon)$ with $F_0 = F(\slot,\slot,0)$.

Using a technique from \cite{smith1999perturbation}, we next prove the following proposition.

\begin{Prop}\label{prop:GSP_pend_GAS_open}
	For all sufficiently small $\epsilon > 0$, $S_\epsilon$ is globally asymptotically stable.
	In other words, for all sufficiently small $\epsilon>0$, we have $\Ws(S_\epsilon) = T^2 \times \R$.
\end{Prop}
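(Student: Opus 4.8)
The plan is to produce, for all sufficiently small $\epsilon>0$, a proper strict Lyapunov function for $S_\epsilon$ on the \emph{entire} state space $T^2\times\R$; since $S_\epsilon$ is a compact invariant set, this immediately yields global asymptotic stability, i.e.\ $\Ws(S_\epsilon)=T^2\times\R$. The construction rests on two facts recalled above: (i) the $\omega$-equation in \eqref{eq:GSP_pendulum_eq_ss_fast_time} is affine in the fast variable $\omega$, because the damping $c(\theta)\equiv c_0$ is constant; and (ii) $S_\epsilon$ is the graph $\{\omega=F(\theta,\alpha,\epsilon)\}$ of a $C^r$ function $F$ on the compact torus $T^2$, with $F(\,\cdot\,,\,\cdot\,,\epsilon)\to F_0$ in $C^1(T^2)$ as $\epsilon\to 0$ by smooth dependence of the persistent NAIM on the parameter.

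First I would pass to the error coordinate $v(\theta,\alpha,\omega)\coloneqq\omega-F(\theta,\alpha,\epsilon)$, so that $S_\epsilon=v^{-1}(0)$, and compute $\dot v$ along the flow of \eqref{eq:GSP_pendulum_eq_ss_fast_time}. Substituting $\omega=F$ into the $\dot\omega$-equation and equating with $\tfrac{d}{dt}F=\epsilon\omega\,\partial_\theta F+\epsilon\,\partial_\alpha F$ gives the invariance identity $\epsilon F\,\partial_\theta F+\epsilon\,\partial_\alpha F=-\tfrac{\epsilon g}{l}\sin\theta-c_0F+\tau$ on $T^2$; writing a general point as $\omega=F+v$ and using this identity, all the $F$-dependent terms cancel and one is left with the \emph{exactly linear} relation
\begin{equation*}
\dot v=-\bigl(c_0+\epsilon\,\partial_\theta F(\theta,\alpha,\epsilon)\bigr)\,v
\end{equation*}
valid on all of $T^2\times\R$. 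Since $\partial_\theta F(\,\cdot\,,\,\cdot\,,\epsilon)$ converges uniformly to $\partial_\theta F_0$ and $T^2$ is compact, there is $\epsilon_1>0$ such that $c_0+\epsilon\,\partial_\theta F\ge c_0/2>0$ on $T^2$ for all $0<\epsilon\le\epsilon_1$. Then $V\coloneqq v^2$ is a nonnegative $C^r$ function on $T^2\times\R$ with $V^{-1}(0)=S_\epsilon$; it is proper, because $F$ is bounded while $T^2$ is compact, so each $\{V\le c\}$ lies in a compact slab $T^2\times[-R,R]$; and it satisfies $\dot V=-2(c_0+\epsilon\,\partial_\theta F)V\le -c_0V$, hence $\dot V<0$ off $S_\epsilon$.

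Finally I would invoke the standard Lyapunov criterion for a compact invariant set: the sublevel sets $\{V\le c\}$ are positively invariant and, by properness of $V$, form a neighborhood basis of $S_\epsilon$, which gives Lyapunov stability; and $V(\Phi^t(x))\le V(x)\,e^{-c_0t}\to 0$ for every $x$, so $v(\Phi^t(x))\to 0$, i.e.\ $\mathrm{dist}(\Phi^t(x),S_\epsilon)\to 0$, which gives global attraction. Hence $S_\epsilon$ is globally asymptotically stable and $\Ws(S_\epsilon)=T^2\times\R$. I do not expect any serious obstacle here: the only real bookkeeping is the $\epsilon$-uniformity of the bound on $\partial_\theta F$, which follows from continuous dependence of the persistent graph on $\epsilon$. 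The exact cancellation producing the linear equation for $v$ is the feature that makes this case so clean; for a nonconstant damping $c(\theta)$ that cancellation fails, $v^2$ then only certifies ultimate boundedness into a neighborhood of $S$ that shrinks with $\epsilon$, and one would finish by checking that this neighborhood lies in $\Wsl(S_\epsilon)$ and arguing via invariance — this is the trapping-region plus LaSalle-type version of the technique of \cite{smith1999perturbation}.
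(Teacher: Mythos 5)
Your proof is correct, and the computation producing the exactly linear variational equation $\dot v = -(c_0 + \epsilon\,\partial_\theta F)\,v$ does go through as you describe: substituting $\omega = F+v$ into the $\dot\omega$-equation and using the invariance identity for the graph $F$ cancels all the $F$-dependent terms because the $\dot\omega$-equation is affine in $\omega$ with constant coefficient $-c_0$. The properness of $V=v^2$, the uniform-in-$\epsilon$ lower bound on $c_0+\epsilon\,\partial_\theta F$ from compactness and joint $C^r$-dependence of $F$ on $(\theta,\alpha,\epsilon)$, and the ensuing global attraction are all handled correctly.

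This is, however, a genuinely different route from the paper's. The paper does not construct a global Lyapunov function; instead it runs a soft perturbation argument in the spirit of \cite{smith1999perturbation}: it first exhibits a compact, positively invariant slab $D_\eta = \{|\omega|<\eta\}$ that every trajectory enters in finite time (using only the sign of $\dot\omega$ for large $|\omega|$), and then argues by contradiction with compactness and continuous dependence on $(\epsilon, t, \text{initial condition})$ --- if $S_{\epsilon_n}$ failed to attract $D_\eta$ for $\epsilon_n\to 0$, a limit point would contradict global attraction of $S_0$ at $\epsilon=0$. Your approach buys more: it is fully constructive and yields uniform exponential convergence $V(\Phi^t(x))\le V(x)e^{-c_0 t}$ on the whole space, with an explicit rate. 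The paper's approach buys generality: it never uses the affine structure in $\omega$ (hence does not need $c(\theta)$ constant), and adapts verbatim to the nonconstant-damping example treated later in the section --- exactly the limitation you already flag at the end of your proposal, where the exact cancellation fails and $v^2$ would certify only ultimate boundedness, requiring a LaSalle-type finish like the one the paper uses throughout.
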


\begin{proof}
	We already know that $S_\epsilon$ is locally asymptotically stable for $\epsilon>0$ sufficiently small, so it suffices to show that $S_\epsilon$ is globally attracting for $\epsilon>0$ sufficiently small.
	We fix any $\epsilon_0 >0$ and define
	\begin{equation*}
	\eta\coloneqq \frac{1}{c_0}\left(\frac{\epsilon_0 g}{l} + \max_{(\theta,\alpha)\in T^2}|\tau(\theta,\alpha)| + 1\right).
	\end{equation*}
	Note that for all $0 \leq \epsilon \leq \epsilon_0$, the compact subset 
	\begin{equation*}D_\eta\coloneqq \left\{(\theta,\alpha,\omega)\colon|\omega| < \eta \right\}
	\end{equation*}
	of $T^2 \times \R$  is positively invariant, and every point in $(T^2\times \R)\setminus D_\eta$ will flow into $D_\eta$ in some finite time; indeed, $\dot{\omega}<-1$ on $(T^2 \times \R_{\geq 0})\setminus D_\eta$, $\dot{\omega}>1$ on $(T^2 \times \R_{\leq 0})\setminus D_\eta$, and the vector field points inward at $\partial D_\eta$.
	Therefore it suffices to show such that $S_\epsilon$ attracts all states in $D_\eta$ for sufficiently small $\epsilon > 0$.
	Next, by the same reasoning as in \S \ref{sec:GSP_setup}, we know that the compact set $\bigcup_{0 \leq \epsilon \leq \epsilon_0} S_\epsilon$ is locally asymptotically stable for the augmented dynamics (adding $\dot{\epsilon} = 0$) on $T^2 \times \R \times \R$.
	Hence there exists $\delta > 0$ such that for all $\epsilon>0$ sufficiently small, the basin of attraction of $S_\epsilon$ contains the set $N_\delta$ of points $(\theta,\alpha,\omega)\in T^2 \times \R$ satisfying $|\omega-F(\theta,\alpha,\epsilon)| < \delta$.
	In order to obtain a contradiction, suppose that there exist arbitrarily small values of $\epsilon > 0$ such that $S_\epsilon$ does not attract all states in $D_\eta$, and let $\Phi^t_{\epsilon}$ denote the flow of \eqref{eq:GSP_pendulum_eq_ss_fast_time}.
	Then there exist sequences $(\epsilon_n)_{n\in \N}$ and $(\theta_n,\alpha_n,\omega_n)_{n\in \N}\subset D_\eta$ such that $\epsilon_n \to 0$ and $\forall t > 0, n > 0: \Phi^{t}(\theta_n,\alpha_n,\omega_n) \not \in N_\delta$.
	Since $D_\eta$ is compact, by passing to a subsequence we may assume that $(\theta_n,\alpha_n,\omega_n) \to (\theta_0,\alpha_0,\omega_0) \in D_\eta$.
	Since $S_0$ is globally asymptotically stable for $\Phi^t_{0}$, for all sufficiently large $t > 0$, $\Phi^t_{0}(\theta_0,\alpha_0,\omega_0)\in N_{\delta/2}$.
	By continuity of the map $(t,\epsilon,\theta,\alpha,\omega)\mapsto \Phi^t_{\epsilon}(\theta_\epsilon,\alpha_\epsilon,\omega_\epsilon)$, it follows that for all  sufficiently large $t,n >0$, $\Phi^t_{\epsilon_n}(\theta_n,\alpha_n,\omega_n) \in N_\delta$.
	This is a contradiction, showing that for all sufficiently small $\epsilon > 0$, $\Ws(S_\epsilon) = T^2 \times \R$ for the dynamics \eqref{eq:GSP_pendulum_eq_ss_fast_time}. 
	
\end{proof}

Because the eigenvalues of the critical manifold are $(0,0,-c_0)$, after taking $\epsilon_1$ smaller if necessary we see that the $r$-center bunching conditions \eqref{eq:fiber-bundle-cor-center-bunching} are satisfied.
Therefore, Proposition \ref{prop:GSP_pend_GAS_open} and Corollary \ref{co:fiber-bundle-NAIM-center-bunching} of Theorem \ref{th:fiber_bundle_theorem} show that there exists a a $C^{r-1}$ diffeomorphism $\varphi_\epsilon\colon T^2 \times \R \to T^2 \times \R$ mapping $S_\epsilon$ onto $T^2\times \{0\}$ and mapping stable fibers of $S_\epsilon$ onto sets of the form\footnote{Here, and during the rest of this example, we are using the fact that the normal bundle --- and hence also the stable bundle $E^s$ --- of the slow manifold is trivial.} $(\theta,\alpha)\times \R$.
Using the coordinates $\tilde{\theta},\tilde{\alpha}, \tilde {\omega}=\varphi_\epsilon(\theta,\alpha,\omega)$ and changing back to the original time scale, \eqref{eq:GSP_pendulum_eq_ss_slow_time} takes the form
\begin{equation}\label{eq:GSP_pend_fen_normal_form}
\begin{split}
\tilde\theta' &= F(\tilde\theta,\tilde\alpha,\epsilon)\\
\tilde\alpha' &= 1\\
\epsilon\tilde\omega' &= \Lambda(\tilde\theta,\tilde\alpha,\tilde\omega,\epsilon)\tilde{\omega},
\end{split}
\end{equation}
for some function $\Lambda$.
The same reasoning as in \S \ref{sec:GSP_global_fen_norm_form} can be used to show that $\varphi_\epsilon$ is jointly $C^{r-1}$ in all variables including $\epsilon$.
This result should be compared with Theorem \ref{th:GSP_global_fen_norm_form}, which was formulated for dynamics on a Euclidean space.
We see that Theorem \ref{th:fiber_bundle_theorem} yields a global coordinate system on all of $T^2\times \R$ placing \eqref{eq:GSP_pendulum_eq_ss_slow_time} in the form \eqref{eq:GSP_pend_fen_normal_form}.
In contrast, without Theorem \ref{th:fiber_bundle_theorem} and using only the available results in the literature, we would have only been able to obtain such a coordinate system on a precompact neighborhood of $S_\epsilon$.

Alternatively, because the eigenvalues of the critical manifold are $(0,0,-c_0)$, after taking $\epsilon_1$ smaller if necessary we see that the stronger spectral conditions of Theorem \ref{th:smooth_inflowing_global_linearization} are also satisfied (c.f. \eqref{eq:GSP_sakamoto_eval_assumption}).
Hence Theorem \ref{th:smooth_inflowing_global_linearization} implies that there exists a global $C^{r-1}$ diffeomorphism $\psi_\epsilon\colon T^2 \times \R \to T^2 \times \R$ mapping $S_\epsilon$ onto $T^2\times \{0\}$ and mapping stable fibers of $S_\epsilon$ onto sets of the form $(\theta,\alpha)\times \R$.
Using the coordinates $\tilde{\theta},\tilde{\alpha}, \tilde {\omega}=\psi_\epsilon(\theta,\alpha,\omega)$ and changing back to the original time scale, \eqref{eq:GSP_pendulum_eq_ss_slow_time} takes the form
\begin{equation}\label{eq:GSP_pend_lin_normal_form}
\begin{split}
\tilde\theta' &= F(\tilde\theta,\tilde\alpha,\epsilon)\\
\tilde\alpha' &= 1\\
\epsilon \tilde\omega' &= A(\tilde\theta,\tilde\alpha,\epsilon)\tilde{\omega},
\end{split}
\end{equation}
for some function $A$.
The same reasoning as in \S \ref{sec:GSP_global_fen_norm_form} can be used to show that $\varphi_\epsilon$ is jointly $C^{r-1}$ in all variables including $\epsilon$.
This result should be compared with Theorem \ref{th:GSP_sakamoto_linearization}, which was formulated for dynamics on a Euclidean space.
We used Theorem \ref{th:smooth_inflowing_global_linearization} to derive \eqref{eq:GSP_pend_lin_normal_form}, but since $\partial S_\epsilon = \varnothing$ this result can also be obtained by combining Theorem \ref{th:global_linearization} with the local smooth linearization results of \cite{sakamoto1994smooth}.

Still considering \eqref{eq:GSP_pendulum_eq_ss_slow_time}, we will now consider specific choices of a non-constant damping function $c(\theta)$ and applied torque $\tau(\theta,\alpha)$ which will be chosen so that Theorem \ref{th:global_linearization} does not apply, but so that Theorem \ref{th:smooth_inflowing_global_linearization} does apply to yield a linear normal form.
For the sake of concreteness, let $c(\theta)\coloneqq \cos(\theta) + 1$ and $\tau(\theta,\alpha)\coloneqq -\sin(\theta)+(1/2)\cos(\alpha)$.
Then $c(\pi) = 0$, so it follows that
the critical set $S\coloneqq \{(\theta,\alpha,\omega)\colon c(\theta)\omega = \tau(\theta,\alpha)\}$ is not normally hyperbolic for the fast time system \eqref{eq:GSP_pendulum_eq_ss_fast_time} everywhere.
However, e.g. $c(\theta) > 1$ for $|\theta| < \pi/2$, so it follows in particular that the subset $K_0 \coloneqq \{(\theta,\alpha,\omega)\in S\colon |\theta| \leq \pi/4\}$ is $r$-normally attracting.
Furthermore, $K_0$ is inflowing for the slow time system \eqref{eq:GSP_pendulum_eq_ss_slow_time} restricted to $S$ when $\epsilon = 0$,
because $K_0$ is the graph of $F(\theta,\alpha,0)$ with
\begin{equation*}
F(\theta,\alpha,0)\coloneqq \frac{\tau(\theta,\alpha)}{c(\theta)} =  \frac{-\sin(\theta)+(1/2)\cos(\alpha)}{\cos(\theta)+1}
\end{equation*}
with $|\theta| \leq \pi/4$.
Therefore, the projection of the slow time dynamics restricted to $K_0$ are given by
\begin{equation*}
\begin{split}
\theta' &= \frac{-\sin(\theta)+(1/2)\cos(\alpha)}{\cos(\theta)+1}\\
\alpha' &= 1
\end{split}
\end{equation*}
and clearly the vector field points inward at the boundary of $\{(\theta,\alpha)\colon |\theta| \leq \pi/4\}$.
We can modify the flow locally near the boundary of any larger set $\widehat{K}_0\supset K_0$ to render $\widehat{K}_0$ overflowing,
and therefore there exists $\epsilon_0>0$ such that for all $0 \leq \epsilon \leq \epsilon_0$, $\widehat{K}_0$ (and hence also $K_0$) persists to a nearby $r$-NAIM for the fast time system \eqref{eq:GSP_pendulum_eq_ss_fast_time}.
Since inward pointing of a vector field is an open condition, after possibly shrinking $\epsilon_0$ it follows that $K_\epsilon$ is also inflowing for all $0 \leq \epsilon \leq \epsilon_0$.
Additionally, after possibly shrinking $\epsilon_0$, we see that the hypotheses of Theorem \ref{th:smooth_inflowing_global_linearization} are satisfied for $K_\epsilon$ for all $0 \leq \epsilon \leq \epsilon_0$ (check that \eqref{eq:GSP_sakamoto_eval_assumption} is satisfied on $K_0$ by using $\alpha  = -\sqrt{2}/2-1 \approx 1.7$ and $\beta = -2$, and use the fact that the hypotheses of Theorem \ref{th:smooth_inflowing_global_linearization} are open conditions).
Hence Theorem \ref{th:smooth_inflowing_global_linearization} implies that there exists a $C^{r-1}$ diffeomorphism $\psi_\epsilon\colon \Ws(K_\epsilon) \to K_\epsilon \times \R$ mapping $K_\epsilon$ onto $K_\epsilon\times \{0\}$ and mapping stable fibers of $K_\epsilon$ onto sets of the form $\{(\theta,\alpha)\}\times \R$.
Using the coordinates $\tilde{\theta},\tilde{\alpha}, \tilde {\omega}=\psi_\epsilon(\theta,\alpha,\omega)$ and changing back to the original time scale, \eqref{eq:GSP_pendulum_eq_ss_fast_time} takes the form
\begin{equation}\label{eq:GSP_pend_lin_normal_form_inflowing}
\begin{split}
\tilde\theta' &= F(\tilde\theta,\tilde\alpha,\epsilon)\\
\tilde\alpha' &= 1\\
\epsilon \tilde\omega' &= A(\tilde\theta,\tilde\alpha,\epsilon)\tilde{\omega},
\end{split}
\end{equation}
for suitable functions $A$ and $F$.
The same reasoning as in \S \ref{sec:GSP_global_fen_norm_form} can be used to show that $\varphi_\epsilon$ is jointly $C^{r-1}$ in all variables including $\epsilon$.
This result should be compared with Theorem \ref{th:GSP_sakamoto_linearization}, which was formulated for dynamics on a Euclidean space.
Here we had to use Theorem \ref{th:smooth_inflowing_global_linearization} to derive \eqref{eq:GSP_pend_lin_normal_form_inflowing}, because Theorem \ref{th:global_linearization} does not apply since $\partial K_\epsilon \neq \varnothing$.
Without Theorem \ref{th:smooth_inflowing_global_linearization} and using only the explicitly available results in the literature, we would not have been able to obtain even a local version of this coordinate system.

Finally, we note that the Taylor polynomials of the coordinate change for the normal form \eqref{eq:GSP_pend_fen_normal_form} can in principle be obtained using the SOF method, although as mentioned in \S \ref{sec:GSP_discuss} this does not help to compute the coordinates \emph{globally}. 
We do not pursue this here.
As mentioned in \S \ref{sec:GSP_discuss}, we believe it should be possible in principle to additionally compute the Taylor polynomials of the coordinate changes for the normal forms \eqref{eq:GSP_pend_lin_normal_form} and \eqref{eq:GSP_pend_lin_normal_form_inflowing}, which we hope to explore in future work.

\section{Conclusion}\label{sec:conclusion}

Stated technically, we have proven some results for NHIMs which are of two types: (i) global versions of well-known local results, and (ii) linearization results for inflowing NAIMs.
\linelabel{R1_4_b}We restricted our attention to flows.

We first showed that the global stable foliation of an inflowing NAIM is a fiber bundle, with fibers coinciding with the leaves of the global stable foliation, and that this fiber bundle is as smooth as the local stable foliation.
From that result, we deduced the corresponding result for the global (un)stable foliation of a general NHIM, though one needs to be careful in interpreting this statement as the global (un)stable manifold is generally only an immersed submanifold of $Q$.

We next considered global linearizations, and showed that the linearization result of \cite{pugh1970linearization,hirsch1977} for boundaryless NHIMs applies also to inflowing NAIMs.
Furthermore, this linearization extends to the entire global stable manifold --- inflowing NAIMs are globally linearizable, or topologically conjugate to the flow linearized at the NAIM. 
If some additional spectral gap conditions are assumed, then the global linearizing conjugacy can be taken to be $C^k$.
This extends the results of \cite{lan2013linearization} to the case of arbitrary inflowing NAIMs (although see Remark \ref{rem:mezic_remark}).
A key tool in our proof was the geometric construction of Appendix \ref{app:wormhole}, which allowed us to reduce to the boundaryless case.

We then used our theoretical results to give two applications to slow-fast systems with attracting slow manifolds, in the context of geometric singular perturbation theory (GSP).
First, using our fiber bundle theorem we extended the domain of the Fenichel Normal Form \cite{jones1994tracking,jones1995geometric,kaper1999systems}.
Second, under an additional spectral gap assumption, we derived a global smooth \emph{linear} normal form for GSP problems. 
If the slow manifold is codimension-1, this assumption can always be made to hold (after possibly shrinking the slow manifold; see Remark \ref{rem:GSP_sakamoto_codim_1}).
For this application it was essential that we proved a linearization theorem for inflowing NAIMs, since the slow manifolds appearing in slow-fast systems typically have boundary.
We then illustrated these results on an example of a mechanical system.
We noted that it might be interesting to combine the method of straightening out fibers (SOF method) of \cite{kristiansen2014_SOF_method} with additional normal form computations \cite{guckenheimer1983nonlinear,roberts1989appropriate,roberts2000computer} for the fast variable, in order to develop a systematic technique for computing the Taylor polynomials of the coordinate change for the linear normal form. 
We hope to explore this idea in future work.

Less formally, what we have shown is that the local structure next to an inflowing NAIM extends globally, in terms of structure (as a disk bundle), in its degree of smoothness, and in the fact that the dynamics are often conjugate to their linearization. 
In fact, the linearization is so robust that it can be extended consistently to yield a system linear in its fast variables throughout all sufficiently small perturbations of a singularly perturbed system.

We have considered only compact NHIMs and compact inflowing NAIMs in stating our results.
From our experience, we expect that extending these results to noncompact manifolds should be possible, but possibly quite technical.
However, our results for compact inflowing NAIMs allow our work to be applied to (for example) positively invariant compact subsets of the phase space of a mechanical system.

\appendix
	
\section{Smoothness of linear parallel transport covering an inflowing invariant manifold}\label{app:linear-par-transp}	

In this appendix, we show that a $C^r$ flow on an inflowing invariant manifold $M \subset Q$ can always be lifted to a $C^r$ linear flow on $E$, where $\pi\colon E\to M$ is any $C^r$ subbundle of $T Q|_M$.  
For the definition of a fiber metric \cite[p. 116]{kobayashi1963foundationsV1} see Def. \ref{def:fiber_metric} in Appendix \ref{app:fiber_bundles}.

\begin{Lem}\label{lem:smooth_parallel_transport}
	Let $M$ be a $C^r$ inflowing invariant submanifold of $Q \in C^\infty$ for the flow $\Phi^t$ generated by a $C^r$ vector field.
	Let $\pi\colon E\to M$ be a $C^r$ subbundle of $TQ|_M$ equipped with any fiber metric $g$.
	Then there exists a $C^r$ fiber metric $h$ on $E$ arbitrarily close to $g$ and a $C^r$ flow  $\Pi^t$ on $E$ such that for all $t>0$, $\Pi^t\coloneqq   \Pi(t,\slot)$ is an isometry with respect to the fiber metric $h$, covering $\Phi^t|_M$.
\end{Lem}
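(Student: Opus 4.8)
The plan is to obtain $\Pi^t$ as parallel transport along the trajectories $t\mapsto\Phi^t(m)$ with respect to a metric connection, but carried out on an auxiliary \emph{smooth} bundle rather than on $E$ itself, in order to preserve the full $C^r$ regularity. The naive recipe --- fix a metric connection $\nabla$ on $(E,h)$ and transport --- does produce a linear flow covering $\Phi^t|_M$ which is an isometry of $h$; the difficulty is purely one of smoothness: because $E$ is only $C^r$, every connection on $E$ has $C^{r-1}$ connection forms (the $g^{-1}\,\D g$ term in the change-of-frame rule), so the transport would be merely $C^{r-1}$. The way around this is to exploit that $E$ is a subbundle of the $C^\infty$ bundle $\T Q$.

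First I would extend $E$ to a $C^r$ subbundle $\widehat E$ of $\T Q$ over an open neighborhood $U$ of $M$ in $Q$ (a routine extension; e.g.\ regard $E$ as a $C^r$ section of the Grassmann bundle $\mathrm{Gr}_k(\T Q)\to Q$ over $M$, $k=\rank E$, and extend it). Next, since $\T Q|_U$ is a $C^\infty$ bundle over the $C^\infty$ manifold $U$, $C^r$-approximate $\widehat E$ by a $C^\infty$ subbundle $\widehat E^\infty\subset \T Q|_U$. Equip $\widehat E^\infty$ with a $C^\infty$ fiber metric $\widehat h$ and a $C^\infty$ metric connection $\widehat\nabla$, both of which exist because $\widehat E^\infty$ is $C^\infty$. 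Let $\widehat\Pi^t$ be $\widehat\nabla$-parallel transport along $t\mapsto\Phi^t(m)$; this is automatically a linear flow on $\widehat E^\infty|_M$ covering $\Phi^t|_M$ and an $\widehat h$-isometry. The crucial point is that $\widehat\Pi^t$ is $C^r$ jointly in $(t,m)$: in a local frame it is the fundamental solution of the linear ODE $\dot v = -A(t,m)v$ with $A(t,m)=\widehat\Gamma_{\Phi^t(m)}\!\big(f(\Phi^t(m))\big)$, and since the Christoffel datum $\widehat\Gamma$ is $C^\infty$ on $U$ while $(t,m)\mapsto\big(\Phi^t(m),f(\Phi^t(m))\big)$ is $C^r$ into $\T U$, the coefficient $A$ is $C^r$, hence so is the solution.

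To return to $E$, let $\Lambda\colon E\to\widehat E^\infty|_M$ be the restriction to $E$ of the fiberwise orthogonal projection $\T Q|_M\to\widehat E^\infty|_M$ with respect to the Riemannian metric of $Q$; if $\widehat E^\infty$ is taken $C^0$-close enough to $\widehat E$, whose restriction to $M$ is $E$, then $\Lambda$ is a $C^r$ vector bundle isomorphism covering $\id_M$. Set
\begin{equation*}
  \Pi^t \coloneqq \Lambda^{-1}\circ\widehat\Pi^t\circ\Lambda
  \qquad\text{and}\qquad
  h \coloneqq \Lambda^*\widehat h .
\end{equation*}
Since $\Lambda$ covers $\id_M$, conjugation by it preserves the cocycle (flow) property, fiberwise linearity, and the covering of $\Phi^t|_M$; and $\Pi^t$ is an $h$-isometry because $\widehat\Pi^t$ is an $\widehat h$-isometry. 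Both $\Pi^t$ and $h$ are $C^r$. To make $h$ arbitrarily $C^0$-close to $g$ one mollifies $g$ to a $C^r$ metric, extends it over $\widehat E$ and then over $\widehat E^\infty$, and chooses $\widehat h$ near this extension and $\widehat E^\infty$ near $\widehat E$; composing the estimates yields $h$ as close to $g$ as wanted. As in the rest of the paper, one reads these ``flows'' over the inflowing manifold with the usual caveat about domains: $\Phi^t|_M$, hence $\Pi^t$, is defined for all $t\ge 0$ on $M$ and only partially for $t<0$.

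The formal parts --- that $\Pi^t$ is a flow, covers $\Phi^t|_M$, is linear on fibers, and is an isometry --- come for free from the construction. The one genuinely delicate point, and the reason for the detour through a smooth bundle, is keeping the full degree $C^r$: differentiating a $C^\infty$ bundle datum (the projection onto $\widehat E^\infty$, or $\widehat\Gamma$) along the flow still gives a $C^r$ object, whereas differentiating a merely-$C^r$ datum (the projection onto $E$, if one worked on $E$ directly) along the flow costs a degree and leaves only $C^{r-1}$. The secondary, purely technical, chore is tracking the successive approximations so that $h$ stays close to $g$.
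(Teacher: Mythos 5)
Your construction is correct, and it reaches the conclusion by a genuinely different route than the one used in Appendix~\ref{app:linear-par-transp}. The paper stays intrinsic to the invariant manifold: it first enlarges $M$ to the boundaryless, locally invariant $M_\epsilon=\Phi^{-\epsilon}(\interior M)$, then invokes the Palis trick that $M_\epsilon$ carries a compatible $C^{r+1}$ differentiable structure in which $f|_{M_\epsilon}$ is still $C^r$, smooths the bundle to a $C^{r+1}$ bundle over this smoothed base via a universal-bundle argument, takes a $C^{r+1}$ fiber metric with a compatible $C^r$ connection there, transports, and conjugates back through the $C^r$ bundle isomorphisms $G_1\circ G_2$. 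You instead exploit the ambient $C^\infty$ structure of $Q$: extend $E$ over a neighborhood $U$, approximate by a $C^\infty$ subbundle $\widehat E^\infty\subset \T Q|_U$ with a $C^\infty$ metric connection, transport along the forward trajectories (which stay over $M$), and conjugate back through the orthogonal-projection isomorphism $\Lambda$. Both proofs turn on the same mechanism --- make the connection data strictly smoother than $C^r$ so that composing its Christoffel data with the merely $C^r$ flow does not cost a derivative, then transfer by a $C^r$ bundle isomorphism and pull back the metric --- so your regularity bookkeeping for the transport ODE and the closeness chase for $h$ versus $g$ are sound. What your version buys is the avoidance of the compatible-smoothing theorem and the universal-bundle smoothing step, at the price of only routine extension/approximation facts in $Q$; it also dispenses with the backward extension to $M_\epsilon$, since your connection lives over a full open neighborhood and forward transport never leaves $M$, so boundary points of the inflowing manifold cause no trouble. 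What the paper's version buys is that its smoothing is of the abstract structures over (an extension of) $M$ itself, so the argument would survive essentially unchanged if $E$ were an abstract $C^r$ bundle rather than a subbundle of $\T Q|_M$; your argument leans on the ambient embedding, which the lemma's hypotheses do provide.
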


\begin{proof}
	We define a $C^r$ submanifold (without boundary) $M_\epsilon$ by the formula $M_\epsilon\coloneqq \Phi^{-\epsilon}(\interior M)$, with $\interior M$ denoting the manifold interior of $M$.
	Because $M$ is inflowing invariant, $M \subset M_\epsilon$.
	We extend $E$ arbitrarily to a $C^r$ subbundle $E_\epsilon \supset E$ of $TQ|_{M_\epsilon}$.
	In \cite[App.~1]{palis1977topological} it is shown that $M_\epsilon$ has a compatible $C^{r+1}$ differentiable structure with respect to which the vector field $f$ restricted to $M_\epsilon $ is\footnote{The theorem in \cite[App.~1]{palis1977topological} is stated for a $C^1$ invariant manifold and $C^1$ vector field, but the same proof works, mutatis mutandis, for a \emph{locally} invariant $C^r$ manifold and $C^r$ vector field, which is our situation here.} $C^r$.

	Denote $M_\epsilon  $ with this $C^{r+1}$ structure by $\widetilde M_\epsilon  $, and let $I\colon \widetilde M_\epsilon  \to M_\epsilon  $ be the $C^r$ diffeomorphism which is the identity map when viewed as a map of sets.
	Thus the pullback bundle $I^* E_\epsilon$ is a $C^r$ vector bundle over $\widetilde M_\epsilon  $ which is $C^r$ isomorphic to $E_\epsilon$ via a vector bundle isomorphism $G_1\colon I^* E_\epsilon \to E_\epsilon$ covering $I$ \cite[p.~97]{hirsch1976differential}.
	Furthermore, a standard argument using a universal bundle shows that there exists a $C^{r+1}$ vector bundle $\widetilde{I^*E_\epsilon}$ over $\widetilde M_\epsilon  $ and a $C^r$ vector bundle isomorphism $G_2\colon \widetilde{I^*E_\epsilon} \to I^*E_\epsilon$ covering the identity \cite[p.~101, Thm~3.5]{hirsch1976differential}.
	This situation is depicted in the following diagram.
	\begin{equation}\label{eq:diagram_pullback_E}
	\begin{tikzcd}
	&\widetilde{I^*E_\epsilon}\arrow{r}{G_2}\arrow{d}{P}
	&I^*E_\epsilon \arrow{r}{G_1}\arrow{d}{P}  & E_\epsilon \arrow{d}{\pi}\\
	&\widetilde M_\epsilon   \arrow{r}{\id_{\widetilde M_\epsilon  }}&\widetilde M_\epsilon   \arrow{r}{I}  &M_\epsilon  
	\end{tikzcd}
	\end{equation}
	Now pull back the fiber metric $g$ on $E_\epsilon$ to $\widetilde{I^*E_\epsilon}$. That is, define $\tilde{g}$ through
	\begin{equation*}
	\tilde{g}(v,w) = G^*(g)(v,w) = g(G (v),G (w)),
	\end{equation*}
	where $G = G_1 \circ G_2$.
	Now choose a $C^{r+1}$ fiber metric $\tilde{h}$ on $\widetilde{I^*E_\epsilon}$ that is close to $\tilde{g}$.
	Let $\widetilde \nabla\colon \Gamma(\T \widetilde M_\epsilon   \otimes \widetilde{I^*E_\epsilon})\to \Gamma(\widetilde{I^*E_\epsilon})$ be a $C^r$ affine connection compatible with the metric $\tilde{h}$ \cite[Chap.~3]{kobayashi1963foundationsV1}.
	Then the map of parallel transport along solution curves of $f$, $\widetilde \Pi^t\colon \widetilde{I^*E_\epsilon} \to \widetilde{I^*E_\epsilon}$, is an isometry since $\widetilde \nabla$ is compatible with $\tilde{h}$, and it is $C^r$ because with respect to local coordinates $x_1,\ldots, x_{n_m}$ and any local frame $(\sigma_1,\ldots, \sigma_{n_s})$, the parallel transport equation takes the form
	\begin{equation}\label{eq:parallel_trans}
	\sum_{k}\left(\frac{d}{dt}v^k\circ \Phi^t(x) + \sum_{i,j}\Gamma^k_{i,j}(v^i f^j)\circ \Phi^t(x)\right)\sigma_k\circ \Phi^t(x) = 0,
	\end{equation}
	where the Christoffel symbols $\Gamma^k_{i,j}$ defined by
	\begin{equation*}
	\widetilde \nabla_{\frac{\partial}{\partial x^i}}\sigma_j = \sum_k \Gamma^k_{i,j} \sigma_k
	\end{equation*}
	are $C^r$ functions $\Gamma^k_{i,j}\colon \widetilde{M}_\epsilon\to \R$.
	Since $f$ is a $C^r$ vector field with respect to the smooth structure of $\widetilde M_\epsilon   $, it follows that \eqref{eq:parallel_trans} defines a $C^r$ ODE for $v$ in local coordinates.
	The ODE theorems on existence, uniqueness, and smooth dependence on parameters imply that the solution to \eqref{eq:parallel_trans} depends smoothly on $x, v(x)$, and $t$. Thus $\widetilde \Pi\colon \R \times \widetilde{I^*E_\epsilon} \to \widetilde{I^*E_\epsilon}$ is indeed $C^r$.
	
	Next, define the fiber metric $h$ on $E_\epsilon$ by setting $h \coloneqq    (G_2^{-1}\circ G_1^{-1})^* \tilde{h}$ and define $\Pi\colon \R \times E_\epsilon \to E_\epsilon$ via
	\begin{equation}
	\Pi^t(v)\coloneqq    \Pi(t,v)\coloneqq    (G_1 \circ G_2) \circ \widetilde \Pi^t \circ (G_1\circ G_2)^{-1}(v).
	\end{equation}
	Since $\tilde{h}$ was arbitrarily close to $\tilde{g}$, the same holds for $h$ and $g$.
	The map $\Pi$ is $C^r$ because it is the composition of smooth functions.
	For any $t \in \R$, $\widetilde \Pi^t$ is an isometry of $(\widetilde{I^*E_\epsilon},\tilde{h})$, and our choice of the pullback metric $h$ on $E_\epsilon$ implies that $G_1\circ G_2$ is an isometry into $(E_\epsilon,h)$.
	Thus for any $t \in \R$, $\Pi^t$ is a composition of vector bundle isometries and is thus an isometry of vector bundles, hence preserves $h$.
	By construction $\Pi^t$ covers $\Phi^t|_{M_\epsilon   }$.
	
	Now $M$ is positively invariant under $\Phi^t$ since $M$ is inflowing invariant, hence also $E$ is positively invariant under $\Pi^t$.
	We therefore obtain a well-defined restriction of $\Pi^t$ to $M \subset M_\epsilon$ and also restrict $h$ to $E$, completing the proof.
\end{proof}

\section{Inflowing NAIMs: reduction to the boundaryless case}\label{app:wormhole}

In this appendix, we prove a result which shows roughly that any compact inflowing NAIM can always be viewed as a subset of a compact boundaryless NAIM.
In particular, this result allows the application of various linearization theorems from the literature \cite{pugh1970linearization,smoothInvariant,hirsch1977,sakamoto1994smooth,robinson1971differentiable,palis1977topological,sell1983linearization,sell1983vector,smoothInvariant} to inflowing NAIMs as in Corollaries \ref{th:smooth_inflowing_global_linearization}  and \ref{co:top_inflowing_global_linearization} and in \S \ref{sec:global_linearization}, despite the fact that in the literature these theorems are formulated only for boundaryless invariant manifolds.
We use this result in \S \ref{sec:applications} to derive a linear normal form result for singular perturbation problems in which the critical manifold is a NAIM.

First, let us describe the intuition behind our construction.
Let $M \subset Q$ be a compact, inflowing NAIM for some vector field on $Q$, and $N \supset M$ a slight extension along the backward flow.
We rip a hole in our space $Q$ by removing a small neighborhood $U_0$ of $\partial N$.
Then we glue two copies of $Q \setminus U_0$ together at their boundaries (thought of as a ``wormhole'') creating a total space $\widehat{Q}$.
We modify the copies of $N$ slightly such that they connect through the wormhole as a smooth, compact submanifold $\widehat{N} \subset \widehat{Q}$.
Finally, we carefully modify the vector field near the wormhole so that $\widehat{N}$ is a NAIM again for the modified vector field.

This procedure is made precise in the proof of Proposition~\ref{prop:wormhole} below, but let us already introduce some more details using Figure~\ref{fig:wormhole-2d}.
A family of smooth tubular neighborhoods $U_0\subset \ldots \subset U_3$ of $\partial N$ are chosen so that the vector field $f$ points inward at each $N\setminus U_i$, and so that each $\Wsl(M)\cap U_i = \varnothing$.
We smoothly rescale $f$ inside $U_3$ to create a vector field $\tilde{f}$ such that $\tilde{f}$ is zero on $\bar{U}_2$, and we smoothly approximate $N$ inside $U_2$ to create a submanifold $\widetilde{N}$ such that $\widetilde{N}\cap U_1$ is a $C^\infty$ submanifold.
We next create a copy of $Q$, remove the subset $U_0$ from each copy to form two copies of $Q'\coloneqq Q\setminus U_0$, and let $\widehat{Q}$ be the double of $Q'$ obtained by glueing the two copies of $Q'$ along $\partial Q' = \partial U_0$, forming a ``wormhole'' between the two spaces.
Using a standard technique from differential topology, we give $\widehat{Q}$ a $C^\infty$ differential structure such that $\widehat{N}\subset \widehat{Q}$ is a $C^r$ submanifold, where $\widehat{N}$ is comprised of the two copies of $\widetilde{N}$ (this step is the reason why we needed to approximate $N$ by $\widetilde{N}$).
We give $\widehat{Q}$ a Riemannian metric which agrees with the original metric on each copy of $Q'$ except on an arbitrarily small neighborhood of $\partial Q'$.
The vector field $\hat{f}_0$, defined to be equal to $\tilde{f}$ on each copy of $Q'$, is automatically $C^r$ since it is zero on a neighborhood of $\partial Q'$.
Finally, we modify $\hat{f}_0$ inside each copy of $U_3$ to create a vector field $\hat{f}$ on $\widehat{Q}$ such that $\widehat{N}$ is an $r$-NAIM for $\hat{f}$.
We show that the resulting global stable foliation $\widehat{W}^s(M)$ for $\hat{f}$ over a copy of $M$ agrees with the global stable foliation $\Ws(M)$ for $f$, and that certain asymptotic rates for $f$ are preserved by $\hat{f}$.

\begin{figure}[htb]
	\centering
	\def\svgwidth{.8\columnwidth}
	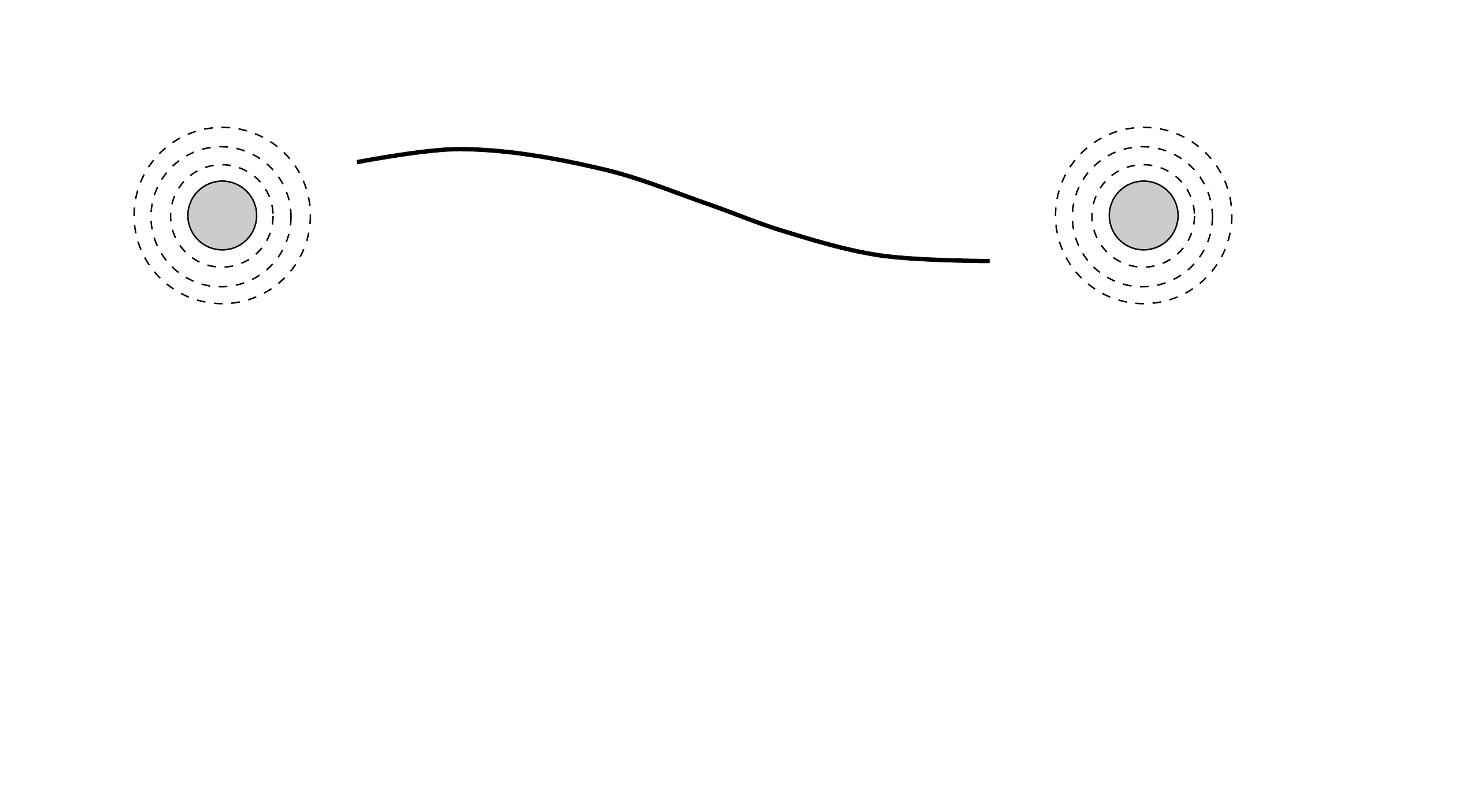
	\caption{A schematic figure of the constructions used in Prop.~\ref{prop:wormhole}.}
  \label{fig:wormhole-2d}
\end{figure}

\begin{Prop}\label{prop:wormhole}
	Let $M,N\subset Q$ be compact inflowing $r$-NAIMs, with $M$ a proper subset of the manifold interior of $N$, for the $C^{r\geq 1}$ flow $\Phi^t$ generated by the $C^{r \geq 1}$ vector field $f$ on $Q$.
	Let $U_0$ be an arbitrarily small tubular neighborhood of $\partial N$, having smooth boundary $\partial U_0$ and disjoint from $\Wsl(M)$.
	Define $\widehat{Q}$ to be the double of $Q\setminus U_0$.
	
	Then there exists a $C^\infty$ differential structure on $\widehat{Q}$ and a $C^r$ vector field $\hat{f}\colon \widehat{Q}\to T\widehat{Q}$ such that
	
	\begin{enumerate}
		\item $\widehat{f}$ is equal to $f$ on each copy of $Q\setminus U_0$, except on an arbitrarily small neighborhood of $\partial U_0$.
		\item There exists a compact and boundaryless $r$-NAIM $\widehat{N}$ for $\hat{f}$, with $\widehat{N}$ equal to $N$ on each copy of $Q\setminus U_0$, except on an arbitrarily small neighborhood of $\partial U_0$.
		\item The global stable foliation of $M$ for $f$ does not intersect $U_0$, and it coincides with the global stable foliation of $M$ for $\hat{f}$, when $M$ and $\Ws(M)$ are identified  via inclusion with subsets of a copy of $Q\setminus U_0$ in $\widehat{Q}$.
		
	\end{enumerate} 
	Let $\widehat{\Phi}^t$ be the $C^r$ flow generated by $\hat{f}$, and let $\widehat{E}^s$ be the $D\widehat{\Phi}^t|_{\widehat{N}}$-invariant stable vector bundle for the NAIM $\widehat{N}$.
    If, additionally, there exist constants $K>0$ and $\alpha < 0$ such that for all $m \in M$, $t \ge 0$ and $0 \le i \le k$ the \concept{$k$-center bunching} condition 
    \begin{equation}\label{eq:app-center-bunching}
    \norm{\D\Phi^{t}|_{T_mM}}^i\norm{\D\Phi^t|_{E_m^s}}\leq Ke^{\alpha t}\minnorm{\D\Phi^t|_{T_mM}}
    \end{equation}
    is satisfied for the original system on $Q$, then \eqref{eq:app-center-bunching} will also be satisfied with $M,E^s$, and $\Phi^t$ replaced by $\widehat{N},\widehat{E}^s$, and $\widehat{\Phi}^t$, and with $\alpha$ replaced by some different constant $\hat{\alpha} < 0$.

	Similarly, if additionally there exist constants $0 < \delta < -\alpha < -\beta$ and $K \geq 1$ such that for all $t \geq 0$
	\begin{equation}\label{eq:app-sakamoto_rates}
	\begin{aligned}
	K^{-1}e^{-\delta t} \leq \minnorm{\D \Phi^t|_{TM}} &\leq \norm{\D \Phi^t|_{TM}} \leq K e^{\delta t}, \\
	K^{-1}e^{-\delta t} \leq \minnorm{(\D \Phi^t|_{TM})^{-1}} &\leq \norm{(\D \Phi^t|_{TM})^{-1}} \leq K e^{\delta t}, \\
	K^{-1}e^{\beta t} \leq \minnorm{\D \Phi^t|_{E^s}} &\leq \norm{\D \Phi^t|_{E^s}} \leq K e^{\alpha t}
	\end{aligned}
	\end{equation}
	uniformly on $TM$ and $E^s$, then we can choose $\hat{f}$ appropriately, such that the same will be true for $\widehat{\Phi}^t$, $T\widehat{N}$, and $\widehat{E}^s$ with modified constants $0<\hat{\delta}< -\hat{\alpha}<-\hat{\beta}$  arbitrarily close to $\delta, \alpha, \beta$. 
\end{Prop}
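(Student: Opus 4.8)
The plan is to carry out the ``wormhole'' construction indicated in Figure~\ref{fig:wormhole-2d} in four stages. \emph{(1) Localize and trivialize near $\partial N$.} First choose $C^\infty$ tubular neighborhoods $U_0\Subset U_1\Subset U_2\Subset U_3$ of $\partial N$, each with smooth boundary, all disjoint from $\Wsl(M)$, and small enough that $f$ points strictly inward along each $\partial(N\setminus U_i)$; this is possible because $N$ is inflowing and strict inward-pointing is an open condition, and it makes every $N\setminus U_i$ forward invariant. Pick $\lambda\in C^\infty(Q;[0,1])$ with $\lambda\equiv 0$ on $\overline{U_2}$ and $\lambda\equiv 1$ off $U_3$, and set $\tilde f\coloneqq\lambda f$; then $\tilde f=f$ off $U_3$, $\tilde f\equiv 0$ on $\overline{U_2}$, and each $N\setminus U_i$ is still $\tilde f$-forward invariant. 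Since $N$ is only $C^r$, replace it by a submanifold $\widetilde N$ obtained from a relative approximation that is $C^\infty$ on $U_1$, coincides with $N$ off $U_2$, and — because $\tilde f\equiv 0$ on $\overline{U_2}$ — can be kept $\tilde f$-invariant.

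\emph{(2) Double the ambient space.} Set $Q'\coloneqq Q\setminus U_0$, a $C^\infty$ manifold with boundary $\partial U_0$, and let $\widehat Q$ be its double; the collar neighborhood theorem supplies a $C^\infty$ structure on $\widehat Q$, and since $\widetilde N$ is $C^\infty$ near $\partial U_0$, the union $\widehat N$ of the two copies of $\widetilde N$ is a compact boundaryless $C^r$ submanifold. Endow $\widehat Q$ with a Riemannian metric that agrees with the one on $Q$ on each copy of $Q'$ outside an arbitrarily small neighborhood of $\partial U_0$. The field $\hat f_0$ equal to $\tilde f$ on each copy of $Q'$ is then a well-defined $C^r$ vector field on $\widehat Q$ (because $\tilde f$ vanishes near $\partial U_0$) for which $\widehat N$ is invariant — but $\widehat N$ is not yet normally attracting, since $\hat f_0$ vanishes on the two copies of $\overline{U_2}$.

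\emph{(3) Repair normal hyperbolicity.} This is the crux. Add a $C^r$ correction $\widehat f\coloneqq\hat f_0+w$ with $w$ supported in the two copies of $U_3$ and tangent to $\widehat N$ along $\widehat N$. Working in a tubular neighborhood of $\widehat N$ modeled on its normal bundle — extending $E^s$ across the collar and, if that bundle is nontrivial, passing to its sum with an inverse bundle to reduce to the trivial case — with coordinates $(b,v)$, $b\in\widehat N$ and $v$ normal, take $w$ of the form $w(b,v)=\bigl(w_{\mathrm{base}}(b),\,-\sigma(b)v\bigr)$, where $\sigma\ge 0$ is $C^r$, equals the original normal contraction rate off $U_3$, is large on the copies of $\overline{U_2}$, interpolates smoothly in between, and $w_{\mathrm{base}}$ restores a tangential flow on $\widehat N$ near $\overline{U_2}$. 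A cone-field / graph-transform argument then shows $\widehat N$ is an $r$-NAIM for $\widehat f$. The delicate point is to choose the interpolation on $U_3\setminus\overline{U_2}$ so that $\widehat f$ is $C^r$, $\widehat N$ remains invariant, no new recurrence is created near $\widehat N$, and the resulting hyperbolicity exponents stay within an arbitrarily small margin of the originals; one arranges this by making the transition shells $U_{i+1}\setminus U_i$ so thin that the flow crosses each in arbitrarily short time, and by taking the reinstated contraction constant to be a slight perturbation of the old one.

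\emph{(4) Verification.} Items 1 and 2 are then immediate, with $U_3$ — hence the ``arbitrarily small neighborhood of $\partial U_0$'' — as small as desired. For item 3, the hypothesis $U_0\cap\Wsl(M)=\varnothing$ together with definition~\eqref{eq:global_stable_manifold_def} — a point of $\Ws(M)$ has forward orbit eventually lying on a local stable leaf over $\Phi^t(M)\subset M$, while \eqref{eq:local_inv} and the forward invariance of the $N\setminus U_i$ keep such orbits out of $U_0$ — gives $\Ws(M)\cap U_0=\varnothing$; since $\widehat f=f$ off the copies of $U_3$ and $U_3$ may be shrunk into the region that $\Ws(M)$ avoids, $\Phi^t$ and $\widehat\Phi^t$ agree along $\Ws(M)$ for all $t$, so the two global stable foliations coincide under the inclusion. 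Finally, the estimates \eqref{eq:app-center-bunching} and \eqref{eq:app-sakamoto_rates} are governed by $\D\Phi^t$ along orbits in $TM$ and $E^s$; since $T\widehat N$ and $\widehat E^s$ restrict to $TN\supset TM$ and $E^s$ on the relevant copy of $Q\setminus U_3$ (and $N$ may be taken as a slight backward extension of $M$, preserving the exponents), and passage through the thin shells contributes only a bounded multiplicative factor (absorbed into $K$) and an arbitrarily small change of exponent, the estimates persist with $\hat\alpha,\hat\delta,\hat\beta$ arbitrarily close to $\alpha,\delta,\beta$; in the center-bunching case only the single exponent $\hat\alpha<0$ must be tracked. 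The main obstacle is Stage (3): reinstating a genuine $r$-NAIM $\widehat N$ with the prescribed differentiability and near-prescribed spectral data, after trivializing the field near the gluing locus, and doing so without disturbing the forward dynamics — hence the global stable foliation — away from the collars.
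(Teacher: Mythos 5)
Your Stages (1), (2), and (4) track the paper's argument closely, and your verification of item 3 (uniqueness of the local invariant foliation near $M$ plus agreement of the flows along $\Ws(M)\subset Q\setminus U_3$) is essentially the paper's argument. The gap is in Stage (3).

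First, you propose to work in a tubular neighborhood of $\widehat N$ itself, but $\widehat N$ is only $C^r$, so its normal bundle and tubular-neighborhood map are only $C^{r-1}$; a correction $w$ built in those coordinates would generically be $C^{r-1}$, breaking the claimed $C^r$ regularity of $\hat f$. The paper avoids this by choosing a $C^\infty$ manifold $X$ that is $C^1$-close to $\widehat N$, taking a $C^\infty$ tubular neighborhood $\varphi\colon E'\to\widehat Q$ of $X$, writing $\varphi^{-1}(\widehat N)$ as the image of a $C^r$ section $h\colon X\to E'$, and defining the correction to be $-\rho\,\chi(v_x)\,(v_x-h(x))$ in the $C^\infty$ fiber coordinates of $E'$; the only $C^r$ (non-$C^\infty$) ingredient is $h$, so $\hat f$ is $C^r$. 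Second, you introduce a base component $w_{\mathrm{base}}$ meant to ``restore a tangential flow'' on $\widehat N$ near $\overline{U_2}$, but no such term appears in the paper's construction, and none is needed: the modified field still vanishes identically on the two copies of $\overline{U_2}\cap\widehat N$, and those rest points do no harm. Third, and most importantly, you gesture at a cone-field/graph-transform argument where the paper instead invokes Fenichel's Uniformity Lemma, which reduces the verification that $\widehat N$ is an $r$-NAIM to a \emph{pointwise} estimate: for each $n\in\widehat N$ one only needs constants $C_n,a_n$ (depending on $n$) giving the normal-contraction/domination inequality, with no uniformity required. This is then checked by cases: (a) $n\in\widehat N\setminus V_3$, where $\hat f=f$ and $\widehat N\setminus V_3$ is forward invariant, so the original rates apply; (b) $\hat f_0(n)\neq 0$, where the orbit enters $\widehat N\setminus V_3$ in finite time, contributing only a bounded factor; (c) $\hat f_0(n)=0$, where $n$ is a rest point and the fiber term $-\rho(v_x-h(x))$ makes $n$ exponentially stable for the normal linear flow. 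Your plan of ``making the shells $U_{i+1}\setminus U_i$ so thin the flow crosses in arbitrarily short time'' is neither needed nor the right mechanism here: any finite transit time suffices because it only enters the constant $C_n$, and in any case the rescaled field $\tilde f$ slows to zero approaching $\overline{U_2}$, so transit times are not uniformly short. Without the Uniformity Lemma (or an explicit substitute) your step is not a proof. The same pointwise argument, applied to the linear flow on $\widehat E^s_1$ with the appropriate version of the Uniformity Lemma \cite[Lem.~16]{fenichel1974asymptotic}, is what carries the center-bunching and \eqref{eq:app-sakamoto_rates} estimates over, with the choice $\rho=(\alpha+\beta)/2$ in the Sakamoto case giving the ``arbitrarily close'' exponents.

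Minor additional points: the collar used to give $\widehat Q$ its smooth structure must be chosen so that it \emph{restricts} to a collar of $\partial\widetilde N$ in $\widetilde N$ (otherwise $\widehat N$ need not be a submanifold of $\widehat Q$); you should state this explicitly. And it is cleaner to establish $\Ws(M)\cap U_3=\varnothing$ directly, as the paper does (choosing $U_3\subset\Phi^{-\epsilon}(\Wsl(N))$ and using that $\Wsl(M)\cap U_3=\varnothing$), rather than starting from $\Ws(M)\cap U_0=\varnothing$ and then claiming $U_3$ may be shrunk into the complement of $\Ws(M)$, since the region $\Ws(M)$ avoids is exactly what needs to be produced.
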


\begin{Rem}
	It is not an additional hypothesis to require the existence of the manifold $N$ in Proposition~\ref{prop:wormhole}.
	This is because given any compact inflowing NAIM $M$, then for any sufficiently small $\epsilon > 0$, $N\coloneqq \Phi^{-\epsilon}(M)$ will be a compact inflowing NAIM containing $M$. We mention $N$ explicitly only to highlight the fact that \emph{any} compact inflowing NAIM $N$ containing $M$ in its manifold interior will do.
\end{Rem}

\begin{proof}
     Let $\epsilon>0$ be any small positive number, and let $U_0,U_1,U_2,U_3$ be arbitrarily small tubular neighborhoods of $\partial N$, disjoint from $\Wsl(M)$, satisfying $ \partial N \subset U_0 \subset \bar{U}_0 \subset  U_1\subset \bar{U}_1 \subset U_2\subset \bar{U}_2  \subset U_3\subset \Phi^{-\epsilon}(\Wsl(N))$, and such that all $U_i$ have $C^\infty$ boundary $\partial U_i$.
     See Figure \ref{fig:wormhole-2d}.
     Since $N$ is inflowing, we may further construct the $U_i$ so that $f$ is strictly inward pointing at the boundary of each $N\setminus  U_i$, and so that all points in $ U_3$ leave $U_3$ in a uniformly finite time.

     It follows that $\Ws(M)\cap U_3 = \varnothing$.
     We have chosen $U_3$ to be disjoint from $\Wsl(M)$, so to see this, suppose that there exists $m\in M$ and $y \in (\Ws(m)\setminus \Wsl(M))\cap U_3$.
     Since $\Phi^{\epsilon}(U_3) \subset \Wsl(N)$ by construction, by continuity there exists $t_0 > 0$ such that $\Phi^{t_0}(y) \in \Wsl(N)\setminus \Wsl(M)$.
     Let $n \in N \setminus M$ be the unique point such that $\Phi^{t_0}(y) \in \Wsl(n)$.
     Since $y \in \Ws(m)$ and since $\Ws(M)$ is $\Phi^{t}$-invariant, it follows also that $\Phi^{t_0}(y) \in \Ws(\Phi^{t_0}(m))$.
     By uniqueness of the stable fibers, this implies that $n = \Phi^{t_0}(m)$.
     But $n\in N\setminus M$ and $\Phi^{t_0}(m) \in M$ by positive invariance of $M$, so we have obtained a contradiction.
     
     Rescaling $f$ with a smooth cutoff function supported in $Q \setminus U_2$ and identically equal to $1$ on $Q \setminus U_3$, we replace $f$ with a $C^r$ vector field $\tilde{f}$ which is equal to $f$ on $Q\setminus U_3$ and zero on $\bar{U}_2$. By continuity of $D f$, $\bar{U}_2$ consists entirely of nonhyperbolic critical points for $\tilde{f}$.
     
     We next approximate $N$ by a $C^r$ manifold $\widetilde{N}$ such that $\widetilde{N}$ coincides with $N$ on $Q\setminus U_2$ and such that $\widetilde{N}\cap U_1$ is a $C^\infty$ submanifold intersecting $\partial U_0$ transversely.
     This can be achieved by giving $N$ a $C^\infty$ differential structure and then approximating the inclusion $N \hookrightarrow Q$ relative to $N\setminus U_2$ in the $C^r$ topology, see \cite[Ch.~2]{hirsch1976differential} for approximation theory details.
     The fact that $\tilde{f}|_{\bar{U_2}} = 0$ implies that $\tilde{N}$ is invariant under $\tilde{f}$.
    	
    We define $Q'\coloneqq Q \setminus U_0$, a $C^\infty$ manifold with boundary $\partial Q' = \partial U_0$.
    Recall that a $C^\infty$ collar for $\partial Q'$ in $Q'$ is a $C^\infty$ embedding $h\colon \partial Q'\times [0,\infty) \to Q'$ such that $h(x,0)\equiv x$ \cite[p.~113]{hirsch1976differential}.
    We choose a $C^\infty$ collar $h$ for $\partial Q'$ which restricts to a collar of $\partial \widetilde{N}$ in $\widetilde{N}$, i.e., $h|_{\partial \widetilde{N}\times [0,\infty)}\to \widetilde{N}$ is a collar \cite[Thm~6.2]{hirsch1976differential}. 
    Now let $\widehat{Q}$ be the double of $Q'$, the topological space obtained by first forming the disjoint union of two copies of $Q'$, then identifying corresponding points in $\partial Q'$.
	We use the collar $h$ to henceforth endow $\widehat{Q}$, in the usual way, with a $C^\infty$ differential structure (see, e.g., \cite[p.~184]{hirsch1976differential} or \cite[p.~226]{lee2013smooth}), and we let $S$ denote the common image of $\partial Q'$ in $\widehat{Q}$.

	Let $\widehat{N}$ denote the image of $\widetilde{N}$ in $\widehat{Q}$.
	Since $h$ was chosen to restrict to a collar for $\partial \widetilde{N}$ in $\widetilde{N}$, it follows that $\widehat{N}$ is a $C^r$ submanifold of $\widehat{Q}$.
	Letting  $\hat{f}_0$ be the vector field on $\widehat{Q}$ which is equal to $\tilde{f}$ on each copy of $Q'$, it is immediate that $\hat{f}_0\in C^r$ since $\hat{f}_0$ is zero on a neighborhood of $\partial Q'$.
	Finally, using a partition of unity, we give $\widehat{Q}$ a $C^\infty$ Riemannian metric which coincides with the original metric on each copy of $Q'$,  except on an arbitrarily small neighborhood of $S$.

    Next, we modify $\hat{f}_0$ near $S$ to make $\widehat{N}$ normally attracting.
    Let $X$ be a $C^\infty$ manifold which is $C^1$-close to $\widehat{N}$.
    Let $\varphi \colon E' \to \widehat{Q}$ be a $C^\infty$ tubular neighborhood of $X$.
    I.e., $\pi'\colon E'\to X$ is a $C^\infty$ vector bundle and $\varphi$ is an open $C^\infty$ embedding with $\varphi|_{X}$ the inclusion map, identifying $X$ with the zero section of $E'$.
    If $X$ approximates $\widehat{N}$ sufficiently closely, then $\varphi^{-1}(\widehat{N})$ is the image of a $C^r$ section $h\colon X\to E'$.
    Let $V_2\subset \widehat{Q}$ denote the open set which is the image of the two copies of $U_2$ in $\widehat{Q}$, and define $V_3$ similarly.
    Let $\chi\colon E'\to [0,\infty)$ be a $C^\infty$ compactly supported bump function such that $\chi \equiv 1$ on $\bar{V_2}$ and $\supp \chi \subset V_3$.
    We define a $C^{r}$ vector field $\hat{f}$ on $\varphi(E')$ by 
    \begin{equation*}
    \hat{f}\circ \varphi(v_x)\coloneqq \D \varphi_{v_x}\left[ (\varphi^*\hat{f}_0)(v_x) -\rho \chi(v_x)( v_x-h(x))\right], 
    \end{equation*}
    where $\pi'(v_x)=x$,  $\varphi^*\hat{f}_0\coloneqq (\varphi^{-1})_*\hat{f}_0$, and $\rho\coloneqq (\alpha+\beta)/2$ if \eqref{eq:app-sakamoto_rates} holds and $\rho \coloneqq 1$ otherwise.
    Since $\chi$ is compactly supported, it follows that $\hat{f}(v_x)$ is equal to $f(v_x)$ for sufficiently large $\norm{v_x}$, hence we may extend $\hat{f}$ to a $C^{r}$ vector field on $\widehat{Q}$ (still denoted $\hat{f}$) by defining $\hat{f}$ to be equal to $f$ on $\widehat{Q}\setminus \varphi(E)$.
    Let $\widehat{\Phi}_1^t$ denote the flow of $\hat{f}$.
    
    Define a subbundle $E$ of $TQ|_{\widehat{N}}$ by $E \coloneqq \D \varphi (\Ver E'|_{h(X)})$, where $\Ver E'\coloneqq \ker \D \pi'\subset TE'$ is the vertical bundle.
    Since $h$ is a section of $E'$, it follows that $\T E'|_{h(X)} = \T h(X)\oplus\Ver E'|_{h(X)}$, and since $\varphi$ is a local diffeomorphism it follows that $\D\varphi$ preserves this splitting: $\T \widehat{Q}|_{\widehat{N}}= T\widehat{N}\oplus E$.
    Let $\Pi^E\colon T\widehat{Q}|_{\widehat{N}}\to E$ be the projection $T\widehat{N}\oplus E \to E$.
     We now argue that $\widehat{N}$ is an $r$-NAIM for $\hat{f}$; it suffices to show that $\widehat{N}$ is an $r$-NAIM for the linear flow $\Pi^{E}\circ \D\widehat{\Phi}_1^t|_{E}$ \cite[Prop.~1, Thm~6]{fenichel1971persistence}. 
     To do this, by the Uniformity Lemma \cite{fenichel1971persistence} it suffices to show that for each $n \in \widehat{N}$ there exist $C_n > 0$ and $a_n < 0$ such that for any $t \ge 0$ and $0 \le i \le r$, 
     \begin{equation}\label{eq:app_proj_naim_rate}
     \norm{\Pi^E\circ \D\widehat{\Phi}_1^t|_{E_n}} \le C_n e^{a_n t} \minnorm{\D\widehat{\Phi}_1^t|_{\T_n \widehat{N}}}^i.
     \end{equation}
     First note that $\hat{f}$ is equal to $f$ on $\widehat{Q}\setminus V_3$, $\widehat{N}\setminus V_3$ is positively invariant, and $N$ is an $r$-NAIM for $f$.
     It follows that for each $n \in \widehat{N}\setminus V_3$, we can find $a_n$ and $C_n$ such that \eqref{eq:app_proj_naim_rate} holds.
     Next, let $n\in \widehat{N}$ be any point with $\hat{f}_0(n)\neq 0$.
     Since $\widehat{\Phi}^t_1$ takes $n$ into $\widehat{N}\setminus V_3$ in finite time, in this case we can also find $a_n, C_n$ such that \eqref{eq:app_proj_naim_rate} holds.
     Finally, if $n \in \widehat{N}$ is any point with $\hat{f}_0(n)=0$, then $E_n$ is invariant under $\Pi^E\circ \D\widehat{\Phi}_1^t|_{E_n}$.
     The definition of $\hat{f}$ and $E$ imply that $n$ is an exponentially stable fixed point for the restriction of this flow to $E_n$, so we again find $a_n,C_n$ such that \eqref{eq:app_proj_naim_rate} holds.  
     Hence, by the Uniformity Lemma, $\widehat{N}$ is indeed an $r$-NAIM --- in particular, there exists a $\D\widehat{\Phi}_1^t$-invariant stable bundle $\widehat{E}^s_1$ over $\widehat{N}$.
     
     Now suppose additionally that either the $k$-center bunching conditions \eqref{eq:app-center-bunching} or \eqref{eq:app-sakamoto_rates} held for the original system on $Q$.
     Considering now the flow $\D\widehat{\Phi}_1^t|_{\widehat{E}^s_1}$ on $\widehat{E}^s_1$ and
     repeating the argument in the preceding paragraph --- using a different version of the Uniformity Lemma \cite[Lem.~16]{fenichel1974asymptotic} for the case of center bunching conditions --- shows that the corresponding condition still holds for $\hat{f}$ on $\widehat{Q}$.
     
     It remains only to show that the global stable foliation $\widehat{W}^s(M)$ for $\hat{f}$ agrees with $\Ws(M)$ when we identify $M$ and $\Ws(M)$ with either copy of their images in $\widehat{Q}$ --- for definiteness, let us fix one such copy of $M$ and $\Ws(M)$ in what follows (with the former copy a subset of the latter).
     To accomplish this, we first consider the local foliations $\widehat{W}^s_{\text{loc}}(M)$ and $\Wsl(M)$.
     Both local foliations are $\widehat{\Phi}^t$-invariant, the latter because $\hat{f}$ is equal to $f$ on $\Ws(M)$.
     But since $M$ is a compact and inflowing NAIM, the standard Hadamard graph transform \cite{fenichel1974asymptotic} shows that there exists a \emph{unique} local invariant foliation transverse to $M$. 
     More precisely, this means that there exists a sufficiently small neighborhood $J$ of $M$ such that $\forall m\in M: J\cap \Wsl(m) = J\cap \widehat{W}^s_{\text{loc}}(m)$.
     Now for any $m\in M$ and any $y \in \Ws(m)$, there exists $t_0>0$ such that $\Phi^{t_0}(y) \in J\cap \Wsl(\Phi^{t_0}(m)) = J\cap \widehat{W}^s_{\text{loc}}(\Phi^{t_0}(m))$.
     It follows that $y \in \Phi^{-t_0}(\widehat{W}^s_{\text{loc}}(\Phi^{t_0}(m)))\subset \widehat{W}^s(m)$ and therefore that $\Ws(m) \subset \widehat{W}^s(m)$.
     A symmetric argument shows that $\widehat{W}^s(m) \subset \Ws(m)$, and since $m\in M$ was arbitrary it follows that the leaves of $\Ws(M)$ and $\widehat{W}^s(M)$ coincide.
     This completes the proof.
     \end{proof}
     
\section{Fiber bundles}\label{app:fiber_bundles}     

In this appendix, we review the basic notions from the theory of fiber bundles that we use.
Our definition of $C^k$ bundles follows \cite[Def. 1.1.1]{neeb2010differential}.
Other useful references for the topological and $C^\infty$ cases include \cite{steenrod1951topology,husemoller1966fibre,bloch2015nonholonomic}, with a self-contained and brief introduction appearing in \cite[Ch. 2]{bloch2015nonholonomic}. 
We also mention \cite{lee2013smooth,hirsch1976differential,kobayashi1963foundationsV1} as containing nice introductions to vector bundles.

\begin{Def}[Fiber bundles]\label{def:fiber_disk_bundles}
	A $C^k$ fiber bundle, with $1 \leq k \leq \infty$, is a quadruple $(E,B,F,\pi)$ consisting of $C^k$ manifolds $E$, $B$, and $F$ and a $C^k$ map $\pi\colon E\to B$ with the following property of local triviality: each point $b\in B$ has an open neighborhood $U\subset B$ for which there exists a $C^k$ diffeomorphism
	\begin{equation*}
	\varphi_U\colon  \pi^{-1}(U) \to U\times F,
	\end{equation*}
	satisfying
	\begin{equation*}
	\text{pr}_1\circ \varphi_{U} = \pi,
	\end{equation*}
	where $\text{pr}_1\colon U\times F \to U$ is the projection onto the first factor.
	A $C^0$ fiber bundle is defined by replacing all $C^k$ manifolds and diffeomorphisms above with arbitrary topological spaces and homeomorphisms, respectively.
	Often we abuse terminology and simply refer to $E$ or to $\pi\colon E\to B$ as the fiber bundle when the other data is understood.
\end{Def}

\begin{Rem}
The following terminology is common.
$E$ is called the \concept{total space}, $B$ is called the \concept{base space}, $F$ is the \concept{model fiber} or \concept{fiber type}, and $\pi$ is called the \concept{bundle projection}.
Sets of the form $E_b\coloneqq \pi^{-1}(b)$ are called the \concept{fibers} of the bundle or of $\pi$.
The map $\varphi_U$ is called a \concept{local trivialization}.
$(E,B,F,\pi)$ is sometimes called an \concept{$F$-bundle} over $B$.	
\end{Rem}

\begin{Ex}[Disk bundles]
A \concept{$C^k$ disk bundle} is a $C^k$ fiber bundle $(E,B,F,\pi)$ with $F = \R^n$, for some $n \in \N$.	
\end{Ex}

\begin{Def}[Vector bundles]\label{def:vector_bundles}
	A (finite-dimensional) \concept{$C^k$ vector bundle} is a $C^k$ disk bundle $(E,B,F,\pi)$ with the following additional requirement.
	For any open sets $U,V\subset B$ with $U\cap V \neq \varnothing$, the \concept{transition map}
	\begin{equation*}
	\varphi_{U,V}\coloneqq \varphi_{U}^{-1} \varphi_V|_{(U\cap V)\times F}\colon(U\cap V)\times F \to (U\cap V)\times F 
	\end{equation*} 
	is given by
	\begin{equation*}
	\varphi_{U,V}(b,v) = (b, A(b)v),
	\end{equation*}
	where $A\colon B \to \text{GL}(n,\R)$ is a $C^k$ invertible matrix-valued map.
\end{Def}

\begin{Ex}[The tangent bundle]
	Let $Q$ be a smooth ($C^\infty$) $n$-manifold.
	Then its tangent bundle $\pi\colon TQ\to Q$ is a smooth vector bundle.
	To see this, let $(U,\psi_U)$ be a smooth chart for $Q$.
	Identifying $\T \R^n \cong \R^n \times \R^n$, then $\varphi_U\coloneqq (\pi,\text{pr}_2 \circ \D \psi_U)\colon TQ|_U \to U \times \R^n$ satisfies
    $\text{pr}_1\circ \varphi_{U} = \pi$, where $TQ|_U\coloneqq \pi^{-1}(U)$ and $\text{pr}_{i}$ is projection onto the $i$-th factor.
	If $(V,\psi_V)$ is another chart, then 
	\begin{equation*}
	\varphi_{U,V}\left(b,\sum_{i}v^k e_k\right) = \left(b,\sum_{i} \frac{\partial \psi^i_{U,V}}{\partial x^j}v^j e_i\right),
	\end{equation*}
where $(e_k)$ is the standard basis of $\R^n$ and $\psi_{U,V}\coloneqq \psi_U^{-1} \circ \psi_V|_{U\cap V}$.
Hence we may take the Jacobian of $\psi_{U,V}$ to play the role of $A$ from Definition \ref{def:vector_bundles}, so $TQ$ is indeed a smooth vector bundle.
\end{Ex}

\begin{Def}\label{def:fiber_bundle_isomorphisms}
	A $C^k$ isomorphism $\psi\colon E \to E'$ of $C^k$ fiber bundles $(E,B,F,\pi)$ and $(E',B',F',\pi')$ covering a map $\rho\colon B\to B'$ is a $C^k$ fiber-preserving diffeomorphism $\psi$ (homeomorphism if $k = 0$): each fiber $\pi^{-1}(b)$ is bijectively mapped to the fiber $\pi'^{-1}(\rho(b))$.
	This is equivalent to requiring that $\psi$ is a $C^k$ diffeomorphism and
	\begin{equation*}
	\pi'\circ \psi = \pi \circ \rho.
	\end{equation*}
A $C^k$ fiber bundle $(E,B,F,\pi)$ is \concept{trivial} as a $C^k$ bundle if it is $C^k$ isomorphic to the fiber bundle $(B\times F, B, F, \text{pr}_2)$, where $\text{pr}_2$ is projection onto the second factor.
\end{Def}

\begin{Ex}\label{ex:how_to_show_its_a_bundle}
	Let $\pi\colon E\to B$ be any $C^k$ map, and assume there exists a fiber bundle $(E',B,F,\pi')$ and a $C^k$ diffeomorphism $\psi\colon E\to E'$ with the property that
	\begin{equation*}
	\pi'\circ \psi = \id_{B}.
	\end{equation*}
	Then $(E,B,F,\pi)$ is also a $C^k$ fiber bundle.
	This is because if $\varphi_U\colon \pi'^{-1}(U)\to U\times F$ is a $C^k$ local trivialization of $\pi'\colon E'\to B$, then $\varphi_U\circ \psi\colon \pi^{-1}(U) \to U\times F$ is a $C^k$ local trivialization of $\pi\colon E \to B$.
	In particular, if $(E',B,F,\pi')$ is a disk bundle, then so is $(E,B,F,\pi)$. (recall that all vector bundles are disk bundles --- we use this in the proof of Theorem \ref{th:fiber_bundle_theorem}). 
\end{Ex}

\begin{Def}\label{def:section_of_bundle}
A map $X\colon B \to E$ is a $C^k$ \concept{section} of a fiber bundle $(E,B,F,\pi)$ if $X \in C^k$ and $\pi\circ X = \id_B$.	
\end{Def}

\begin{Ex}
	A $C^k$ section $X$ of the tangent bundle $\T Q$ is the same thing as a $C^k$ vector field $X$ on $Q$. 
	The requirement $\pi \circ X = \id_Q$ simply means that $X(q)$ is a tangent vector based at $q\in Q$.
\end{Ex}
We next give definitions of vector subbundles and the Whitney sum of vector bundles. 
These concepts are fundamental to the very definition of normal hyperbolicity, see \S \ref{sec:preliminary_constructions}.

\begin{Def}[Vector subbundle]
	A vector bundle $(E',B,F,\pi)$ is a $C^k$ subbundle of a $C^k$ vector bundle $(E,B,F,\pi)$ if every $b\in B$ has a neighborhood $U$ such that there exist pointwise linearly independent $C^k$ sections $(X_1,\ldots,X_d)$ (called a \concept{local frame}) which span $E'_{c}$ for all $c \in U$. 
\end{Def}

\begin{Def}[Whitney sum]\label{def:Whitney_sum}
	The Whitney sum of two $C^k$ vector bundles $(E,B,\R^n,\pi)$ and $(E',B,\R^m,\pi')$ is the $C^k$ vector bundle $(E \oplus E',B,\mathbb{R}^{n+m},\tilde{\pi})$ whose fiber $(E\oplus E')_{b}$ is given by $E_b \oplus E'_{b}$.
	The $C^k$ vector bundle structure is determined as follows. 
	If $\varphi_U$ and $\varphi_U'$ are local trivializations for $E$ and $E'$ over $U$, then $(\varphi_U, \text{pr}_2 \circ \varphi_U')$ is a local trivialization for $E\oplus E'$ over $U$.
\end{Def}

Following \cite[p. 116]{kobayashi1963foundationsV1}, we now give the definition of a \concept{fiber metric} on a vector bundle, which generalizes the notion of a Riemannian metric on the tangent bundle of a manifold.

\begin{Def}[Fiber metric]\label{def:fiber_metric}
	A $C^k$ fiber metric on a $C^k$ vector bundle $\pi\colon E \to M$ is an assignment, to each $m \in M$, of an inner product $g_m$ on the fiber $\pi^{-1}(m)$, such that for any $C^k$ sections $X,Y\colon M\to E$, the map $m\mapsto g_m(X(m),Y(m))$ is $C^k$.
\end{Def}
Using a partition of unity, it easy to show that fiber metrics always exist on any vector bundle over a paracompact base.
A fiber metric defines a norm on each fiber $\pi^{-1}(m)$ via $\|X\|_m\coloneqq \sqrt{g_m(X,X)}$ for $X \in E_m$.
We will often suppress the subscript $m$ and simply write $\|X\|$.

\bibliographystyle{amsalpha}
\bibliography{references}

\providecommand{\bysame}{\leavevmode\hbox to3em{\hrulefill}\thinspace}
\providecommand{\MR}{\relax\ifhmode\unskip\space\fi MR }
\providecommand{\MRhref}[2]{%
  \href{http://www.ams.org/mathscinet-getitem?mr=#1}{#2}
}
\providecommand{\href}[2]{#2}
\begin{thebibliography}{PSW97}

\bibitem[BK94]{smoothInvariant}
I~U Bronstein and A~Y Kopanski\u\i, \emph{Smooth invariant manifolds and normal
  forms}, World Scientific Series on Nonlinear Science. Series A: Monographs
  and Treatises, vol.~7, World Scientific Publishing Co., Inc., River Edge, NJ,
  1994. \MR{1337026}

\bibitem[Blo15]{bloch2015nonholonomic}
A~M Bloch, \emph{Nonholonomic mechanics and control}, 2 ed., vol.~24,
  Springer-Verlag, 2015.

\bibitem[Bru96]{brunovsky1996tracking}
P~Brunovsk{\`y}, \emph{Tracking invariant manifolds without differential
  forms}, Acta Math. Univ. Comenianae \textbf{65} (1996), no.~1, 23--32.

\bibitem[EJ16]{eldering2016role}
J~Eldering and H~O Jacobs, \emph{The role of symmetry and dissipation in
  biolocomotion}, SIAM Journal on Applied Dynamical Systems \textbf{15} (2016),
  no.~1, 24--59.

\bibitem[Eld13]{eldering2013normally}
J~Eldering, \emph{Normally hyperbolic invariant manifolds: the noncompact
  case}, Atlantis Press, 2013.

\bibitem[Eld16]{eldering2016realizing}
\bysame, \emph{Realizing nonholonomic dynamics as limit of friction forces},
  Regular and Chaotic Dynamics \textbf{21} (2016), no.~4, 390--409.

\bibitem[Fed68]{federer1968geometric}
H~Federer, \emph{Geometric measure theory}, Springer-Verlag, 1968.

\bibitem[Fen71]{fenichel1971persistence}
N~Fenichel, \emph{Persistence and smoothness of invariant manifolds for flows},
  Indiana Univ. Math. J. \textbf{21} (1971), 193--226. \MR{0287106}

\bibitem[Fen74]{fenichel1974asymptotic}
\bysame, \emph{Asymptotic stability with rate conditions}, Indiana University
  Mathematics Journal \textbf{23} (1974), no.~12, 1109--1137.

\bibitem[Fen77]{fenichel1977asymptotic}
\bysame, \emph{Asymptotic stability with rate conditions, ii}, Indiana
  University Mathematics Journal \textbf{26} (1977), no.~1, 81--93.

\bibitem[Fen79]{fenichel1979geometric}
\bysame, \emph{Geometric singular perturbation theory for ordinary differential
  equations}, Journal of Differential Equations \textbf{31} (1979), no.~1,
  53--98.

\bibitem[GH83]{guckenheimer1983nonlinear}
J~Guckenheimer and P~Holmes, \emph{Nonlinear oscillations, dynamical systems,
  and bifurcations of vector fields}, Applied Mathematical Sciences, vol.~42,
  Springer-Verlag, New York, 1983. \MR{709768}

\bibitem[GP10]{guillemin1974differential}
V~Guillemin and A~Pollack, \emph{Differential topology}, AMS Chelsea
  Publishing, Providence, RI, 2010, Reprint of the 1974 original. \MR{2680546}

\bibitem[Hir94]{hirsch1976differential}
M~W Hirsch, \emph{Differential topology}, Graduate Texts in Mathematics,
  vol.~33, Springer-Verlag, New York, 1994, Corrected reprint of the 1976
  original. \MR{1336822}

\bibitem[HPS77]{hirsch1977}
M~W Hirsch, C~C Pugh, and M~Shub, \emph{Invariant manifolds}, Lecture Notes in
  Mathematics, Vol. 583, Springer-Verlag, Berlin-New York, 1977. \MR{0501173}

\bibitem[Hus66]{husemoller1966fibre}
D~Husemoller, \emph{Fibre bundles}, vol.~5, Springer, 1966.

\bibitem[JK94]{jones1994tracking}
C~K R~T Jones and N~Kopell, \emph{Tracking invariant manifolds with
  differential forms in singularly perturbed systems}, Journal of Differential
  Equations \textbf{108} (1994), no.~1, 64--88.

\bibitem[JKK96]{jones1996tracking_exp}
C~K R~T Jones, T~J Kaper, and N~Kopell, \emph{Tracking invariant manifolds up
  to exponentially small errors}, SIAM Journal on Mathematical Analysis
  \textbf{27} (1996), no.~2, 558--577.

\bibitem[Jon95]{jones1995geometric}
C~K R~T Jones, \emph{Geometric singular perturbation theory}, Dynamical
  systems, Springer, 1995, pp.~44--118.

\bibitem[Jos00]{josic2000synchronization}
K~Josi\'c, \emph{Synchronization of chaotic systems and invariant manifolds},
  Nonlinearity \textbf{13} (2000), no.~4, 1321.

\bibitem[JT09]{jones2009generalized}
C~K R~T Jones and S-K Tin, \emph{Generalized exchange lemmas and orbits
  heteroclinic to invariant manifolds}, Discrete and Continuous Dynamical
  Systems-Series S \textbf{2} (2009), no.~4, 967--1023.

\bibitem[Kap99]{kaper1999systems}
T~J Kaper, \emph{An introduction to geometric methods and dynamical systems
  theory for singular perturbation problems}, Analyzing Multiscale Phenomena
  Using Singular Perturbation Methods: American Mathematical Society Short
  Course, January 5-6, 1998, Baltimore, Maryland \textbf{56} (1999), no.~56,
  85.

\bibitem[KBS14]{kristiansen2014_SOF_method}
K~U Kristiansen, M~Br{\o}ns, and J~Starke, \emph{An iterative method for the
  approximation of fibers in slow-fast systems}, SIAM Journal on Applied
  Dynamical Systems \textbf{13} (2014), no.~2, 861--900.

\bibitem[KJ01]{kaper2001primer}
T~J Kaper and C~K R~T Jones, \emph{A primer on the exchange lemma for fast-slow
  systems}, Multiple-time-scale dynamical systems, Springer, 2001, pp.~65--87.

\bibitem[KN63]{kobayashi1963foundationsV1}
S~Kobayashi and K~Nomizu, \emph{Foundations of differential geometry. {V}ol
  {I}}, Interscience Publishers, a division of John Wiley \& Sons, New
  York-London, 1963. \MR{0152974}

\bibitem[KOS76]{kokotovic1976singular}
P~V Kokotovic, R~E O'malley, and P~Sannuti, \emph{Singular perturbations and
  order reduction in control theory--an overview}, Automatica \textbf{12}
  (1976), no.~2, 123--132.

\bibitem[Kue15]{kuehn2015multiple}
Christian Kuehn, \emph{Multiple time scale dynamics}, Springer, 2015.

\bibitem[Lam93]{lam1993using}
S~H Lam, \emph{Using csp to understand complex chemical kinetics}, Combustion
  Science and Technology \textbf{89} (1993), no.~5-6, 375--404.

\bibitem[Lee13]{lee2013smooth}
J~M Lee, \emph{Introduction to smooth manifolds}, 2 ed., Springer-Verlag, 2013.

\bibitem[LG89]{lam1989understanding}
S~H Lam and D~A Goussis, \emph{Understanding complex chemical kinetics with
  computational singular perturbation}, Symposium (International) on
  Combustion, vol.~22, Elsevier, 1989, pp.~931--941.

\bibitem[LG94]{lam1994csp}
\bysame, \emph{The csp method for simplifying kinetics}, International Journal
  of Chemical Kinetics \textbf{26} (1994), no.~4, 461--486.

\bibitem[Liu06]{liu2006geometric}
W~Liu, \emph{Geometric singular perturbations for multiple turning points:
  invariant manifolds and exchange lemmas}, Journal of Dynamics and
  Differential Equations \textbf{18} (2006), no.~3, 667--691.

\bibitem[LM13]{lan2013linearization}
Y~Lan and I~Mezi\'c, \emph{Linearization in the large of nonlinear systems and
  {K}oopman operator spectrum}, Phys. D \textbf{242} (2013), 42--53.
  \MR{3001394}

\bibitem[Mei02]{meigniez2002submersions}
G~Meigniez, \emph{Submersions, fibrations and bundles}, Transactions of the
  American Mathematical Society \textbf{354} (2002), no.~9, 3771--3787.

\bibitem[Mez17]{mezic_book}
I~Mezi\'c, \emph{Spectral koopman operator methods in dynamical systems},
  Unpublished textbook, 2017.

\bibitem[Nee10]{neeb2010differential}
K-H Neeb, \emph{Differential topology of fiber bundles}, 2010.

\bibitem[PS70]{pugh1970linearization}
C~C Pugh and M~Shub, \emph{Linearization of normally hyperbolic diffeomorphisms
  and flows}, Invent. Math. \textbf{10} (1970), 187--198. \MR{0283825}

\bibitem[PSW97]{pugh1997holder}
C~C Pugh, M~Shub, and A~Wilkinson, \emph{H\"older foliations}, Duke Math. J.
  \textbf{86} (1997), no.~3, 517--546. \MR{MR1432307 (97m:58155)}

\bibitem[PT77]{palis1977topological}
J~Palis and F~Takens, \emph{Topological equivalence of normally hyperbolic
  dynamical systems}, Topology \textbf{16} (1977), no.~4, 335--345.
  \MR{0474409}

\bibitem[Rob71]{robinson1971differentiable}
C~Robinson, \emph{Differentiable conjugacy near compact invariant manifolds},
  Bol. Soc. Brasil. Mat \textbf{2} (1971), no.~1, 33--44.

\bibitem[Rob89]{roberts1989appropriate}
A~J Roberts, \emph{Appropriate initial conditions for asymptotic descriptions
  of the long term evolution of dynamical systems}, The ANZIAM Journal
  \textbf{31} (1989), no.~1, 48--75.

\bibitem[Rob00]{roberts2000computer}
\bysame, \emph{Computer algebra derives correct initial conditions for
  low-dimensional dynamical models}, Computer physics communications
  \textbf{126} (2000), no.~3, 187--206.

\bibitem[Sak90]{sakamoto1990invariant}
K~Sakamoto, \emph{Invariant manifolds in singular perturbation problems for
  ordinary differential equations}, Proceedings of the Royal Society of
  Edinburgh Section A: Mathematics \textbf{116} (1990), no.~1-2, 45--78.

\bibitem[Sak94]{sakamoto1994smooth}
\bysame, \emph{Smooth linearization of vector fields near invariant manifolds},
  Hiroshima Mathematical Journal \textbf{24} (1994), no.~2, 331--355.

\bibitem[Sch08]{schecter2008exchange}
S~Schecter, \emph{Exchange lemmas 2: General exchange lemma}, Journal of
  Differential Equations \textbf{245} (2008), no.~2, 411--441.

\bibitem[Sel83]{sell1983vector}
G~R Sell, \emph{Vector fields in the vicinity of a compact invariant manifold},
  Equadiff 82, Springer, 1983, pp.~568--574.

\bibitem[Sel84]{sell1983linearization}
\bysame, \emph{Linearization and global dynamics}, Proceedings of the
  {I}nternational {C}ongress of {M}athematicians, {V}ol.\ 1, 2 ({W}arsaw,
  1983), PWN, Warsaw, 1984, pp.~1283--1296. \MR{804778}

\bibitem[Ste51]{steenrod1951topology}
N~E Steenrod, \emph{The topology of fibre bundles}, vol.~14, Princeton
  University Press, 1951.

\bibitem[SW99]{smith1999perturbation}
H~L Smith and P~Waltman, \emph{Perturbation of a globally stable steady state},
  Proceedings of the American Mathematical Society \textbf{127} (1999), no.~2,
  447--453.

\bibitem[Tak71]{takens1971partiallyhyp}
F~Takens, \emph{Partially hyperbolic fixed points}, Topology \textbf{10}
  (1971), 133--147. \MR{0307279}

\bibitem[Wig94]{normallyHypMan}
S~Wiggins, \emph{Normally hyperbolic invariant manifolds in dynamical systems},
  Applied Mathematical Sciences, vol. 105, Springer-Verlag, New York, 1994,
  With the assistance of Gy\"orgy Haller and Igor Mezi\'c. \MR{1278264}

\bibitem[Wil67]{wilson1967structure}
F~W Wilson, Jr., \emph{The structure of the level surfaces of a {L}yapunov
  function}, J. Differential Equations \textbf{3} (1967), 323--329.
  \MR{0231409}

\bibitem[Wil69]{wilson1969smooth}
\bysame, \emph{Smoothing derivatives of functions and applications}, Trans.
  Amer. Math. Soc. \textbf{139} (1969), 413--428. \MR{0251747}

\bibitem[Yi93a]{yi1993generalized}
Y~F Yi, \emph{A generalized integral manifold theorem}, Journal of differential
  equations \textbf{102} (1993), no.~1, 153--187.

\bibitem[Yi93b]{yi1993stability}
\bysame, \emph{Stability of integral manifold and orbital attraction of
  quasi-periodic motion}, Journal of differential equations \textbf{103}
  (1993), no.~2, 278--322.

\bibitem[ZKK04]{zagaris2004fast}
A~Zagaris, H~G Kaper, and T~J Kaper, \emph{Fast and slow dynamics for the
  computational singular perturbation method}, Multiscale Model. Simul.
  \textbf{2} (2004), no.~4, 613--638. \MR{2113172}

\end{thebibliography}

\end{document}